\documentclass[a4paper,11pt]{article}
\pagestyle{plain}
\setlength{\oddsidemargin}{12pt}
\setlength{\evensidemargin}{12pt}
\setlength{\topmargin}{0pt}
\setlength{\textwidth}{15cm}
\setlength{\textheight}{21.5cm}
\setlength{\parindent}{0.5cm}
\setlength{\parskip}{1ex plus 0.2ex minus0.2ex}

\synctex=1
\usepackage{pdfsync}

\usepackage[plainpages=false]{hyperref}
\usepackage{amsfonts,amsmath,amssymb,amsthm,enumerate}
\usepackage{latexsym,lscape,rawfonts,mathrsfs,mathtools}
\usepackage{enumitem,pifont}
\usepackage{cases}
\usepackage{authblk}


\usepackage[all]{xy}
\usepackage{eufrak}
\usepackage{makeidx}
\usepackage{graphicx,psfrag}
\usepackage{pstool}
\usepackage{tikz-cd}
\usepackage{float}
\usepackage{array}
\usepackage{diagbox}

\usepackage{setspace}

\usepackage[titletoc,title]{appendix}


\usepackage{txfonts}




\newcommand{\CC}{\ensuremath{\mathcal{C}}}

\newcommand{\nnn}{\ensuremath{\mathscr{N}}}
\newcommand{\ccc}{\ensuremath{\mathscr{C}}}

\newcommand{\MM}{\ensuremath{\mathcal{M}}}

\newcommand{\N}{\ensuremath{\mathbb{N}}}

\newcommand{\R}{\ensuremath{\mathbb{R}}}

\newcommand{\VV}{\ensuremath{\mathcal{V}}}

\newcommand{\bballY}{there exists point in $\pball(Y_b,4r_b) \bigcap \MM$ which is $(j', \eta, r_b)$-cylindrical for some $j'>j$}

\newcommand{\MS}{\ensuremath{\mathcal{S}}}

\newcommand{\ba}{\begin{align*}}
\newcommand{\ea}{\end{align*}}
\newcommand{\na}{\nabla}

\newcommand{\lc}{\left(}
\newcommand{\rc}{\right)}

\newcommand{\ep}{\epsilon}

\newcommand{\tf}{\mathfrak{t}}
\newcommand{\ef}{\mathfrak{e}}

\newcommand{\pball}{\mathfrak{B}}
\newcommand{\spacetime}{\mathbb{R}^{n+1,1}}
\newcommand{\espace}{\mathbb{R}^{n+1}}


\newcommand{\vol}{\mathrm{Vol}}

\makeatletter
\newcommand*\owedge{\mathpalette\@owedge\relax}
\newcommand*\@owedge[1]{%
\mathbin{%
\ooalign{%
$#1\m@th\bigcirc$\cr
\hidewidth$#1\m@th\wedge$\hidewidth\cr
}%
}%
}
\makeatother

\makeatletter
\def\ExtendSymbol#1#2#3#4#5{\ext@arrow 0099{\arrowfill@#1#2#3}{#4}{#5}}

\makeatother

\makeatletter
\def\ExtendSymbol#1#2#3#4#5{\ext@arrow 0099{\arrowfill@#1#2#3}{#4}{#5}}

\makeatother

\def\XXint#1#2#3{{\setbox0=\hbox{$#1{#2#3}{\int}$ }
\vcenter{\hbox{$#2#3$ }}\kern-.55\wd0}}

\numberwithin{equation}{section}

\newtheorem{thm}{Theorem}[section]
\newtheorem{cor}[thm]{Corollary}
\newtheorem{prop}[thm]{Proposition}
\newtheorem{lem}[thm]{Lemma}

\newtheorem{defn}[thm]{Definition}
\newtheorem{claim}[thm]{Claim}

\newtheorem{exmp}[thm]{Example}

\setlength{\oddsidemargin}{0.25in}
\setlength{\evensidemargin}{0.25in} 
\setlength{\textwidth}{6in}
\setlength{\topmargin}{-0in} 
\setlength{\textheight}{8.5in}

\title{Volume estimates for the singular sets of mean curvature flows}
\author{Hanbing Fang \quad and \quad Yu Li} 
\date{\today}

\begin{document}
\maketitle

\begin{abstract}
In this paper, we establish uniform and sharp volume estimates for the singular set and the quantitative singular strata of mean curvature flows starting from a smooth, closed, mean-convex hypersurface in $\R^{n+1}$.
\end{abstract}


\section{Introduction}

A one-parameter family $\{M_t\}_{t \in I}$ of hypersurfaces in $\R^{n+1}$ evolves by mean curvature flow if
\begin{equation*}
(\partial_t x)^{\perp}=\mathbf H(x),
\end{equation*}
where $\mathbf{H}=-H \mathbf{n}$ is the mean curvature vector, $\mathbf n$ is the outward unit normal, $v^{\perp}$ denotes the normal component of a vector $v$, and the mean curvature $H$ is given by
\begin{equation*}
H=\mathrm{div}(\mathbf{n}).
\end{equation*}

Mean curvature flow is the gradient flow of the area functional. The simplest example of a non-static mean curvature flow is the family of shrinking spheres $\{S^n(\sqrt{2n|t|})\}_{t<0}$. In this case, the flow is smooth except at the origin at time $t=0$, where it becomes extinct. More generally, Huisken \cite{huisken1984flow} proved that if the initial hypersurface $M_0$ is closed and convex, then $M_t$ becomes increasingly spherical as it contracts to a point.

Recall that a mean curvature flow $\{M_t\}_{t \ge 0}$ from a closed hypersurface $M_0 \subset \R^{n+1}$ must develop singularities in finite time. In studying mean curvature flow, a useful weak solution concept is the level set flow (see \cite{evans1991motion} and \cite{chen1991uniqueness}): Suppose $f$ is a function satisfying the equation
\begin{equation} \label{eq:level}
\partial_t f=|\na f|\,\mathrm{div}\lc \frac{\na f}{|\na f|} \rc,
\end{equation}
then the level sets $M_t=\{x \in \R^{n+1} \mid f(x, t)=0\}$ are hypersurfaces evolving under mean curvature flow. In general, Evans-Spruck \cite{evans1991motion} and Chen-Giga-Goto \cite{chen1991uniqueness} established the existence and uniqueness of viscosity solutions to \eqref{eq:level}, providing a weak formulation of mean curvature flow.

A fundamental question is understanding the formation of singularities for the level set flow before its extinction. A special case of interest is the level set flow from a smooth, closed, mean-convex hypersurface, meaning that the mean curvature of the initial hypersurface is nonnegative. The mean-convexity assumption leads to many significant results, as shown in \cite{sinestrari1999convexity} \cite{huisken1999mean} \cite{brendle2016mean} \cite{white2000size} \cite{white2003nature} \cite{white2015subsequent} \cite{haslhofer2017mean1} \cite{haslhofer2017mean2} \cite{colding2016singular} \cite{colding2016differentiability}, among others. In this context, one can express the level set function $f(x,t)=u(x)-t$ (see \cite[Section 7.3]{evans1991motion}), so that \eqref{eq:level} becomes
\begin{align} \label{eq:level2}
-1=|\nabla u|\ \mathrm{div}\lc \frac{\nabla u}{|\nabla u|} \rc,
\end{align}
where $u$ is referred to as the \emph{arrival time}. The level set $M_t=\{u(x)=t\}$ encloses a mean-convex domain. The family $\{M_t\}_{t \ge 0}$ moves inward, sweeping out the domain $\Omega$ enclosed by $M_0$. In \cite{colding2016differentiability}, Colding and Minicozzi proved that $u$ is twice differentiable, and thus the equation \eqref{eq:level2} holds in the classical sense. Furthermore, it is shown in \cite[Corollary 3.3]{white2000size} that the level set flow is nonfattening, meaning the level sets have no interior.

The singular set of the level set flow is given by
\begin{equation*}
\mathcal S=\{(x,u(x)) \mid \na u(x)=0\}.
\end{equation*}
In \cite{white2000size}, White established the significant result that $\mathcal{S}$ has parabolic Hausdorff dimension of at most $n-1$. Later, in \cite{white2003nature} and \cite{white2015subsequent}, White demonstrated that all tangent flows are generalized cylinders of the form $\{\mathbb{R}^k\times S^{n-k}(\sqrt{2(n-k)|t|})\}_{t \le 0}$ with multiplicity one. Furthermore, in \cite{colding2015uniqueness}, Colding and Minicozzi proved that the axis of these generalized cylinders are unique by establishing a discrete {\L}ojasiewicz inequality. 

The singular set $\mathcal S$ admits the following stratification:
\begin{equation*}
\mathcal{S}^0\subset\mathcal{S}^1\subset\cdots \subset \mathcal{S}^{n-1}=\mathcal{S},
\end{equation*}
where $\mathcal S^k$ consists of all singularities for which the tangent flow splits off an Euclidean factor of dimension at most $k$. Additionally, White proved in \cite{white1997stratification} that the parabolic Hausdorff dimension of $\mathcal S^k$ is at most $k$. Colding and Minicozzi further showed in \cite[Theorem 4.6]{colding2016singular} that each stratum $\mathcal S^k$ is contained in countably many of $k$-dimensional $C^1$-submanifolds, along with a set of lower dimension.

To further analyze the singular set, we define the following quantitative singular stratum, analogous to those in \cite{cheeger2013lower} and \cite{cheeger2013quantitative}.

\begin{defn} \label{def:sin1}
For $j\in\{0,1,\ldots,n-1\}$ and $r \in (0,1)$, the $j$-th quantitative singular stratum of the level set flow $\MM=\{M_t\}_{t \ge 0}$ is define as
\begin{equation*}
\mathcal{S}^{j}_{\epsilon,r}:=\{X\in \mathcal{M}\mid X \text{ is not } (j',\ep,s)\text{-cylindrical} \text{ for any } s\in [r,1) \text{ and any } j'>j\}.
\end{equation*}
\end{defn}
Here, a spacetime point $X \in \MM$ is $(j, \ep, r)$-cylindrical if, at scale $r$, the flow in a ball of size $\ep^{-1}$ can be written as a graph over a cylinder $\mathbb{R}^j\times S^{n-j}(\sqrt{2(n-j)})$ with $C^{[\ep^{-1}]}$-norm smaller than $\ep$; see Definition \ref{deficylindrical} for the precise formulation. It can be shown (see Proposition \ref{prop:decom1}) that
\begin{equation*}
\MS^j=\bigcup_{\ep>0}\bigcap_{r\in (0,1)}\MS^{j}_{\ep,r}.
\end{equation*}

In \cite{cheeger2013quantitative}, Cheeger, Haslhofer, and Naber introduced a quantitative singular stratum for general Brakke flows of any codimension, establishing a Minkowski-type volume estimate (see \cite[Theorem 1.14]{cheeger2013quantitative}). Our first main result provides a uniform and sharp volume estimate for the $j$-th quantitative singular stratum $\mathcal{S}^{j}_{\epsilon,r}$, where the bound depends essentially on the entropy (see equation \eqref{eq:entro}) and the noncollapsing constant (see Definition \ref{def:alpha}) of the initial hypersurface.

\begin{thm}\label{volumeestimate1}
Let $\mathcal{M}=\{M_t\}_{t \ge 0}$ be a level set flow starting from a closed, smooth, mean-convex, $\alpha$-noncollapsed hypersurface $M_0 \subset \R^{n+1}$ with $\lambda(M_0) \le \Lambda$. Then, for any $\ep>0$, there exists a constant $C=C(n, \alpha, \Lambda, \ep)>0$ such that for any $X \in \R^{n+1,1}$ with $\tf(X)>2$,
\begin{equation} \label{eq:main11}
\vol \lc \pball_r(\mathcal{S}^{j}_{\epsilon,r}) \bigcap \pball(X,1) \rc\leq Cr^{n+3-j},
\end{equation}
for $j\in\{0,1,\ldots,n-1\}$ and any $r \in (0,1)$. In particular, for $\MS^j_{\ep}:=\bigcap_{r\in (0,1)}\MS^{j}_{\ep,r}$, we have
\begin{equation} \label{eq:main21}
\mathscr{H}_P^j \lc \mathcal{S}_{\epsilon}^{j} \bigcap \pball(X, 1) \rc \le C.
\end{equation}
\end{thm}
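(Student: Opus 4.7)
My plan is to follow the quantitative stratification framework of Cheeger--Haslhofer--Naber \cite{cheeger2013quantitative}, specialized and sharpened to the mean-convex setting so that the Minkowski exponent $n+3-j$ is attained exactly rather than with an $\eta$-loss. The overall architecture is the standard one: (i) establish a quantitative cone-splitting lemma tying together $(j,\eta,r)$-cylindricality at two nearby points; (ii) prove a quantitative differentiation statement bounding, uniformly in the basepoint, the number of ``bad'' scales at which the flow transitions between distinct cylindrical profiles; (iii) combine these in a covering/iteration argument. The entropy bound $\lambda(M_0)\le \Lambda$ is used (via Huisken's monotonicity) to give uniform compactness of rescaled flows and to run the quantitative differentiation; the $\alpha$-noncollapsing is used, together with White's classification, to force all parabolic limits to be multiplicity-one generalized cylinders, so that closeness to a cylindrical model has its na\"\i ve geometric meaning.

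The heart of the argument is the quantitative cone-splitting: if $X\in\MM$ is $(j,\eta,r)$-cylindrical with axis-direction $V$, and if there exists $Y\in \pball(X,r)\cap\MM$ that is also $(j,\eta,r)$-cylindrical but whose displacement $Y-X$ has component of size $\ge \eta r$ orthogonal to the affine span of $V$, then $X$ is in fact $(j+1,\eta',r)$-cylindrical, with $\eta'\to 0$ as $\eta\to 0$ (for fixed $n,\alpha,\Lambda$). This is proved by contradiction and compactness, using that the limiting flow must simultaneously split off an $\R^j$-factor at $X$ and at $Y$, and by the classification of tangent flows in the mean-convex setting these two splittings force an $\R^{j+1}$-factor. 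Together with a quantitative differentiation argument—which converts the total entropy drop $\lambda(M_0)-\lambda(\{\text{flat plane}\})$ into a bound, depending only on $\eta$ and $\Lambda$, on the number of dyadic scales at which the local flow transitions between essentially distinct $(j,\eta,\cdot)$-cylindrical models—this yields a Reifenberg-type covering of $\MS^j_{\ep,r}\cap \pball(X,1)$ by $O(r^{-j})$ parabolic balls of radius $r$, from which \eqref{eq:main11} follows and then \eqref{eq:main21} by a routine packing-and-limit argument.

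The hard part, in my view, is removing the usual $r^{n+3-j-\eta}$ loss and obtaining the sharp exponent $n+3-j$ with the constant $C=C(n,\alpha,\Lambda,\ep)$ independent of basepoint. At the quantitative-differentiation step, a na\"\i ve pigeonhole on dyadic scales only gives a logarithmic, and hence $r^{-\eta}$, loss. To remove it, I would invoke the Colding--Minicozzi discrete \L ojasiewicz inequality \cite{colding2015uniqueness}, which gives a \emph{summable} decay for the defect between the cylindrical axes at consecutive scales; this implies that the axes at scale $r$ and at scale $2r$ differ by at most $Cr$ (rather than $Cr^{1-\eta}$), so that covers at scale $2r$ can be refined to covers at scale $r$ with a bounded multiplicative cost at each step, giving a convergent geometric sum rather than an exponential blow-up. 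Packaging this with the cone-splitting and covering scheme above yields \eqref{eq:main11} with the sharp exponent, and \eqref{eq:main21} is then immediate. The remaining technical care is to make the compactness in the cone-splitting lemma work at \emph{finite} scale $r$ (so that $\tf(X)>2$ is used to keep the rescaled flows defined on a large parabolic neighborhood), which is what forces the hypothesis $\tf(X)>2$ in the statement.
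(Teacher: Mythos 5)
Your first two ingredients (quantitative rigidity/cone-splitting and the use of the Colding--Minicozzi discrete {\L}ojasiewicz inequality, i.e.\ quantitative uniqueness of the cylindrical model across a range of scales) are indeed the right starting points, and they do appear in the paper. But the step where you claim to remove the $r^{-\eta}$ loss contains a genuine gap. First, the loss in the Cheeger--Haslhofer--Naber scheme is not caused by drift of the cylindrical axis between consecutive scales; it is a combinatorial loss coming from the fact that the (uniformly bounded) set of ``bad'' scales at which no cone-splitting is available varies from point to point, so the dyadic covering must be summed over all admissible bad-scale patterns. Second, and more fundamentally, your proposed accounting does not close: knowing that the axis at scale $r$ and at scale $2r$ differ by at most $Cr$ only gives a \emph{bounded} multiplicative covering cost per dyadic step, and a bounded cost $C$ per step yields $C^{\log_2(1/r)}=r^{-\log_2 C}$, i.e.\ exactly a polynomial loss, not ``a convergent geometric sum.'' To get the sharp exponent $n+3-j$ one needs the covering multiplicity at (almost) every step to be precisely the $j$-dimensional one, with all exceptional contributions controlled in \emph{content} (a bound of the form $\sum_b r_b^{\,j}\le C$), and nothing in your outline produces such a content estimate. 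You also do not address the portions of $\MS^j_{\ep,r}$ lying in regions where the flow is not yet close to any $j$-cylinder at the given scale (high-density or unpinched regions), where your cone-splitting/axis-tracking argument simply does not apply.

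This is exactly where the paper's proof diverges from your plan: instead of a scale-by-scale refinement bound it proves a neck decomposition theorem. Quantitative uniqueness (the {\L}ojasiewicz input) is used to define $(j,\delta,r)$-neck regions whose centers are compact Lipschitz graphs over a $j$-plane, which yields Ahlfors regularity of an associated $j$-dimensional packing measure; the ball of unit size is then decomposed, by an induction on ball types ($a$-balls $=$ neck regions, $b$-balls with extra symmetry, and $c$-, $d$-, $e$-balls), into neck regions, higher-symmetry balls and a lower-stratum set $S_0\subset\MS^{j-1}$, with the crucial content estimate $\sum_a r_a^{\,j}+\sum_b r_b^{\,j}+\mathscr H_P^j(\bigcup_a\ccc_{0,a})\le C$. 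The geometric decay that makes the induction converge comes from two mechanisms you would need but have not supplied: the $d$-ball contribution decays like $(C(n)\beta)^l$ because non-independent pinched sets concentrate near a $(j-1)$-plane, and the $e$-ball recursion terminates after finitely many steps because the local Gaussian density drops by a definite amount $\zeta/2$ at each stage (this is where $\Lambda$ enters quantitatively). The volume bound \eqref{eq:main11} is then deduced from this decomposition together with the Ahlfors regularity of the packing measure inside each neck region, not from a Reifenberg-type count of $O(r^{-j})$ balls asserted directly. Without an analogue of the content estimate and of these two decay mechanisms, your argument as written only recovers the CHN-type bound with an $\eta$-loss.
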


Here, $\vol(\cdot)$ denotes the parabolic volume, and $\pball(\cdot)$ and $\mathscr{H}_P^j$ denote balls and the $j$-dimensional parabolic Hausdorff measure with respect to the parabolic distance (see \eqref{parabolicdist}), respectively. For further notation, readers may refer to subsection \ref{sub:nota} for precise definitions.

It was proved by Colding and Minicozzi \cite[Theorem 4.6]{colding2016singular} that the top stratum $\MS \setminus \MS^{n-2}$ is contained in finitely many $(n-1)$-dimensional $C^1$-submanifolds. In particular, this implies that $\MS$ has locally finite $(n-1)$-dimensional parabolic Hausdorff measure. Since $\MS^{n-1}_{\ep}=\MS$ if $\ep \le \ep(n, \alpha, \Lambda)$ (see Proposition \ref{prop:top}), we obtain the following result as a corollary of Theorem \ref{volumeestimate1}.

\begin{cor}
Let $\mathcal{M}=\{M_t\}_{t \ge 0}$ be a level set flow starting from a closed, smooth, mean-convex, $\alpha$-noncollapsed hypersurface $M_0 \subset \R^{n+1}$ with $\lambda(M_0) \le \Lambda$. Then, there exists a constant $C=C(n, \alpha, \Lambda)>0$ such that for any $X \in \R^{n+1,1}$ with $\tf(X)>2$,
\begin{equation*}
\mathscr{H}_P^{n-1}\lc \MS\bigcap \pball(X,1)\rc\leq C.
\end{equation*}
\end{cor}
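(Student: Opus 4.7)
The plan is to deduce the corollary as a direct consequence of the parabolic Hausdorff measure bound \eqref{eq:main21} in Theorem \ref{volumeestimate1}, applied to the top stratum $j=n-1$. The only extra ingredient needed is the identification, recorded as Proposition \ref{prop:top}, that for mean-convex, $\alpha$-noncollapsed flows with entropy bounded by $\Lambda$ the qualitative singular set $\MS$ coincides with the quantitative stratum $\MS^{n-1}_{\ep}$ as soon as $\ep$ is chosen small enough in terms of $(n,\alpha,\Lambda)$.

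First I would invoke Proposition \ref{prop:top} to select a threshold $\ep_0=\ep_0(n,\alpha,\Lambda)>0$ such that $\MS^{n-1}_{\ep_0}=\MS$. Conceptually this reflects White's classification of tangent flows together with the Colding--Minicozzi uniqueness of the cylindrical axis: at every spacetime singular point the tangent flow is a multiplicity-one generalized cylinder $\R^{k}\times S^{n-k}(\sqrt{2(n-k)|t|})$ with $k\le n-1$, so the flow is automatically at least $(n-1)$-cylindrical at sufficiently small scales, and the quantitative threshold needed to detect this symmetry can be chosen uniformly in the given class of initial data. Next I would apply \eqref{eq:main21} with $j=n-1$ and $\ep=\ep_0$, producing a constant $C=C(n,\alpha,\Lambda,\ep_0)$ such that
\begin{equation*}
\mathscr{H}_P^{n-1}\lc \MS^{n-1}_{\ep_0}\bigcap \pball(X,1)\rc \le C
\end{equation*}
for every $X\in\R^{n+1,1}$ with $\tf(X)>2$. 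Since $\ep_0$ has been fixed as a function of $(n,\alpha,\Lambda)$ only, the constant $C$ collapses to $C(n,\alpha,\Lambda)$, and the identification $\MS^{n-1}_{\ep_0}=\MS$ from the first step finishes the proof.

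I do not anticipate any genuine obstacle: the corollary is a pure specialization of the main theorem, so the only verification required is that the threshold supplied by Proposition \ref{prop:top} is admissible in \eqref{eq:main21}. This compatibility is immediate because both statements are formulated for exactly the same class of initial hypersurfaces, namely closed smooth mean-convex $\alpha$-noncollapsed ones with $\lambda(M_0)\le \Lambda$; in particular, no new geometric input beyond Theorem \ref{volumeestimate1} and Proposition \ref{prop:top} is needed.
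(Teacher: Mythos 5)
Your argument is correct and is exactly the route the paper takes: fix $\ep_0=\ep_0(n,\alpha,\Lambda)$ from Proposition \ref{prop:top} so that $\MS^{n-1}_{\ep_0}=\MS$, then apply the bound \eqref{eq:main21} of Theorem \ref{volumeestimate1} with $j=n-1$ and $\ep=\ep_0$, noting the constant then depends only on $(n,\alpha,\Lambda)$. No gaps.
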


If the initial hypersurface $M_0$ is strictly $k$-convex, in the sense that $\lambda_1+\cdots+\lambda_k>0$, where $\lambda_1 \le \cdots \le \lambda_n$ are the principal curvatures, then we can assume 
\begin{equation}\label{uniformkconvex}
\lambda_1+\cdots+\lambda_k\geq \beta H>0
\end{equation}
for some $\beta>0$. It can be shown (see \cite{cheeger2013quantitative} and \cite{haslhofer2017mean1}) that condition \eqref{uniformkconvex} is preserved and holds at any smooth point under the level set flow. Furthermore, this condition restricts all tangent flows at a singular point to be of the form of $\{\R^j\times S^{n-j}(\sqrt{2(n-j)|t|})\}_{t \le 0}$ for $j\in\{0,1,\ldots, k-1\}$, implying that $\MS=\MS^{k-1}$. Similar to Proposition \ref{prop:top}, if $\ep \le \ep(n, \alpha, \beta, \Lambda)$, then $\MS^{k-1}_{\ep}=\MS$, and we obtain the following result:

\begin{cor}
Let $\mathcal{M}=\{M_t\}_{t \ge 0}$ be a level set flow starting from a closed, smooth, $\alpha$-noncollapsed hypersurface $M_0 \subset \R^{n+1}$ satisfying condition \eqref{uniformkconvex} and $\lambda(M_0) \le \Lambda$. Then, there exists a constant $C=C(n, \alpha, \beta, \Lambda)>0$ such that for any $X \in \R^{n+1,1}$ with $\tf(X)>2$,
\begin{equation*}
\mathscr{H}_P^{k-1} \lc \MS\bigcap \pball(X,1) \rc \leq C.
\end{equation*}
\end{cor}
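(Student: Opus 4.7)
The approach is to identify the full singular set $\MS$ with the quantitative stratum $\MS^{k-1}_{\ep_0}$ for a uniform constant $\ep_0=\ep_0(n,\alpha,\beta,\Lambda)$, after which the desired bound is an immediate instance of Theorem~\ref{volumeestimate1} applied with $j=k-1$.

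The first ingredient, recorded in \cite{cheeger2013quantitative,haslhofer2017mean1}, is that the strict $k$-convexity condition \eqref{uniformkconvex} is preserved at every smooth spacetime point of the level set flow. Combined with White's classification of tangent flows in the mean-convex setting \cite{white2003nature,white2015subsequent}, this forces every tangent flow at a point of $\MS$ to be a generalized cylinder $\R^{j}\times S^{n-j}(\sqrt{2(n-j)|t|})$ with $j\le k-1$; in particular $\MS=\MS^{k-1}$ in the smooth stratification.

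The key step is the quantitative upgrade $\MS^{k-1}_{\ep_0}=\MS$, mirroring the argument of Proposition~\ref{prop:top}. I would argue by contradiction. If the upgrade fails, there exist $\ep_i\downarrow 0$, singular points $X_i\in\MS$, integers $j_i\ge k$, and scales $s_i\in(0,1)$ such that each $X_i$ is $(j_i,\ep_i,s_i)$-cylindrical. Passing to a subsequence we may assume $j_i\equiv j'\ge k$. After parabolically rescaling each flow about $X_i$ by $s_i^{-1}$, the very definition of $(j',\ep_i,s_i)$-cylindricality yields a $C^{[\ep_i^{-1}]}$-approximation of the $j'$-dimensional round cylinder with error $\ep_i\to 0$ on an exhausting family of parabolic balls, so the rescaled flows converge smoothly to the cylinder. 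Since \eqref{uniformkconvex} is preserved at smooth points of every rescaled flow and is a closed condition, the limit satisfies $\lambda_1+\cdots+\lambda_k\ge\beta H$ at each of its smooth points. However the cylinder $\R^{j'}\times S^{n-j'}$ with $j'\ge k$ has its $k$ smallest principal curvatures all equal to zero, giving $\lambda_1+\cdots+\lambda_k=0<\beta H$, a contradiction.

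Combining the two steps yields $\MS=\MS^{k-1}_{\ep_0}$, and then the second conclusion \eqref{eq:main21} of Theorem~\ref{volumeestimate1} with $j=k-1$ and $\ep=\ep_0$ immediately produces the desired estimate $\mathscr{H}_P^{k-1}\lc\MS\bigcap\pball(X,1)\rc\le C(n,\alpha,\beta,\Lambda)$. The most delicate point I expect is verifying that the preservation of \eqref{uniformkconvex} propagates uniformly to limits within the $\alpha$-noncollapsed, $\Lambda$-entropy-bounded class and that the requisite smooth compactness is available on the exhausting parabolic balls; but this is exactly the setting in which Proposition~\ref{prop:top} is established, so the adaptation should proceed along the same lines with the only change being the target dimension $k-1$ in place of $n-1$.
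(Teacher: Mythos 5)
Your route is the same as the paper's: show $\MS=\MS^{k-1}$ via preservation of \eqref{uniformkconvex} and the classification of tangent flows, upgrade this quantitatively to $\MS^{k-1}_{\ep_0}=\MS$ for some $\ep_0=\ep_0(n,\alpha,\beta,\Lambda)$ by a contradiction/compactness argument modeled on Proposition \ref{prop:top}, and then quote \eqref{eq:main21} of Theorem \ref{volumeestimate1} with $j=k-1$. This is exactly how the paper argues, so the proposal is essentially correct.

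One subcase of your contradiction step, however, does not close as written: after passing to a subsequence with $j_i\equiv j'\ge k$, your final contradiction is $\lambda_1+\cdots+\lambda_k=0<\beta H$ on the limit cylinder, but when $j'=n$ the limit is a static plane with $H=0$, so both sides vanish and \eqref{uniformkconvex} is not violated. In that case the contradiction must instead come from White's local regularity theorem, exactly as in the proof of Proposition \ref{prop:top}: being $(n,\ep_i,s_i)$-cylindrical with $\ep_i\to 0$ forces $X_i$ to be a regular point for large $i$, contradicting $X_i\in\MS$. Adding this case (or simply splitting the argument into $k\le j'\le n-1$, handled by the curvature condition, and $j'=n$, handled by local regularity) completes the proof along the paper's lines.
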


The proof of Theorem \ref{volumeestimate1} is based on several ingredients. 

The first is a quantitative version (see Theorem \ref{quantitativeuniq}) of the uniqueness of cylindrical tangent flows, as established by Colding and Minicozzi \cite{colding2015uniqueness}. Roughly speaking, this result states that if the Gaussian density ratio $\Theta(X, \tau)$ for a spacetime point $X \in \MM$ is nearly constant at two different scales $\tau_1$ and $\tau_2$, then $X$ is $(j, \ep, r)$-cylindrical for any $r \in [\sqrt \tau_1, \sqrt \tau_2]$, with the same underlying cylinder $\R^j\times S^{n-j}(\sqrt{2(n-j)})$.

Next, we define the concept of a neck region $\nnn$ (see Definition \ref{defiofneckregion}). The notion of a neck region first appeared in \cite{jiang20212} in the proof of the $(n-4)$-finiteness conjecture and was later used to establish the rectifiability of singular sets in noncollapsed limit spaces with Ricci curvature bounded below (see \cite{Cheeger2018RectifiabilityOS}). With the quantitative uniqueness, we obtain a structural characterization of the center $\ccc$ of the neck region. More precisely, it can be shown (see Theorem \ref{neckstructurethm}) that $\ccc$ is a compact Lipschitz graph over a $j$-plane. 

A crucial step is to establish the neck decomposition theorem (see Theorem \ref{neckdecomposition}), which also holds in the context of Ricci limit spaces (cf. \cite[Theorem 2.12]{Cheeger2018RectifiabilityOS}). In essence, we obtain the decomposition
\begin{equation} \label{eq:cov1}
\pball(X, r) \bigcap \MM \subset \bigcup_a (\nnn_a\bigcup \ccc_a) \bigcup_b \pball(Y_b,r_b)\bigcup \MS^{j-1}
\end{equation}
with the following content estimate:
\begin{equation}\label{eq:con1}
\sum_a r_a^{j}+\sum_b r_b^{j}+\mathscr H_P^j(\bigcup_a \ccc_a) \leq C r^j.
\end{equation}
Here, each $\nnn_a \subset \pball(Y_a, 2r_a)$ is a neck region with center $\ccc_a$, each $\pball(Y_b,r_b)$ approximately splits off an $\R^{j'}$ for some $j'>j$ at scale $r_b$, and $\MS^{j-1}$ has lower dimension. We will prove (see Corollary \ref{cor:quali}) that $\MS^{j} \setminus \MS^{j-1}$ is essentially contained in the union of centers $\ccc_a$, thereby recovering the following important structure theorem, originally proved by Colding and Minicozzi \cite{colding2016singular}:

\begin{thm}\label{thm:quali}
Let $\mathcal{M}=\{M_t\}_{t \ge 0}$ be a level set flow starting from a closed, smooth, mean-convex hypersurface. Then,
\begin{enumerate}[label=(\alph*)]
\item $\MS^{n-1}\setminus \MS^{n-2}$ is contained in finite compact $(n-1)$-dimensional Lipschitz graphs.
\item For $j \le n-2$, $\MS^{j}\setminus \MS^{j-1}$ is contained in countable compact $j$-dimensional Lipschitz graphs.
\end{enumerate}
\end{thm}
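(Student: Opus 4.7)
The plan is to derive Theorem \ref{thm:quali} as a consequence of two ingredients established earlier in the paper: Corollary \ref{cor:quali}, which gives the essential containment of $\MS^{j}\setminus\MS^{j-1}$ in a countable union of neck-region centers $\bigcup_a \ccc_a$, and the Neck Structure Theorem \ref{neckstructurethm}, which identifies each center $\ccc_a$ as a compact Lipschitz graph over a $j$-plane. For part (b), I would cover the bounded spacetime of $\MM$ by a single parabolic ball $\pball(X_0,R)$, apply the neck decomposition theorem \ref{neckdecomposition} in the form \eqref{eq:cov1}--\eqref{eq:con1}, and iterate on each sub-ball $\pball(Y_b,r_b)$ where the flow approximately splits off a higher-dimensional Euclidean factor. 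The resulting family of neck centers, collected across all iteration stages, is countable; each is a compact $j$-dimensional Lipschitz graph by Theorem \ref{neckstructurethm}; and their union covers $\MS^{j}\setminus\MS^{j-1}$ by (an iterated form of) Corollary \ref{cor:quali}.

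For part (a), the task is to upgrade ``countable'' to ``finite''. I would first invoke Proposition \ref{prop:top}, which gives $\MS^{n-1}_{\ep}=\MS$ for $\ep\le\ep(n,\alpha,\Lambda)$, so that every singular point is captured at the top stratum at a uniform quantitative scale. Then I would apply the neck decomposition once with $j=n-1$ on the large ball $\pball(X_0,R)$: any sub-ball $\pball(Y_b,r_b)$ in \eqref{eq:cov1} would correspond to a $j'>n-1$ splitting, i.e.\ a fully $\R^n$-symmetric (smooth hyperplane) region that contains no singular points and can be discarded. What remains is a family of neck centers whose scales $r_a$ should admit a uniform positive lower bound depending only on $(n,\alpha,\Lambda,\ep)$, coming from a compactness argument using the entropy bound $\lambda(M_0)\le \Lambda$; the content estimate \eqref{eq:con1} then bounds the number of centers, yielding finiteness.

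The main obstacle I anticipate is verifying that the iterative cover in part (b) truly captures all of $\MS^{j}\setminus\MS^{j-1}$, rather than only up to a set of zero $j$-dimensional parabolic Hausdorff measure. Points that survive every stage of the iteration must lie in an intersection $\bigcap_k \bigcup_b \pball(Y_{b,k},r_{b,k})$ with $r_{b,k}\to 0$; the content bound \eqref{eq:con1} controls the $j$-content at every stage, so I expect a Vitali-type covering argument to show the residual has vanishing $j$-dimensional parabolic Hausdorff measure, which can then be absorbed into countably many additional compact Lipschitz graphs of arbitrarily small total measure. A related subtle point for part (a) is establishing the uniform lower bound on $r_a$ without circular appeal to the Colding--Minicozzi structure theorem; this should follow from the quantitative uniqueness Theorem \ref{quantitativeuniq} together with Huisken's monotonicity, which forbid top-stratum Gaussian densities from accumulating at arbitrarily small scales.
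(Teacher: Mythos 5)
Your overall skeleton (Corollary \ref{cor:quali} plus Theorem \ref{neckstructurethm}, then a covering of the compact spacetime track) is the paper's route, but two of your added steps are genuine gaps. For part (b): the mechanism for capturing all of $\MS^j\setminus\MS^{j-1}$ is \emph{not} an iteration inside the $b$-balls. Part (e) of Theorem \ref{neckdecomposition} gives the \emph{exact} containment $\pball(X,r)\bigcap\MS^j_{\ep}\subset S_0\bigcup\bigcup_a\ccc_{0,a}$ with $S_0\subset\MS^{j-1}$ (indeed, Case 2 of its proof shows $b$-balls are entirely avoided by $\MS^j_{\ep}$), and the passage from the quantitative strata to $\MS^j$ is done by superimposing countably many decompositions for a sequence $\ep_k\to0$ and using $\MS^j=\bigcup_k\MS^j_{\ep_k}$ (Proposition \ref{prop:decom1}); this is exactly the proof of Corollary \ref{cor:quali}, and no residual set ever appears. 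Your fallback for the anticipated residual --- absorbing a set of zero $\mathscr H_P^j$-measure into ``countably many additional compact Lipschitz graphs of arbitrarily small total measure'' --- would not work even if it were needed: vanishing $j$-dimensional (parabolic) Hausdorff measure does not imply containment in countably many Lipschitz graphs, so that patch cannot repair an ``essential'' containment. The point to internalize is that Theorem \ref{neckdecomposition}(e) already gives containment on the nose, which is why the paper's proof of Theorem \ref{thm:quali} is a one-line covering argument on top of Corollary \ref{cor:quali}.

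For part (a): your key claim, that the neck radii $r_a$ produced by the decomposition admit a uniform positive lower bound depending only on $(n,\alpha,\Lambda,\ep)$, is unsupported and is not what the paper proves or uses. The construction in Propositions \ref{decompositioncball}--\ref{inductivedecomposition} generates regions at geometrically decaying scales ($\gamma^k$, $\beta^k$, and the $e$-ball recursion), and the content estimate \eqref{eq:con1} only bounds $\sum_a r_a^{n-1}$, which is compatible with infinitely many small neck regions; a compactness argument with the entropy bound does not by itself force the algorithm's output scales to be uniformly bounded below. The paper instead gets finiteness from Proposition \ref{prop:top} (so a single fixed $\ep\le\ep(n,\alpha,\Lambda)$ suffices, with no $\ep_k\to0$) together with the additional observation that $\MS^{n-1}\setminus\MS^{n-2}$ is \emph{compact} (via the density gap \eqref{entropycyl} and upper semicontinuity of $\Theta$): each top-stratum point has a definite scale at which Theorem \ref{quantitativeuniq} puts the nearby singular set inside a single neck center, and compactness extracts a finite subcover, giving finitely many compact Lipschitz graphs. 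Your closing remark about densities not accumulating at small scales gestures in this direction, but as written the finiteness step in your proposal rests on a property of the decomposition that has not been established.
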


The proof of the neck decomposition theorem relies on a refined covering argument as in \cite{Cheeger2018RectifiabilityOS}. By dividing all parabolic balls into different types (see Definition \ref{allkindsofballs}), we achieve the decomposition \eqref{eq:cov1} with the content estimate \eqref{eq:con1} via an iterative covering argument (see Proposition \ref{decompositioncball}, Proposition \ref{decompositiondball} and Proposition \ref{inductivedecomposition}). The proof of Theorem \ref{volumeestimate1} is based on the neck decomposition theorem and the content estimate \eqref{eq:con1}.

The organization of this paper is as follows. In Section 2, we review fundamental concepts and results related to mean curvature flows, focusing on level set flows starting from smooth, closed, mean-convex hypersurfaces. In Section 3, we establish the quantitative uniqueness of cylindrical singularities. Section 4, which contains the primary technical details, formulates the definition of a neck region and proves its structural properties. This section also includes the proof of the neck decomposition theorem. Finally, in Section 5, we conclude with the proof of our main theorem.
\\
\\
\\
\textbf{Acknowledgements}: Hanbing Fang would like to thank his advisor, Prof. Xiuxiong Chen, for his encouragement and support. Hanbing Fang is supported by the Simons Foundation. Yu Li is supported by YSBR-001, NSFC-12201597 and research funds from the University of Science and Technology of China and the Chinese Academy of Sciences. 

\section{Preliminaries}

In this section, we review some definitions, notations, and prior results in mean curvature flow that will be frequently used throughout the paper.

\subsection{Notations and conventions} \label{sub:nota}

We define the spacetime $\spacetime = \mathbb{R}^{n+1} \times \mathbb{R}$. Throughout this paper, uppercase letters such as $X, Y, Z$, etc., denote points in $\spacetime$, while lowercase letters $x, y, z$, etc., represent points in $\espace$. The origin in space is denoted by $\vec{0}^{n+1}$, and the origin in spacetime is represented by $\vec{0}^{n+1,1}$. For any $X \in \spacetime$, the space component and time component are denoted by $\ef(X)$ and $\tf(X)$, respectively.

The parabolic distance on $\spacetime$ is defined as follows: for $X,Y \in \spacetime$,
\begin{equation}\label{parabolicdist}
\|X-Y\| := \max\{ |\ef(X)-\ef(Y)|,|\tf(X)-\tf(X)|^{\frac 1 2}\},
\end{equation}
where $|\cdot|$ represents the standard Euclidean distance in $\R^{n+1}$. Additionally, we denote the parabolic ball centered at $X$ with radius $r$ by $\pball(X,r)=B(\ef(X),r)\times (\tf(X)-r^2,\tf(X)+r^2)$.

The $k$-dimensional Hausdorff measure on $\R^{n+1}$ is denoted by $\mathscr{H}^k$, and the $k$-dimensional parabolic Hausdorff measure on $\spacetime$, denoted by $\mathscr{H}^k_P$, is defined with respect to the distance given in \eqref{parabolicdist}. In particular, we use $\vol(\cdot)$ to denote $\mathscr{H}^{n+3}_P$.

We will also use $\Psi(\ep)$ to denote a quantity that satisfies $\Psi(\ep) \to 0$ as $\ep \to 0$, assuming all other related quantities are fixed. Note that $\Psi(\ep)$ may vary from line to line.

\subsection{Integral Brakke flows}

The Brakke flow is an important notion of the weak mean curvature flow (cf. \cite{brakke2015motion}\cite{Ilmanen1994EllipticRA}), whose definition we recall.

\begin{defn}[Integral Brakke flow]
An \textbf{$n$-dimensional integral Brakke flow} $\mathcal{M}$ in $\R^{n+1}$ is a one-parameter family of Radon measures $\mathcal M=(\mu_t)_{t \in I}$ over an interval $I \subset \R$ so that:
\begin{enumerate}[label=(\alph*)]
\item For almost every $t \in I$, there exists an integer rectifiable $n$-varifold $V(t)$ with $\mu_t=\mu_{V(t)}$ so that $V(t)$ has locally bounded first variation, and its mean curvature $\mathbf{H}$ is orthogonal to $\mathrm{Tan}(V(t),\cdot)$ almost everywhere.

\item For any $[a,b] \subset I$ and compact set $K \subset \R^{n+1}$
\begin{equation*}
\int_a^b \int_K (1+|\mathbf{H}|^2) \,d\mu_t dt< \infty.
\end{equation*}

\item If $[a,b] \subset I$ and $\varphi \in C_c^1(\R^{n+1} \times [a,b],[0,\infty))$, then
\begin{equation*}
\int \varphi(\cdot, b) \,d\mu_b-\int \varphi(\cdot, a) \,d\mu_a \le \int_a^b \int -|\mathbf{H}|^2 \varphi+\mathbf{H} \cdot \na \varphi+\frac{\partial}{\partial t} \varphi \,d\mu_t dt.
\end{equation*}
\end{enumerate}
\end{defn}

For an integer $n$-rectifiable Radon measure $\mu$ on $\R^{n+1}$, we set
\begin{align*}
F(\mu):= (4\pi)^{-\frac{n}{2}}\int e^{-\frac{|x|^2}{4}}d\mu(x).
\end{align*}
Following \cite{colding2012generic}, the entropy of $\mu$ is defined as
\begin{align*}
\lambda (\mu) := \sup_{(y,\rho) \in \R^{n+1} \times \R_+} F(\mu^{y,\rho}) \quad \text{where} \quad \mu^{y,\rho}(U)=\rho^{n} \mu(\rho^{-1} U+y)
\end{align*}
denotes the rescaled measure. For an integral Brakke flow $\mathcal{M}=(\mu_t)_{t \ge 0}$, we set 
\begin{equation} \label{eq:entro}
\lambda(\mathcal{M})=\sup_{t \ge 0}\lambda(\mu_t).
\end{equation}

The following celebrated monotonicity formula of Huisken holds (cf. \cite{huisken1984flow}\cite[Lemma 7]{Il95}).

\begin{prop}\label{prop:mono}
Let $\mathcal M=(\mu_t)_{t \ge 0}$ be an integral Brakke flow on $\R^{n+1}$ with $\lambda(\mu_{0})<\infty$. Then for any $Y=(y,s) \in \R^{n+1} \times (0,\infty)$ and all $0 \le t_1 \le t_2<s$,
\begin{align*}
&\int \Phi_{Y}(x,t_2) \,d\mu_{t_2}(x)-\int \Phi_{Y}(x,t_2)\,d\mu_{t_1}(x)\le -\int_{t_1}^{t_2} \int \left|\mathbf{H}(x,t)+\frac{(x-y)^{\perp}}{2(s-t)} \right|^2\Phi_{Y}(x,t)\,d\mu_t(x) dt,
\end{align*}
where
\begin{align*}
\Phi_{Y}(x,t)=(4\pi(s-t))^{-n/2}e^{-\frac{|x-y|^2}{4(s-t)}}.
\end{align*}
In particular, $\lambda(\mu_t)$ is non-increasing for $t \ge 0$ and hence $\lambda(\mathcal M)=\lambda(\mu_{0})$. 
\end{prop}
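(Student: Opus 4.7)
The plan is to carry out the classical Huisken monotonicity computation, adapted to the Brakke setting so that the varifold first variation and the test-function inequality in part (c) of the Brakke definition do the work that integration by parts does in the smooth case. First I would want to substitute $\varphi=\Phi_Y$ into the Brakke inequality, but since $\Phi_Y$ is not compactly supported I would apply the inequality to $\chi_R \Phi_Y$ for a standard spatial cutoff $\chi_R$ with support in $B(y,R)$, equal to $1$ on $B(y,R/2)$, and then send $R\to\infty$. The Gaussian decay of $\Phi_Y$ together with the a priori polynomial area growth of $\mu_t$, which follows from $\lambda(\mu_0)<\infty$ and the definition of entropy, makes all error terms coming from $\nabla \chi_R$, $\partial_t\chi_R$ and $\Delta\chi_R$ tend to zero.

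Once the test function is in place, the main identity is that $\Phi_Y$ satisfies the backward heat equation $\partial_t\Phi_Y+\Delta_{\mathbb R^{n+1}}\Phi_Y=0$ for $t<s$, together with $\nabla \Phi_Y=-\frac{x-y}{2(s-t)}\Phi_Y$. On the rectifiable varifold $V(t)$ I split the ambient Laplacian into tangential and normal parts, $\Delta \Phi_Y=\Delta^{V}\Phi_Y+\mathrm{tr}_{\nu}D^2\Phi_Y$, and use the first variation formula $\int \Delta^V\Phi_Y\,d\mu_t=-\int \mathbf H\cdot\nabla\Phi_Y\,d\mu_t$, which is valid because $\mathbf H\perp \mathrm{Tan}(V(t),\cdot)$ a.e.\ by assumption. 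Plugging everything into the Brakke inequality and using $(x-y)^\perp\cdot\mathbf H=(x-y)\cdot\mathbf H$ (by normality of $\mathbf H$), the normal Hessian piece of $\Phi_Y$ supplies exactly the cross-term needed to reorganize the integrand as
\[
-\Bigl|\mathbf H+\frac{(x-y)^\perp}{2(s-t)}\Bigr|^{2}\Phi_Y.
\]
Integrating in time from $t_1$ to $t_2$ gives the stated inequality.

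For the entropy statement, I would use the scaling identity that for each $t_0$ and each $(y_0,\rho)$, the quantity $F(\mu_{t_0}^{y_0,\rho})$ is, after a change of variables, the same as $\int \Phi_{Y}(x,t_0)\,d\mu_{t_0}(x)$ for a suitable choice of $Y=(y_0,t_0+\rho^{-2})$. The monotonicity inequality above, applied between times $0$ and $t_0$, then shows this quantity is bounded by the corresponding Gaussian integral at time $0$, which is itself at most $\lambda(\mu_0)$; taking the supremum over $(y_0,\rho)$ yields $\lambda(\mu_{t_0})\le\lambda(\mu_0)$, hence $\lambda(\mathcal M)=\lambda(\mu_0)$.

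The main obstacle is the cutoff argument in Step 1: one must verify that the error terms produced when the derivatives in the Brakke inequality hit $\chi_R$ can be controlled uniformly, which requires a quantitative bound on $\mu_t(B(y,R))$ in terms of $R$ and $\lambda(\mu_0)$. Once this is in place, Steps 2 and 3 are a direct algebraic computation, and the entropy corollary is a straightforward consequence of translation and parabolic scaling.
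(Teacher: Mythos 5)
The paper does not actually prove this proposition; it is quoted as Huisken's monotonicity formula in the Brakke setting with a citation to Huisken and to Ilmanen (Lemma 7), so the relevant comparison is with that classical argument, which your plan follows in outline: cut off the Gaussian, plug into the Brakke inequality, use the first variation together with the perpendicularity of $\mathbf H$, and reassemble the integrand into $-\bigl|\mathbf H+\tfrac{(x-y)^\perp}{2(s-t)}\bigr|^2\Phi_Y$; the rescaling argument you give for the entropy statement is also the standard one and is correct.

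There is, however, a concrete error in your key identity. Because $\Phi_Y$ carries the $n$-dimensional normalization $(4\pi(s-t))^{-n/2}$ while living in $\R^{n+1}$, it does \emph{not} satisfy $\partial_t\Phi_Y+\Delta_{\R^{n+1}}\Phi_Y=0$; a direct computation gives
\begin{equation*}
\partial_t\Phi_Y+\Delta_{\R^{n+1}}\Phi_Y=-\frac{\Phi_Y}{2(s-t)},
\end{equation*}
and this extra term is exactly what cancels the trace part of the normal Hessian, since $D^2\Phi_Y(\nu,\nu)=\bigl(-\tfrac{1}{2(s-t)}+\tfrac{|(x-y)^{\perp}|^2}{4(s-t)^2}\bigr)\Phi_Y$, leaving the correct identity $\partial_t\Phi_Y+\mathrm{tr}_{T}D^2\Phi_Y=-\tfrac{|(x-y)^{\perp}|^2}{4(s-t)^2}\Phi_Y$ on the tangent plane $T$. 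If you run your Step 2 with the false ambient backward heat equation, the algebra leaves an uncancelled $+\tfrac{\Phi_Y}{2(s-t)}$ and the integrand does not close up into the square, so the inequality does not follow as written; the fix is simply to use the identity above. A second, smaller point: the polynomial growth of $\mu_t(B(y,R))$ for $t>0$ does not follow ``from the definition of entropy,'' since boundedness of $\lambda(\mu_t)$ is part of the conclusion; you should instead derive local area bounds at later times from the Brakke inequality itself (or condition (b)) with compactly supported test functions, propagating the area-ratio bound that $\lambda(\mu_0)<\infty$ gives at time $0$. With these two repairs your proposal is the standard proof of the cited result.
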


For $X=(x,t)\in \mathbb{R}^{n+1,1}$ with $t>0$ and $\tau>0$, we recall the \textbf{Gaussian density ratio} is defined by
\begin{equation*}
\Theta(\mathcal{M}, X,\tau):=\int \Phi_X(y,t-\tau)\,d\mu_{t-\tau}(y),
\end{equation*}
By Proposition \ref{prop:mono}, $\Theta(\mathcal{M}, X,\tau)$ is increasing in $\tau$, and thus the limit as $\tau \to 0^+$, referred to as the \textbf{Gaussian density}, exists and is denoted by
\begin{equation*}
\Theta (\mathcal{M},X):=\lim_{\tau\to 0^+}\Theta(\mathcal{M},X,\tau).
\end{equation*}
It is straightforward from the definition that $\Theta(\mathcal M, X)$ is upper semi-continuous in $X$.

\subsection{Level set flow from a mean-convex hypersurface}

Let $\{M_t\}_{t \ge 0}$ be the level set flow starting from a closed, smooth, mean-convex hypersurface $M_0$. This flow, $\{M_t\}_{t \ge 0}$, represents the maximal family of the closed set originating from $M_0$ that satisfies the avoidance principle; see \cite[Section 10]{Ilmanen1994EllipticRA}. It can be shown, as in \cite[Corollary 3.3]{white2000size}, that the level set flow is nonfattening, meaning the level sets have no interior. Consequently, it follows from \cite[Theorem 11.4]{Ilmanen1994EllipticRA} that 
\begin{equation*}
\MM=(\mu_t)_{t \ge 0}:=(\mathscr{H}^n\llcorner M_t)_{t\geq 0}
\end{equation*}
defines an integral Brakke flow. 

Next, we recall the following definition from \cite{sheng2009singularity} and \cite{andrews2012noncollapsing}.

\begin{defn}[$\alpha$-noncollapsed] \label{def:alpha}
A smooth, mean-convex hypersurface $M$ enclosing a domain $\Omega$ is \textbf{$\alpha$-noncollapsed} for a constant $\alpha>0$ if for any $x \in M$, there exist closed balls $\bar B_{\mathrm{int}}\subset \Omega,\bar B_{\mathrm{ext}}\subset \R^{n+1}\setminus \mathrm{int}(\Omega)$, each with radius at least $\alpha/H(x)$, that are tangent to $M$ at $x$ from the interior and exterior, respectively.
\end{defn}

If $H(x)=0$ at some point $x \in M$, then being $\alpha$-noncollapsed implies that $\Omega$ is a half-space. It is noteworthy that any compact, smooth, strictly mean-convex hypersurface is $\alpha$-noncollapsed for some $\alpha>0$. An important result from \cite{sheng2009singularity} and \cite{andrews2012noncollapsing} states that the $\alpha$-noncollapsing condition is preserved under mean curvature flow.

By considering the viscosity mean curvature (cf. \cite[Definition 1.3]{haslhofer2017mean1}), the definition of $\alpha$-noncollapsed can be extended to any mean-convex closed set (cf. \cite[Definition 1.4]{haslhofer2017mean1}). We say a level set flow $\mathcal M=\{M_t\}_{t \ge 0}$ is $\alpha$-noncollapsed if each $\Omega_t$, enclosed by $M_t$, is $\alpha$-noncollapsed. In this framework, Haslhofer and Kleiner \cite[Theorem 1.5]{haslhofer2017mean1} established the following result:

\begin{thm}
A level set flow $\mathcal M=\{M_t\}_{t \ge 0}$ starting from a closed, smooth, mean-convex hypersurface $M_0$ is $\alpha$-noncollapsed if $M_0$ is $\alpha$-noncollapsed.
\end{thm}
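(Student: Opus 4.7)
The strategy combines three ingredients: (i) the preservation of $\alpha$-noncollapsing under \emph{smooth} mean curvature flow, due to Sheng-Wang and Andrews; (ii) the stability of the level set flow under Hausdorff convergence of initial data, which follows from the characterization of the level set flow as the maximal family satisfying the avoidance principle; and (iii) the fact that $\alpha$-noncollapsing, in the viscosity sense of \cite[Definition 1.4]{haslhofer2017mean1}, is a closed condition under Hausdorff limits of mean-convex closed sets.

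The plan is as follows. First, perturb $M_0$ slightly inward along its outward unit normal to produce a sequence of smooth, closed, strictly mean-convex hypersurfaces $M_0^k$ with $H>0$ everywhere, satisfying $M_0^k \to M_0$ in $C^\infty$ and each $M_0^k$ being $\alpha_k$-noncollapsed for some $\alpha_k \nearrow \alpha$. For each $k$, the classical smooth mean curvature flow $\{M_t^k\}_{t \in [0, T_k)}$ exists up to the first singular time $T_k$ and remains $\alpha_k$-noncollapsed on this interval by ingredient (i).

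Next, propagate $\alpha_k$-noncollapsing past singular times for the level set flow $\mathcal{M}^k = \{M_t^k\}_{t \geq 0}$. I would run a continuity argument: let $T^* = \sup\{ T : \mathcal{M}^k \text{ is } \alpha_k\text{-noncollapsed on } [0,T]\}$. Ingredient (iii) forces $\alpha_k$-noncollapsing at $T^*$, since $\Omega_{t}^k \to \Omega_{T^*}^k$ in Hausdorff distance as $t \nearrow T^*$. To extend just past $T^*$, approximate $M_{T^*}^k$ (now merely a mean-convex closed set) by smooth $\alpha_k$-noncollapsed hypersurfaces, apply (i) to their classical MCF, and pass back via (ii) to conclude $\mathcal{M}^k$ is $\alpha_k$-noncollapsed on a slightly larger interval, contradicting maximality unless $T^* = +\infty$. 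Finally, let $k \to \infty$: by (ii), $\Omega_t^k \to \Omega_t$ in Hausdorff distance for every $t \geq 0$, and (iii) then gives that $\Omega_t$ is $\alpha$-noncollapsed for all $t \geq 0$.

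The main obstacle is the propagation across singular times in the second step. Andrews's two-point maximum principle on $Z(x,y,t)/H(x,t)$ provides a clean parabolic argument in the smooth regime, but upgrading this to a statement about the viscosity mean curvature requires that the inscribed and exterior radius functions behave semicontinuously through singularities, which is the essential technical content. An alternative route is via elliptic regularization, approximating the Brakke flow by smooth translators and exploiting the preserved noncollapsing in that approximation.
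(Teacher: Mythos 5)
First, note that the paper does not prove this statement at all: it is quoted verbatim as \cite[Theorem 1.5]{haslhofer2017mean1}, so there is no internal argument to compare yours against; the relevant benchmark is Haslhofer--Kleiner's own proof, which proceeds by Ilmanen's elliptic regularization --- one shows the smooth translating approximators (one dimension up) satisfy the $\alpha$-Andrews condition and then passes this to the limit. That is precisely the route you relegate to a one-sentence ``alternative'' at the end, and it is the one that actually works.

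Your primary argument has a genuine gap at the continuity step across the first singular time $T^*$. The smooth preservation result of Sheng--Wang and Andrews is only available while the flow is smooth, i.e.\ exactly up to $T^*$, and your proposed restart --- ``approximate $M_{T^*}^k$ by smooth $\alpha_k$-noncollapsed hypersurfaces, apply (i), and pass back via (ii)'' --- does not get you past it. Two problems: (a) it is not clear that a singular mean-convex slice admits smooth $\alpha_k$-noncollapsed approximations at all, and the only natural candidates are the earlier slices $M_{T^*-\sigma}^k$, whose level set flows are just the original flow time-shifted, so they develop the same singularity at $T^*$ and the argument is circular; (b) even granting good smooth approximants, ingredient (i) controls their flows only up to \emph{their} first singular times, so to conclude noncollapsing of the limit flow on an interval strictly beyond $T^*$ you would need the approximating level set flows to be noncollapsed past their own singularities --- which is the theorem you are trying to prove. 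Your ingredients (ii) and (iii) (semicontinuity of the level set flow under Hausdorff convergence of mean-convex data, and closedness of the viscosity noncollapsing condition under Hausdorff limits) are fine as far as they go, but they only transfer noncollapsing that has already been established for the approximators; they cannot manufacture it across a singular time. You acknowledge this yourself (``the essential technical content''), but the proposal does not supply it, and without it the open part of the open--closed argument fails. The fix is to abandon the restart scheme and run the elliptic-regularization argument: there the approximating flows are globally smooth (compact translating graphs in $\Omega\times\R$), Andrews's two-point maximum principle applies for all time, and the $\alpha$-Andrews condition survives the limit $\varepsilon\to 0$, which is how the cited theorem is actually proved.
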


In \cite{haslhofer2017mean1}, Haslhofer and Kleiner defined a broader class of flows that includes all $\alpha$-noncollapsed compact level set flows with smooth initial conditions.

\begin{defn}[$\alpha$-Andrews flows]\label{andrewsflow}
The class of \textbf{$\alpha$-Andrews flows} is the smallest class of set flows that contains all $\alpha$-noncollapsed compact level set flows with smooth initial condition, which is closed under operations of restriction to smaller time intervals, parabolic rescaling, and passage to Hausdorff limits.
\end{defn}

For $r>0$, we denote the spacetime dilation by $\mathcal{D}_r:(x, t) \to (rx, r^2 t)$ and define $\mathcal{M}_{X,r}=\mathcal{D}_{1/r}(\mathcal{M}-X)$ for any $\alpha$-Andrews flow $\MM$. For any $X \in \MM$ where $\tf(X)$ is not the initial time, a subsequential limit of $\mathcal M_{X_i, r_i}$ with $X_i \to X$ and $r_i \to 0$ is called a \emph{limit flow} at $X$. In particular, a subsequential limit of $\mathcal M_{X, r_i}$ with $r_i \to 0$ is called a \emph{tangent flow} at $X$. 

It was shown in \cite[Theorem 1.11]{haslhofer2017mean1} that any limit flow is a smooth, convex, ancient solution to the mean curvature flow. Furthermore, each tangent flow is given by either a static plane or shrinking generalized cylinders.

\subsection{Quantitative cylindrical singularities}
We define the moduli space $\mathcal E(n,\alpha, \Lambda)$, consisting of all $n$-dimensional level set flows $\MM=\{M_t\}_{t \ge 0}$ that starts from a closed, smooth, mean-convex, $\alpha$-noncollapsed hypersurface $M_0$ with $\lambda(M_0) \le \Lambda$. Hereafter, all level set flows are assumed to belong to this space unless otherwise specified.

Next, we set $\CC^j$ to be the space consisting of the generalized cylinder
\begin{equation*}
\lc \R^j\times S^{n-j}(\sqrt{2(n-j)|t|}) \rc_{t \le 0}
\end{equation*}
of multiplicity one, together with all its rotations. Then, any tangent flow at $X \in \mathcal S^j \setminus \mathcal S^{j-1}$ belongs to $\CC^j$. The entropy of the flow in $\CC^j$ is denoted by $\Theta_j$. From \cite{Sto94}, we have
\begin{equation}\label{entropycyl}
\sqrt{2}<\Theta_0<\Theta_1<\ldots<\Theta_{n-1}<2.
\end{equation}

Next, we introduce the following definition to describe the local behavior near a given point.

\begin{defn}\label{deficylindrical}
A point $X \in \MM$ is said to be \textbf{$(j,\ep,r)$-cylindrical} if for any $s \in [\ep r, \ep^{-1} r]$, 
\begin{equation*}
(\MM_{X,s})_{\tf=-1} \bigcap B(\vec 0^{n+1},\ep^{-1})
\end{equation*}
is a graph over a fixed cylinder $\Sigma=\R^j\times S^{n-j}(\sqrt{2(n-j)})$ with $C^{[\ep^{-1}]}$-norm at most $\ep$. If the $j$-plane $V$ is the $\R^j$-factor of $\Sigma$, then we say $X$ is $(j,\ep,r)$-cylindrical with respect to $L_Y:=(\ef(Y)+V) \times \{\tf(Y)\}$.
\end{defn}
We have the following quantitative rigidity.
\begin{prop}[Quantitative rigidity]\label{quantitativerigidity}
For any $\ep>0$, there exists a constant $\delta=\delta(n,\alpha, \Lambda,\ep)>0$ such that for $X \in \MM \in \mathcal E(n,\alpha, \Lambda)$ and $r>0$, if $\tf(X)>\delta^{-1} r^2$ and
\begin{equation*}
\Theta(\mathcal{M}, X, \delta^{-1} r^2)-\Theta(\mathcal{M}, X, \delta r^2 )\leq \delta^2,
\end{equation*}
then $X$ is $(j,\ep,r)$-cylindrical for some $j\in\{0,1,\ldots,n\}$.
\end{prop}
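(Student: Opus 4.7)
The plan is to prove this by a standard contradiction/compactness argument. Suppose the statement fails for some fixed $\ep_0 > 0$. Then there exist sequences $\delta_i \to 0$, flows $\MM_i \in \mathcal{E}(n, \alpha, \Lambda)$, spacetime points $X_i \in \MM_i$, and scales $r_i > 0$ with $\tf(X_i) > \delta_i^{-1} r_i^2$ such that
\begin{equation*}
\Theta(\MM_i, X_i, \delta_i^{-1} r_i^2) - \Theta(\MM_i, X_i, \delta_i r_i^2) \le \delta_i^2,
\end{equation*}
yet $X_i$ is not $(j, \ep_0, r_i)$-cylindrical for any $j \in \{0, 1, \ldots, n\}$. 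Parabolically rescaling by $r_i$ and translating $X_i$ to the spacetime origin gives $\td{\MM}_i := \MM_{i, X_i, r_i}$. Both $\alpha$-noncollapsedness and the entropy bound $\Lambda$ are scale-invariant, so each $\td{\MM}_i$ is an $\alpha$-Andrews flow with $\lambda(\td{\MM}_i) \le \Lambda$ defined on a time interval eventually containing every compact subset of $\spacetime$.

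\textbf{Extracting and classifying the limit.} I would next invoke compactness of $\alpha$-Andrews flows (Haslhofer-Kleiner) to extract a subsequential Brakke limit $\td{\MM}_\infty$ on all of $\spacetime$ with $\lambda(\td{\MM}_\infty) \le \Lambda$. By Huisken's monotonicity formula (Proposition \ref{prop:mono}) and the near-constancy of density ratios between the scales $\delta_i$ and $\delta_i^{-1}$, the limit function $\tau \mapsto \Theta(\td{\MM}_\infty, \vec{0}^{n+1,1}, \tau)$ is constant on $(0, \infty)$. The equality case of Huisken's monotonicity forces $\td{\MM}_\infty$ to be self-similarly shrinking with respect to $\vec{0}^{n+1,1}$. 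Since $\td{\MM}_\infty$ is still mean-convex and $\alpha$-noncollapsed with entropy at most $\Lambda$, the classification of mean-convex self-shrinkers (Huisken, Colding-Minicozzi, Haslhofer-Kleiner) forces $\td{\MM}_\infty$ to be a generalized cylinder in $\CC^j$ for some $j \in \{0, 1, \ldots, n\}$, with $j = n$ corresponding to a static multiplicity-one hyperplane.

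\textbf{Smooth upgrade and conclusion.} Since $\td{\MM}_\infty$ is smooth on every negative time slice with uniformly controlled geometry at unit scale, Brakke's local regularity theorem — or equivalently the local regularity for $\alpha$-Andrews flows due to Haslhofer-Kleiner — upgrades the Brakke convergence $\td{\MM}_i \to \td{\MM}_\infty$ to smooth convergence in $C^k$, for every $k$, on any compact subset of $\spacetime \setminus \{\tf = 0\}$. By the exact self-similarity of the limit, this convergence is uniform across the full scale window $s \in [\ep_0, \ep_0^{-1}]$. Unwinding Definition \ref{deficylindrical}, we conclude that for all sufficiently large $i$, the point $X_i$ is $(j, \ep_0, r_i)$-cylindrical, contradicting the standing assumption.

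\textbf{Main obstacle.} The delicate ingredient is precisely this smooth upgrade: the definition of $(j, \ep, r)$-cylindrical demands convergence in $C^{[\ep_0^{-1}]}$-norm on a ball of radius $\ep_0^{-1}$, uniformly across all intermediate scales. Passing from weak (varifold/Brakke) convergence to smooth graphical control requires the full strength of the local regularity theory available in the mean-convex, $\alpha$-noncollapsed setting, and ensuring that the smoothness holds simultaneously at every rescaled scale $s \in [\ep_0, \ep_0^{-1}]$ is where the self-similarity of $\td{\MM}_\infty$ is essential.
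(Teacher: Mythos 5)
Your proposal is correct and follows essentially the same route as the paper's proof: a contradiction/compactness argument in which the rescaled flows converge to an ancient $\alpha$-Andrews limit, the density pinching plus Huisken's monotonicity forces the limit to be a self-shrinker classified as a generalized cylinder (or static plane) via Haslhofer--Kleiner, and the local regularity theorem upgrades the convergence to smooth convergence on compact subsets of $\{\tf<0\}$, contradicting the assumed failure of cylindricality. The only difference is cosmetic ordering (the paper establishes smooth convergence before identifying the limit as a self-shrinker), so no further comment is needed.
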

\begin{proof}
Without loss of generality, we assume $r=1$. Suppose that the conclusion fails. Then we can find $\MM^k \in \mathcal E(n,\alpha, \Lambda)$, $\delta_k\leq k^{-1}$ and $X_k\in\MM^k$ such that 
\begin{equation}\label{gaussianpinch1}
\Theta(\MM^k,X_k,\delta_k^{-1})-\Theta(\MM^k,X_k, \delta_k)\leq\delta^2_k,
\end{equation}
but $\MM^k$ is not $(j,\ep,1)$-cylindrical for any $j\in\{0,1,\ldots,n\}$.

By taking a subsequence, we assume $\MM^k_{X_k,1}\to \MM^\infty$ in the Hausdorff sense. The limit $\MM^\infty$ is an ancient $\alpha$-Andrews flow as defined in Definition \ref{andrewsflow}. By \cite[Theorem 1.11]{haslhofer2017mean1}, $\MM^\infty$ is smooth before the extinction time $T_0$. Note that $\vec 0^{n+1,1}\in \MM^\infty$, we have $T_0\geq0$. By the local regularity theorem of \cite{white2005local} (see also \cite[Appendix C]{haslhofer2017mean1}), we conclude that $\MM^k_{X_k,1}$ converges to $\MM^\infty$ smoothly on $(-\infty,T_0)$. 

Since $\lambda(\MM^k)\leq \Lambda$, it follows from the convergence of Gaussian densities and \eqref{gaussianpinch1} that 
$$\Theta(\MM^{\infty}, \vec{0}^{n+1,1}, \tau)$$
is constant for any $\tau>0$. Therefore, by Proposition \ref{prop:mono}, we conclude that $\MM^{\infty}$ is a self-shrinker. In particular, $T_0=0$, and by \cite[Theorem 1.11]{haslhofer2017mean1}, $\MM^\infty$ must belong to $\CC^j$ for some $j\in\{0,1,\ldots,n\}$. This yields a contradiction.
\end{proof}

For the quantitative singular stratum $\MS^j_{\ep, r}$ (see Definition \ref{def:sin1}), we immediately get that if $j\leq j',\ep\geq\ep',r\leq r'$, then
\begin{equation*}
\MS^{j}_{\ep,r}\subset \MS^{j'}_{\ep',r'}.
\end{equation*}
Moreover, we have the following characterization of $\MS^j$:
\begin{prop} \label{prop:decom1}
With the above definitions, we have
\begin{equation}\label{quansingandsing}
\MS^j=\bigcup_{\ep>0} \mathcal{S}^{j}_{\epsilon}=\bigcup_{\ep>0}\bigcap_{r\in (0,1)}\MS^{j}_{\ep,r}.
\end{equation}
\end{prop}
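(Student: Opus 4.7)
The plan is to establish the identity $\MS^j = \bigcup_{\ep>0}\MS^j_\ep$, since the second equality in \eqref{quansingandsing} is just the definition of $\MS^j_\ep$. I would prove the two inclusions separately.

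To prove $\bigcup_\ep \MS^j_\ep \subseteq \MS^j$, I would fix $X \in \MS^j_\ep$ and aim to show that $X$ is singular with every tangent flow in $\CC^{j'}$ for some $j' \le j$. First I would rule out the possibility that $X$ is a smooth point of $\MM$: at a smooth point, the local regularity theorem of \cite{white2005local} (see also \cite[Appendix C]{haslhofer2017mean1}) yields smooth convergence of $\MM_{X,s}$ to a static multiplicity-one plane as $s \to 0$, which would make $X$ be $(n,\ep,s)$-cylindrical for every sufficiently small $s$, contradicting $X \in \MS^j_\ep$ since $n > j$. Assuming next that some tangent flow at $X$ lies in $\CC^{j'}$ with $j' > j$, I would exploit the self-similarity of the cylinder: if $\MM_{X,r_k}$ converges smoothly on compact parabolic sets to the cylinder, the same smooth convergence at times $\tf = -c^2$ combined with a rescaling by $c^{-1}$ shows $(\MM_{X,cr_k})_{\tf=-1}$ converges to $\R^{j'} \times S^{n-j'}(\sqrt{2(n-j')})$ uniformly in $c \in [\ep,\ep^{-1}]$. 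This would make $X$ be $(j',\ep,r_k)$-cylindrical for $k$ large, contradicting $X \in \MS^j_\ep$ because $r_k \to 0 \in (0,1)$.

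For the reverse inclusion $\MS^j \subseteq \bigcup_\ep \MS^j_\ep$, I would argue by contradiction. Assume $X \in \MS^j$ fails to lie in $\MS^j_{\ep_k}$ for $\ep_k = 1/k$; then for each $k$ there exist $s_k \in (0,1)$ and $j'_k > j$ such that $X$ is $(j'_k,\ep_k,s_k)$-cylindrical, and after passing to a subsequence I may assume $j'_k \equiv j'$ is constant. Choosing the scale $s'_k := \ep_k s_k$, which lies in the admissible interval $[\ep_k s_k, \ep_k^{-1} s_k]$ and satisfies $s'_k \to 0$, the cylindrical condition guarantees that $(\MM_{X,s'_k})_{\tf=-1} \cap B(\vec 0^{n+1}, \ep_k^{-1})$ is $\ep_k$-close in $C^{[\ep_k^{-1}]}$ to the fixed unit cylinder (or plane if $j'=n$). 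Extracting a subsequential tangent flow from $\MM_{X,s'_k}$ (via the compactness of Brakke flows with bounded entropy and \cite[Theorem 1.11]{haslhofer2017mean1}), its $\tf=-1$ slice is forced to be exactly this cylinder/plane on all of $\R^{n+1}$. Huisken's monotonicity (Proposition \ref{prop:mono}) then upgrades the tangent flow to a self-shrinker, which must be the corresponding element of $\CC^{j'}$ when $j' \le n-1$ or a static multiplicity-one plane when $j' = n$. The planar case is excluded by Brakke/White regularity since $X$ is singular, so $j' \in \{j+1,\dots,n-1\}$ and the tangent flow lies in $\CC^{j'}$, contradicting $X \in \MS^j$.

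The main technical point is the upgrade from scale-uniform $C^{[\ep_k^{-1}]}$ closeness on the growing ball $B(\vec 0^{n+1}, \ep_k^{-1})$ to smooth convergence of the rescaled flows on every fixed compact parabolic region, so that one can identify the limit as a genuine tangent flow; this is standard within the Brakke--White regularity framework already used in the proof of Proposition \ref{quantitativerigidity}. Beyond that, no new rigidity input is required: the proposition follows by combining Huisken's monotonicity with the classification of tangent flows as generalized cylinders or planes.
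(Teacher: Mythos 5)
Your proposal is correct and follows essentially the same route as the paper's (very terse) proof: the forward inclusion via White's classification of tangent flows plus smooth convergence of the rescalings to produce $(j',\ep,r)$-cylindricality, and the reverse inclusion by extracting a tangent flow from the nearly-cylindrical scales and identifying it via Huisken's monotonicity and local regularity. Your choice of the scales $s'_k=\ep_k s_k\to 0$ is a clean way to justify the step the paper states without comment (the scales $s_k$ need not tend to zero), but it is a detail within the same argument rather than a different approach.
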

\begin{proof}
If $X\notin \MS^j$, then the tangent flow at $X$ is given by shrinking cylinders in $\CC^{j'}$ for $j'\geq j+1$. Thus, for any $\ep>0$, there exists a small $r>0$ such that $X$ is $(j',\ep,r)$-cylindrical and hence $X\notin \mathcal{S}^{j}_{\epsilon}$. In other words, $X \notin \bigcup_{\ep>0} \mathcal{S}^{j}_{\epsilon}$. 
Conversely, if $X \notin \bigcup_{\ep>0} \mathcal{S}^{j}_{\epsilon}$, for any $k\in\N$, we can find $r_k >0$, such that $X$ is $(j',k^{-1},r_k)$-cylindrical for some $j' > j$. Consequently, $X \notin \MS^j$.
\end{proof}
For the top stratum, we have the following characterization.

\begin{prop}\label{prop:top}
There exists a constant $\ep=\ep(n, \alpha, \Lambda)>0$ such that
\begin{equation*}
\mathcal S^{n-1}_{\ep}=\mathcal S.
\end{equation*}
\end{prop}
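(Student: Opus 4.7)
The plan is to exploit the strict entropy gap $1 < \sqrt{2} < \Theta_0$ from \eqref{entropycyl} together with Huisken's monotonicity formula. I interpret Definition \ref{deficylindrical} with $j' = n$ (which is the only $j' > n-1$ that could be relevant in Definition \ref{def:sin1}) as the degenerate ``cylinder'' $\R^n \times S^0(\sqrt{0})$, i.e.\ a multiplicity-one hyperplane; so being $(n,\ep,s)$-cylindrical means the rescaled slice is graphical over a hyperplane.

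The inclusion $\MS^{n-1}_\ep \subset \MS$ holds for every $\ep>0$: if $X\in\MM$ is a regular point, then the flow is a smooth hypersurface near $\ef(X)$ at time $\tf(X)$, so $\MM_{X,s'}$ converges smoothly to a static tangent hyperplane as $s'\to 0^+$. Hence $X$ is $(n,\ep,s)$-cylindrical for all sufficiently small $s$, and consequently $X \notin \bigcap_{r\in(0,1)} \MS^{n-1}_{\ep,r} = \MS^{n-1}_\ep$.

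For the reverse inclusion $\MS \subset \MS^{n-1}_\ep$, I would argue by contradiction. Suppose $X\in\MS$ but $X\notin\MS^{n-1}_\ep$; then there exists $s\in(0,1)$ such that $X$ is $(n,\ep,s)$-cylindrical, i.e., $(\MM_{X,s})_{\tf=-1}\cap B(\vec 0^{n+1},\ep^{-1})$ is a $C^{[\ep^{-1}]}$-graph over a hyperplane with norm at most $\ep$. Rewriting the Gaussian density ratio in rescaled coordinates gives
\[
\Theta(\MM, X, s^2) = \int \Phi_{\vec 0^{n+1,1}}(z,-1)\, d\mu^{X,s}_{-1}(z),
\]
which I would estimate by splitting the integration into $B(\vec 0^{n+1},\ep^{-1})$ and its complement. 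On the ball, the $C^0$-closeness to a hyperplane yields a contribution at most $1+\Psi(\ep)$; on the exterior, the entropy bound $\lambda(\MM)\le\Lambda$ combined with the Gaussian decay of $\Phi$ gives a tail bound of size $\Psi(\ep\mid n,\Lambda)$. Thus $\Theta(\MM,X,s^2)\le 1+\Psi(\ep\mid n,\Lambda)$, and by monotonicity (Proposition \ref{prop:mono}),
\[
\Theta(\MM,X) = \lim_{\tau\to 0^+}\Theta(\MM,X,\tau) \le \Theta(\MM,X,s^2) \le 1+\Psi(\ep\mid n,\Lambda).
\]
On the other hand, since $X\in\MS$ and $\MM\in\mathcal{E}(n,\alpha,\Lambda)$, the tangent flow at $X$ is a multiplicity-one generalized cylinder by White \cite{white2003nature,white2015subsequent}, so $\Theta(\MM,X)=\Theta_j\ge\Theta_0>\sqrt{2}$ by \eqref{entropycyl}. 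Choosing $\ep=\ep(n,\alpha,\Lambda)$ small enough that $1+\Psi(\ep)<\sqrt{2}$ produces the contradiction.

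The main technical point is the tail estimate for the Gaussian integral over $\{|z|>\ep^{-1}\}$, which requires combining the entropy bound $\lambda(\MM)\le\Lambda$ with the rapid decay of the heat kernel to ensure the contribution vanishes uniformly as $\ep\to 0$ over the class $\mathcal{E}(n,\alpha,\Lambda)$; the remaining ingredients follow directly from monotonicity and the classification of tangent flows at singular points in the mean-convex setting.
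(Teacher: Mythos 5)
Your proof is correct, and it takes a genuinely different route from the paper's. The paper argues by contradiction and compactness over the moduli space $\mathcal E(n,\alpha,\Lambda)$: if singular points $X_k\in\MM^k$ were $(n,\ep_k,r_k)$-cylindrical with $\ep_k\to 0$, the rescaled flows $\MM^k_{X_k,r_k}$ would converge to a static plane, and White's local regularity theorem would force $X_k$ to be regular for large $k$, a contradiction. You instead give a direct, quantitative argument: graphicality over a hyperplane at one scale $s$, combined with the entropy bound (uniform area ratios giving the Gaussian tail estimate), forces $\Theta(\MM,X,s^2)\le 1+\Psi(\ep)$; Huisken monotonicity propagates this down to the density $\Theta(\MM,X)$; and the density gap at singular points, $\Theta(\MM,X)=\Theta_j\ge\Theta_0>\sqrt 2$ from White's classification of tangent flows as multiplicity-one cylinders together with \eqref{entropycyl}, yields the contradiction once $1+\Psi(\ep)<\sqrt2$. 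Your route avoids both the compactness of the class and the $\ep$-regularity theorem, and in fact produces $\ep$ depending only on $n$ and $\Lambda$; the price is reliance on the tangent-flow classification/density gap at singular points, whereas the paper's softer argument needs only Brakke--White local regularity plus the Haslhofer--Kleiner convergence theory it already uses elsewhere. You also make the easy inclusion $\MS^{n-1}_{\ep}\subset\MS$ explicit, which the paper leaves implicit. The two points to write out carefully are exactly the ones you flag: the tail bound (entropy $\Rightarrow$ area ratios $\le C(n)\Lambda$, then dyadic summation of the Gaussian weight outside $B(\vec 0^{n+1},\ep^{-1})$) and the interior bound (the graph lies over a plane through the origin, the graph points satisfy $|x+u\mathbf n|\ge|x|$, and the area element is at most $(1+\Psi(\ep))$, so the weighted area is at most $1+\Psi(\ep)$); both are routine.
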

\begin{proof}
Suppose, for contradiction, that there exists a sequence $\MM^k \in \mathcal E(n,\alpha, \Lambda)$, with $\ep_k \to 0$ and $X_k \in \MM^k$ such that $X_k \in \mathcal S^{n-1}(\MM^k) \setminus \mathcal S^{n-1}_{\ep_k}(\MM^k)$. By Definition \ref{def:sin1}, there exists a sequence $r_k>0$ such that $X_k$ is $(n, \ep_k, r_k)$-cylindrical. Consequently, $\MM_{X_k, r_k}$ converges, after taking a subsequence if necessary, to the static plane. By the local regularity theorem of \cite{white2005local}, it follows that $X_k$ is a regular point if $k$ is sufficiently large, leading to a contradiction.
\end{proof}

For later applications, we need the following notion of an independent set, which ensures that a set does not concentrate on a lower-dimensional plane.

\begin{defn}[Independent set]\label{independentpoints}
For $x\in \espace$, we say a subset $S\subset B(x,r)\subset\espace$ is \textbf{$(j,\beta,r)$-independent} in $B(x,r)\subset\espace$ if for any affine $(j-1)$-plane $V$, there exists some $y\in S$ such that
$$|y-V|\geq \beta r.$$
For $X\in\spacetime$, we say $S\subset\pball(X,r)\subset\spacetime$ is $(j,\beta,r)$-independent in $\pball(X,r)$ if the projection of $S$ onto $\espace\times\{ \tf(X)\}$ is $(j,\beta,r)$-independent in $B(\ef(X),r)$. 
\end{defn}

\section{Quantitative uniqueness of cylindrical singularities}

In this section, we prove a quantitative uniqueness theorem for tangent flows at a point $X \in \MM$. The main ideas and techniques are from \cite{colding2015uniqueness} and \cite{colding2018wondering}, see also \cite[Theorem 0.5]{colding2025quantitative} for a similar result.

We begin by considering an open subset $\Omega \subset \mathbb{R}^j\times S^{n-j}(\sqrt{2(n-j)})$ and a family of graphs $\Omega_{u(t)}$ over $\Omega$ for $t \in I$. Here, 
\begin{equation*}
\Omega_{u(t)}:=\{x+u(x,t)\mathbf{n}(x) \mid x \in \Omega\}.
\end{equation*}

We assume that $\Omega_{u(t)}$ evolves by the rescaled MCF for $t \in I$, which is given by the equation:
\begin{equation*}
(\partial_t x)^{\perp}=\mathbf{H}+\frac{x^{\perp}}{2}.
\end{equation*}

Next, we state the following lemma from \cite[Lemma A.48]{colding2015uniqueness}.

\begin{lem}\label{graphicalcontrol}
With above assumptions, there exist positive constants $C$ and $\ep$, both depending only on $n$, such that if $\rVert u(\cdot,t)\rVert_{C^1}<\epsilon, \forall t\in [t_1,t_2] \subset I$, then
\begin{equation*}
\int_{\Omega}|u(x,t_1)-u(x,t_2)|e^{-\frac{|x|^2}{4}} \, dx\leq C\int_{t_1}^{t_2}\int_{\Omega_{u(t)}} \left| \mathbf{H}+\frac{x^{\perp}}{2} \right |e^{-\frac{|x|^2}{4}} \,dxdt.
\end{equation*}
\end{lem}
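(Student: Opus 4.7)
The plan is to derive an evolution equation for $u$ from the rescaled MCF, use the fundamental theorem of calculus in $t$, and then convert the cylinder integral into a graph integral using the smallness of $\|u\|_{C^1}$.

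First, I would parametrize the graph as $y(x,t) = x + u(x,t)\mathbf{n}(x)$, where $\mathbf{n}(x)$ is the cylinder's unit normal at $x \in \Omega$. Since $\mathbf{n}(x)$ is $t$-independent, $\partial_t y = (\partial_t u)\mathbf{n}(x)$. The rescaled MCF equation applied to $y(x,t)$ says that the normal component of $\partial_t y$ equals the vector $\mathbf{H}(y) + y^{\perp}/2$. Projecting onto the normal $\mathbf{n}_{\Sigma_t}(y)$ of the graph $\Sigma_t = \Omega_{u(t)}$ gives the scalar identity
\begin{equation*}
\partial_t u \cdot \langle \mathbf{n}(x), \mathbf{n}_{\Sigma_t}(y)\rangle = \left\langle \mathbf{H}(y) + \tfrac{y^{\perp}}{2}, \mathbf{n}_{\Sigma_t}(y) \right\rangle = \pm\left|\mathbf{H}(y) + \tfrac{y^{\perp}}{2}\right|.
\end{equation*}
Since $\|u(\cdot,t)\|_{C^1} < \epsilon$ is small, $\langle \mathbf{n}(x), \mathbf{n}_{\Sigma_t}(y)\rangle \geq 1/2$, so
\begin{equation*}
|\partial_t u(x,t)| \leq 2 \left|\mathbf{H}(y(x,t)) + \tfrac{y(x,t)^{\perp}}{2}\right|.
\end{equation*}

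Next, I would apply the fundamental theorem of calculus pointwise in $x$, obtaining
\begin{equation*}
|u(x,t_1)-u(x,t_2)| \leq \int_{t_1}^{t_2} |\partial_t u(x,t)|\, dt \leq 2\int_{t_1}^{t_2} \left|\mathbf{H}(y(x,t)) + \tfrac{y(x,t)^{\perp}}{2}\right| dt.
\end{equation*}
Multiplying by $e^{-|x|^2/4}$, integrating over $\Omega$, and applying Fubini to swap the $x$ and $t$ integrals, I reduce the statement to showing that for each fixed $t$,
\begin{equation*}
\int_{\Omega} \left|\mathbf{H}(y(x,t)) + \tfrac{y(x,t)^{\perp}}{2}\right| e^{-|x|^2/4}\, dx \leq C \int_{\Omega_{u(t)}} \left|\mathbf{H} + \tfrac{x^{\perp}}{2}\right| e^{-|x|^2/4}\, dx.
\end{equation*}

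This final inequality is obtained by the change of variables $x \mapsto y(x,t)$ from $\Omega$ to $\Omega_{u(t)}$. The Jacobian of this map is pinched near $1$ by the $C^1$ smallness of $u$. For the Gaussian weight, I would use that for $x$ on the cylinder $\mathbb{R}^j \times S^{n-j}(\sqrt{2(n-j)})$ one has $x \cdot \mathbf{n}(x) = \sqrt{2(n-j)}$, so
\begin{equation*}
|y|^2 = |x|^2 + 2\sqrt{2(n-j)}\, u + u^2,
\end{equation*}
which differs from $|x|^2$ by a uniformly bounded quantity when $\|u\|_{C^0} < \epsilon$. Hence $e^{-|x|^2/4}$ and $e^{-|y|^2/4}$ are comparable up to a dimensional constant, and the change of variables absorbs into a universal $C=C(n)$.

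The main technical point is simply bookkeeping the change of variables and checking that the Gaussian weights stay comparable despite $|x|$ being unbounded in the $\mathbb{R}^j$ factor; the key observation that rescues this is the bounded radial component $x \cdot \mathbf{n}(x)$ along the cylinder. Everything else is a direct consequence of the rescaled MCF equation and the $C^1$ smallness hypothesis.
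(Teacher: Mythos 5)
Your proposal is correct, and it is essentially the argument behind the result the paper cites: the paper gives no proof of this lemma, quoting it directly from \cite[Lemma A.48]{colding2015uniqueness}, and the proof there runs along the same lines you describe (normal speed of the graph equals $\langle \mathbf{H}+\tfrac{x^{\perp}}{2},\mathbf{n}_{\Sigma_t}\rangle$ up to the angle factor $\langle\mathbf{n},\mathbf{n}_{\Sigma_t}\rangle$ bounded below by $C^1$-smallness, then fundamental theorem of calculus, Fubini, and a change of variables in which the Jacobian is pinched near $1$ and the Gaussian weights are comparable because $\langle x,\mathbf{n}(x)\rangle$ is bounded on the cylinder). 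Your observation about the bounded radial component rescuing the weight comparison despite the unbounded $\R^j$-factor is exactly the relevant point.
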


Now, we prove the main result of this section. 

\begin{thm}[Quantitative uniqueness] \label{quantitativeuniq}
For any constant $\ep>0$, suppose that $\delta \le \delta(n,\alpha, \Lambda, \ep)$, $\tau_1\leq \delta^2 \tau_2$ and
\begin{equation} \label{eq:assu1}
\Theta(X, \tau_2)-\Theta(X,\tau_1)\leq \delta^2,
\end{equation}
for a point $X \in \MM \in \mathcal E(n,\alpha, \Lambda)$. Then, there exists a $j\in\{0,1,\ldots,n\}$ and a cylinder $\Sigma=\R^j\times S^{n-j}(\sqrt{2(n-j)})$ such that for any $r \in [\ep (\delta^{-1} \tau_1)^{\frac 1 2},\ep^{-1} (\delta \tau_2)^{\frac 1 2}]$, 
\begin{equation*}
(\MM_{X,r})_{\tf=-1} \bigcap B(\vec 0^{n+1},\ep^{-1})
\end{equation*}
is a graph over $\Sigma$ with $C^{[\ep^{-1}]}$-norm at most $\ep$.

In particular, there exists a fixed $j$-plane $V \subset \R^{n+1}$ such that $X$ is $(j, \ep, r)$-cylindrical with respect to $L_X:=(\ef(X)+V) \times \{\tf(X)\}$ for any $r \in [(\delta^{-1} \tau_1)^{\frac 1 2},(\delta \tau_2)^{\frac 1 2}]$.
\end{thm}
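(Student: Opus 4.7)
The plan is to combine the quantitative rigidity Proposition \ref{quantitativerigidity} with the \L ojasiewicz--Simon uniqueness machinery of Colding--Minicozzi \cite{colding2015uniqueness,colding2018wondering}, phrased in the language of the rescaled mean curvature flow. After translating $X$ to $\vec{0}^{n+1,1}$ and passing to the rescaled flow $\tilde{\mathcal{M}}_{s}$ with $s=-\log(-t)$, Huisken's monotonicity (Proposition \ref{prop:mono}) recasts the hypothesis \eqref{eq:assu1} as
\begin{equation*}
\int_{s_{2}}^{s_{1}}\!\!\int\left|\mathbf{H}+\tfrac{x^{\perp}}{2}\right|^{2}e^{-|x|^{2}/4}\,d\tilde\mu_{s}\,ds\;\le\;\delta^{2},
\end{equation*}
with $s_{i}=-\log\tau_{i}$ and $s_{1}-s_{2}=\log(\tau_{2}/\tau_{1})\ge 2\log(1/\delta)$.

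First I would choose $\delta\le\ep^{2}\bar\delta(n,\alpha,\Lambda,\ep)$, where $\bar\delta$ is the constant of Proposition \ref{quantitativerigidity}, so that at every $r$ in the stated range the sub-interval $[\bar\delta r^{2},\bar\delta^{-1}r^{2}]$ lies inside $[\tau_{1},\tau_{2}]$. Monotonicity combined with the global pinching then forces $\Theta(X,\bar\delta^{-1}r^{2})-\Theta(X,\bar\delta r^{2})\le\bar\delta^{2}$, so Proposition \ref{quantitativerigidity} yields $(j(r),\ep,r)$-cylindricity for some $j(r)\in\{0,\ldots,n\}$. The strict ordering of cylindrical entropies \eqref{entropycyl} $\Theta_{0}<\Theta_{1}<\cdots<\Theta_{n-1}$, together with the near-constancy of $\Theta(X,\cdot)$ throughout $[\tau_{1},\tau_{2}]$, forces $j(r)\equiv j$ to be a single index; thus $(\MM_{X,r})_{\tf=-1}\cap B(\vec 0^{n+1},\ep^{-1})$ is a $C^{[\ep^{-1}]}$-graph of norm at most $\ep$ over \emph{some} cylinder $\Sigma(r)=\R^{j}\times S^{n-j}(\sqrt{2(n-j)})$ with axis $V(r)$.

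The heart of the proof is showing that $V(r)$ can be chosen independent of $r$. Fix a base scale $r_{0}$ in the middle of the range with $s_{0}=-\log r_{0}^{2}$. For any other scale $r$ with $s=-\log r^{2}$, Lemma \ref{graphicalcontrol} applied to the rescaled flow on the interval joining $s_{0}$ and $s$ (whose $C^{1}$-smallness hypothesis is secured by the cylindricity just established) bounds the weighted $L^{1}$ difference of the graph functions $u(\cdot,s_{0})$ and $u(\cdot,s)$ by $\int\!\!\int|\mathbf{H}+x^{\perp}/2|e^{-|x|^{2}/4}\,d\tilde\mu_{s'}\,ds'$. A crude Cauchy--Schwarz gives $\sqrt{|s-s_{0}|}\cdot\delta$, which by itself suffices only if $|s-s_{0}|\cdot\delta^{2}\to 0$. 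The bound is refined via the \L ojasiewicz--Simon inequality for cylindrical shrinkers of Colding--Minicozzi, of the form $F(u)-\Theta_{j}\le C\,\|\mathbf{H}+x^{\perp}/2\|^{2-\mu}$ in the weighted $L^{2}$-norm with some $\mu\in[0,1)$; fed into the $F$-monotonicity, it shows that the rescaled flow spends most of its $s$-time polynomially close to a single cylinder, and a dyadic partition of $[s_{2},s_{1}]$ turns the total weighted $L^{1}$ drift of $u(\cdot,s)$ from $u(\cdot,s_{0})$ into a geometric series summing to $\Psi(\delta)$. Standard interior parabolic regularity on slightly enlarged time windows, using the uniform graphical control from the first step, upgrades the weighted $L^{1}$-closeness to $C^{[\ep^{-1}]}$-closeness. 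Consequently $V(r)$ and $V(r_{0})$ differ by $\Psi(\delta)$, and setting $\Sigma:=\Sigma(r_{0})$, $V:=V(r_{0})$ yields a single cylinder working uniformly in $r$ once $\delta$ is small enough.

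The main obstacle is precisely the axis-drift control over the whole scale range, whose $s$-length $\log(\tau_{2}/\tau_{1})$ is not bounded a priori in $\delta$. A bare Cauchy--Schwarz loses a $\sqrt{\log(\tau_{2}/\tau_{1})}$ factor, insufficient when the logarithm is large; the \L ojasiewicz--Simon inequality is exactly the tool that absorbs the long-range contributions into a convergent sum. Once the axis is pinned down, the $C^{[\ep^{-1}]}$-graph representation over the \emph{fixed} $\Sigma$ follows immediately from the $(j,\ep,r)$-cylindricity at each individual scale combined with the closeness of $\Sigma(r)$ to $\Sigma$.
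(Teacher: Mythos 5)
Your proposal follows essentially the same route as the paper's proof: quantitative rigidity at every scale plus discreteness of the cylindrical entropies to fix $j$, then the Colding--Minicozzi {\L}ojasiewicz inequality converted into polynomial decay of the $F$-excess, a weighted/dyadic summation that makes the total weighted $L^1$ graphical drift $\Psi(\delta)$ via Lemma \ref{graphicalcontrol}, and interpolation to upgrade this to $C^{[\ep^{-1}]}$-closeness of the reference cylinders. The only details the paper supplies that you elide are the open--closed (maximal $\bar s$) argument that justifies applying Lemma \ref{graphicalcontrol} with respect to one \emph{fixed} cylinder $\Sigma_{s_1}$ throughout the interval, and the splitting of $[s_1,s_2]$ at the last time with $F(M'_s)\ge\Theta_j$ before running the decay claim.
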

\begin{proof}
In the proof, we define $B_i:=B(\vec{0}^{n+1}, i \ep^{-1})$ for $i \in \mathbb N$. We set $X=(x_0,t_0)$ and consider the corresponding rescaled MCF:
\begin{equation*}
M'_{s}:= e^{\frac{s}{2}}(M_{t_0-e^{-s}}-x_0), \quad s=-\log (t_0-t).
\end{equation*}
In addition, we set $s_2=-\log (\delta^{-1} \tau_1)$ and $s_1=-\log (\delta \tau_2)$. By our assumption \eqref{eq:assu1} and Proposition \ref{quantitativerigidity}, if $\delta$ is sufficiently small, then for any $s \in [s_1, s_2]$, there exists a cylinder $\Sigma_s$ of the form $\mathbb{R}^j\times S^{n-j}(\sqrt{2(n-j)})$ such that $M'_{z} \bigcap B_4,$ for $z\in [s+4\log \ep,s-4\log \ep]$ can be written as a graph over $\Sigma_{s}$ with $C^{4[\ep^{-1}]}$-norm smaller than $\ep_0$, where $\ep_0$ is a constant much smaller than $\ep$, to be determined later. Here, since the entropies of the generalized cylinders are discrete (see \eqref{entropycyl}), we may assume that $j$ is fixed for all $s \in [s_1,s_2]$.

We assume $s' \in [s_1, s_2]$ is the largest number such that $F(M'_{s}) \ge \Theta_j$ for any $s \in [s_1, s']$. Without loss of generality, we may assume $s'=s_2$, as the general case can be handled by considering the intervals $[s_1,s']$ and $[s', s_2]$ separately.

It follows from \cite{colding2015uniqueness} that the following discrete {\L}ojasiewicz inequality holds: there exists $\eta\in (1/3,1)$ such that for any $s\in [s_1,s_2]$, 
\begin{equation}\label{discretelojasiewicz}
(F(M'_{s})-\Theta_j)^{1+\eta}\leq K \lc F(M'_{s-1})-F(M'_{s+1}) \rc,
\end{equation}
where the constant $K$ depends only on $n$ and $\Lambda$. Next, we set $G(s):=K^{-\frac 1 \eta} (F(M'_{s+s_1})-\Theta_j)$ for $s \in [0,s_2-s_1]$ and define $C_0>0$ by $C_0^{\eta}=8\eta^{-1}$.
\begin{claim}\label{keyclaim}
For any $s \in [0, s_2-s_1]$, we have
\begin{equation}\label{decayofG}
G(s)\leq C_0[G(0)^{-\eta}+s]^{-\frac{1}{\eta}}.
\end{equation}
\end{claim}
\emph{Proof of Claim \ref{keyclaim}}: We first show that \eqref{decayofG} holds for any $s \in [0, \min\{2, s_2-s_1\}]$. Since $G(s)$ is decreasing, this follows from
\begin{equation}\label{decayofGa}
G(0)\leq C_0[G(0)^{-\eta}+2]^{-\frac{1}{\eta}}.
\end{equation}
By our assumption \eqref{eq:assu1} and \eqref{discretelojasiewicz}, $G(0)=\Psi(\delta)$, which ensures that \eqref{decayofGa} holds.

Next, we show that if the inequality \eqref{decayofG} holds for $s-2$, then it also holds for $s$. Suppose not; then we would have
\begin{align}\label{contradictineq}
G(s-2)\leq C_0[G(0)^{-\eta}+s-2]^{-\frac{1}{\eta}} \quad \text{and} \quad G(s) \geq C_0[G(0)^{-\eta}+s]^{-\frac{1}{\eta}}.
\end{align}
By \eqref{discretelojasiewicz}, we have
$$G(s)^{1+\eta}\leq G(s-1)-G(s+1)$$
and
$$G(s)^{1+\eta}\leq G(s-1)^{1+\eta}\leq G(s-2)-G(s).$$
Therefore,
$$G(s)(1+G(s)^{\eta})\leq G(s-2).$$
By \eqref{contradictineq}, we get
\begin{align*}
[G(0)^{-\eta}+s]^{-\frac{1}{\eta}}(1+C_0^{\eta}[G(0)^{-\eta}+s]^{-1})\leq [G(0)^{-\eta}+s-2]^{-\frac{1}{\eta}}.
\end{align*}
After simplification, it is equivalent to
\begin{align} \label{contradictineq2}
(1+C_0^{\eta}x)^{-\eta}\geq 1-2x,
\end{align} 
where $x=[G(0)^{-\eta}+s]^{-1}$. Since $x$ is sufficiently small, we have $(1+C_0^\eta x)^{-\eta}\leq 1-\frac{1}{2}\eta C_0^{\eta}x=1-4x$ by our definition of $C_0$. This contradicts \eqref{contradictineq2}, and the proof of the claim is complete.

It follows from Proposition \ref{prop:mono} that for any $s \in [0, s_2-s_1]$,
\begin{equation*}
\int_{s}^{s_2-s_1}\int_{M'_{s_1+z}}\left| \mathbf H+\frac{x^{\perp}}{2} \right|^2 e^{-\frac{|x|^2}{4}}\,dxdz\leq C[G(0)^{-\eta}+s]^{-\frac{1}{\eta}}.
\end{equation*}
Therefore, one can apply the same argument as in \cite[Lemma 2.39]{colding2018wondering} to conclude that for $\gamma\in (1,\eta^{-1})$, there exists a constant $C=C(n,\Lambda,\gamma,\eta)$ such that
\begin{equation}\label{weightedintegralestimate}
\int_{1}^{s_2-s_1} z^{\gamma}\int_{M'_{s_1+z}}\left| \mathbf H+\frac{x^{\perp}}{2} \right|^2e^{-\frac{|x|^2}{4}}\,dxdz\leq C G(0)^{1-\gamma\eta}.
\end{equation}
Indeed, we have
\begin{align*}
& \int_{1}^{s_2-s_1} z^{\gamma}\int_{M'_{s_1+z}}\left| \mathbf H+\frac{x^{\perp}}{2} \right|^2e^{-\frac{|x|^2}{4}}\,dxdz \\
\leq & \sum_{j=0}^\infty \int_{2^j}^{\min\{s_2-s_1, 2^{j+1}\}} z^{\gamma}\int_{M'_{s_1+z}} \left| \mathbf H+\frac{x^{\perp}}{2} \right|^2e^{-\frac{|x|^2}{4}}\,dxdz\\
\leq & \sum_{j=0}^\infty \int_{2^j}^{\min\{s_2-s_1, 2^{j+1}\}} 2^{\gamma(j+1)}\int_{M'_{s_1+z}} \left| \mathbf H+\frac{x^{\perp}}{2} \right|^2e^{-\frac{|x|^2}{4}}\,dxdz\\
\leq& C\sum_{j=0}^\infty 2^{\gamma (j+1)}[G(0)^{-\eta}+2^j]^{-\frac{1}{\eta}} \le C [1/2+G(0)^{-\eta}]^{\gamma-\eta^{-1}} \leq CG(0)^{1-\eta\gamma}.
\end{align*}
Here, the forth inequality holds since (see \cite[Lemma 2.37]{colding2018wondering}):
\begin{align*}
\sum_{j=0}^\infty 2^{\gamma (j+1)}[G(0)^{-\eta}+2^j]^{-\frac{1}{\eta}}& \leq 4^\gamma\sum_{j=0}^\infty \min_{r\in [2^{j-1},2^j]}r^{\gamma}[G(0)^{-\eta}+r]^{-\frac{1}{\eta}}\\
& \le 4^\gamma \sum_{j=0}^\infty 2^{1-j}\int_{2^{j-1}}^{2^j}r^{\gamma}[G(0)^{-\eta}+r]^{-\frac{1}{\eta}}dr\\
& \le 2 \cdot 4^\gamma \int_{1/2}^\infty [G(0)^{-\eta}+r]^{\gamma-1-\frac{1}{\eta}}dr \leq C [G(0)^{-\eta}+1/2]^{\gamma-\frac{1}{\eta}}.
\end{align*}

Now, we assume $\bar s$ is the largest number in $[s_1,s_2]$ such that for any $s \in [s_1+2\log \ep, \bar s-2\log \ep]$, $M'_s \bigcap B_1$ can be written as a graph $u$ over $\Sigma_{s_1}$ with $C^{[\ep^{-1}]}$-norm at most $\ep$.

If $\bar s < s_2$, we apply Lemma \ref{graphicalcontrol} to get that for any $[t_1,t_2]\subset [s_1,\bar s]$,
\begin{align}\label{L1closenessofgraph}
& \lc \int_{\Sigma_{s_1} \cap B_1}|u(x,t_1)-u(x,t_2)|e^{-\frac{|x|^2}{4}}\,dx \rc^2 \notag \\
\leq & C \lc \int_{t_1}^{t_2}\int_{M'_s}\left| \mathbf H+\frac{x^{\perp}}{2} \right|e^{-\frac{|x|^2}{4}}\,dxds \rc^2\nonumber\\
\leq & C \lc \int_{t_1}^{t_2} (s-s_1)^{-\gamma} \int_{M'_s} e^{-\frac{|x|^2}{4}}\,dx ds \rc \lc \int_{t_1}^{t_2} \int_{M'_s} (s-s_1)^{\gamma} \left| \mathbf H+\frac{x^{\perp}}{2} \right|^2 e^{-\frac{|x|^2}{4}}\,dxds \rc \nonumber\\ 
\leq & C F(M'_{s_1}) \int_{t_1}^{t_2} \int_{M'_s} (s-s_1)^{\gamma} \left| \mathbf H+\frac{x^{\perp}}{2} \right|^2 e^{-\frac{|x|^2}{4}}\,dxds \le CG(0)^{1-\eta\gamma},
\end{align}
where we have used \eqref{weightedintegralestimate} for the last inequality.

Since $M'_{s_1} \bigcap B_4$ is a graph over $\Sigma_{s_1}$ with $C^{4[\ep^{-1}]}$-norm bounded by $\ep_0$, it follows from \eqref{L1closenessofgraph} and interpolation (cf. \cite[Lemma B.1]{colding2015uniqueness}) that if $\delta$ is sufficiently small, $M'_{\bar s} \bigcap B_{0.5}$ is also a graph over $\Sigma_{s_1}$ with $C^{3[\ep^{-1}]}$-norm bounded by $2\ep_0$. Additionally, $M'_{\bar s} \bigcap B_4$ is a graph over $\Sigma_{\bar s}$ with $C^{4[\ep^{-1}]}$-norm bounded by $\ep_0$. Consequently, the Hausdorff distance between $\Sigma_{s_1}$ and $\Sigma_{\bar s}$ in $B_2$ is at most $C_n \ep_0$. Thus, for any $s \in [\bar s+4 \log \ep, \bar s-4\log \ep]$, $M'_s \bigcap B_1$ is a graph over $\Sigma_{s_1}$ with $C^{[\ep^{-1}]}$-norm bounded by $C_n \ep_0$. Since $\ep_0$ is much smaller than $\ep$, this contradicts the definition of $\bar s$, leading us to conclude that $\bar s=s_2$.

In conclusion, we have shown that $M'_s \bigcap B_1$ for $s\in [s_1+2 \log \ep,s_2-2\log \ep]$ can be written as a graph over $\Sigma_{s_1}$ with $C^{[\ep^{-1}]}$-norm bounded by $\ep$. Thus, the proof of the theorem is complete.

\end{proof}

\section{Neck decomposition theorem}
In this section, we consider a level set flow $\MM=\{M_t\}_{t \ge 0} \in \mathcal E(n,\alpha, \Lambda)$. To proceed, we first introduce the following concept of a neck region in our setting, which is similar to those in \cite{jiang20212} and \cite{Cheeger2018RectifiabilityOS}.

\begin{defn}[Neck region]\label{defiofneckregion}
Fix $j\in \{0,1,\ldots,n-1\}, \delta>0, r>0$ and a universal constant $c_n=100^{-10n}$. Given $X\in \MM$ with $\tf(X)>2\delta^{-1} r^2$, we call a subset $\nnn\subset \pball(X,2r)$ a \textbf{$(j,\delta, r)$-neck region} if $\nnn=\pball(X,2r)\setminus \pball_{r_Y}(\ccc)$, where $\ccc\subset \pball(X,2r) \bigcap \MM$ is a closed subset with $r_Y:\ccc\to\mathbb{R}_{\geq 0}$, satisfies:
\begin{itemize}[leftmargin=*, label={}]
\item \emph{(n1)} $\{\pball(Y,c_n^3 r_Y)\}_{Y\in\ccc}$ are pairwisely disjoint;
\item \emph{(n2)} for all $Y\in\ccc$,
$$\Theta(Y, \delta^{-1} r^2)-\Theta(Y, \delta r_Y^2)<\delta^2;$$
\item \emph{(n3)} for any $Y \in \ccc$, there exists a $j$-plane $V_Y$ such that for all $r_Y\leq s\leq c_n^{-3} r$, $Y$ is $(j,\delta, s)$-cylindrical with respect to $L_Y= (V_Y+\ef(Y)) \times \{\tf(Y)\}$ (cf. Definition \ref{deficylindrical});
\item \emph{(n4)} for all $Y\in\ccc$ and $s \ge r_Y$ with $\pball(Y,2s)\subset \pball(X, 2r)$, we have 
\begin{equation*}
L_Y \bigcap \pball(Y,s)\subset \pball_{c_ns}(\ccc) \quad \text{and} \quad \ccc\bigcap \pball(Y,s)\subset \pball_{c_ns}(L_Y).
\end{equation*}
\end{itemize}
Here, $\ccc$ is called the \textbf{center of the neck region}, and $r_Y$ is referred to as the \textbf{radius function}. We decompose $\ccc=\ccc_{+}\bigcup \ccc_0$, where $r_Y>0$ on $\ccc_+$ and $r_Y=0$ on $\ccc_0$. In addition, we use the notation
\begin{equation*}
\pball_{r_Y}(\ccc):=\ccc_0 \bigcup \bigcup_{Y \in \ccc_+} \pball(Y, r_Y).
\end{equation*}
\end{defn}

The following lemma follows directly from the Definition \ref{defiofneckregion} (n1).
\begin{lem}
The radius function $r_Y$ is a Lipschitz function with Lipschitz constant at most $c_n^{-3}$.
\end{lem}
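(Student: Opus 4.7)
The Lipschitz bound is essentially a direct unpacking of the disjointness property (n1). Fix two points $Y, Z \in \ccc$ and assume without loss of generality that $r_Y \geq r_Z$; the goal reduces to proving $r_Y - r_Z \leq c_n^{-3}\|Y - Z\|$. In the main case $r_Y \geq r_Z > 0$, property (n1) forces the parabolic balls $\pball(Y, c_n^3 r_Y)$ and $\pball(Z, c_n^3 r_Z)$ to be disjoint. Since each $\pball(W,s)$ factors as the product $B(\ef(W), s) \times (\tf(W) - s^2, \tf(W) + s^2)$ of a Euclidean spatial ball with a time interval, disjointness forces either $|\ef(Y) - \ef(Z)| \geq c_n^3(r_Y + r_Z)$ or $|\tf(Y) - \tf(Z)| \geq c_n^6(r_Y^2 + r_Z^2)$. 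The first inequality gives $\|Y - Z\| \geq c_n^3 r_Y$ via the spatial component of the parabolic distance in \eqref{parabolicdist}, while the second gives $\|Y - Z\| \geq c_n^3 (r_Y^2 + r_Z^2)^{1/2} \geq c_n^3 r_Y$ via the temporal component. Either way, $r_Y - r_Z \leq r_Y \leq c_n^{-3}\|Y - Z\|$, which is the desired bound.

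The only subtlety is the edge case $r_Z = 0$, i.e.\ $Z \in \ccc_0$, in which (n1) becomes vacuous at $Z$ since $\pball(Z, 0)$ is empty. I would handle this by a continuity argument: approximate $Z$ by a sequence $\{Z_k\} \subset \ccc_+$ with $Z_k \to Z$ and $r_{Z_k} \to 0$, and pass to the limit in the inequality $|r_Y - r_{Z_k}| \leq c_n^{-3}\|Y - Z_k\|$ established above; points of $\ccc_0$ that are isolated in $\ccc$ impose no further constraint. I do not expect any genuine difficulty here: the entire argument is a short computation that combines (n1) with the product description of $\pball$ and the max-formula \eqref{parabolicdist} for the parabolic distance, and the only thing to be careful about is correctly interpreting which of the spatial and temporal components of the parabolic ball enforces the separation.
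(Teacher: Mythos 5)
Your main computation (both radii positive) is correct and is exactly the unpacking of (n1) that the paper has in mind -- the paper itself offers nothing more than ``follows directly from (n1)'', and your case analysis via the product structure of $\pball$ and the max-formula \eqref{parabolicdist}, giving $\|Y-Z\|\geq c_n^3\max\{r_Y,r_Z\}$ and hence $|r_Y-r_Z|\leq c_n^{-3}\|Y-Z\|$, is the intended argument.

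The treatment of the degenerate case $Z\in\ccc_0$, however, does not work as written. Your continuity patch needs a sequence $Z_k\in\ccc_+$ with $Z_k\to Z$ \emph{and} $r_{Z_k}\to 0$, and neither is guaranteed: a point of $\ccc_0$ need not be a limit of points of $\ccc_+$ at all (in the paper's model example with $j=k$ one has $\ccc=\ccc_0$ and $\ccc_+=\emptyset$, and in general $\ccc_0$ arises as a Hausdorff limit of ball centers, not of $\ccc_+$), and even when such $Z_k$ exist, $r_{Z_k}\to 0$ is a continuity statement about $r$ at $Z$ -- precisely part of what is being proved -- so the limit only yields $r_Y-\limsup_k r_{Z_k}\leq c_n^{-3}\|Y-Z\|$, which is weaker than needed. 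The remark that ``points of $\ccc_0$ that are isolated in $\ccc$ impose no further constraint'' is also backwards: the constraint to verify pairs such a $Z$ with an arbitrary $Y\in\ccc_+$ and reads $r_Y=r_Y-r_Z\leq c_n^{-3}\|Y-Z\|$, i.e.\ $Z\notin\pball(Y,c_n^3 r_Y)$, and this must hold whether or not $Z$ is isolated. The correct resolution is not a limiting argument but the reading of (n1) under which the degenerate ball attached to $Z\in\ccc_0$ is the singleton $\{Z\}$ rather than the empty set (consistent with the paper's convention $\pball_{r_Y}(\ccc)=\ccc_0\bigcup\bigcup_{Y\in\ccc_+}\pball(Y,r_Y)$, which treats points of $\ccc_0$ as radius-zero balls): pairwise disjointness then says directly that no point of $\ccc$ lies in $\pball(Y,c_n^3 r_Y)$ for any other $Y\in\ccc_+$, which gives $\|Y-Z\|\geq c_n^3 r_Y$ and closes the case at once. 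With the strict open-ball reading you adopted, (n1)--(n4) genuinely do not constrain how close a point of $\ccc_0$ may sit to a point of $\ccc_+$, so the gap cannot be repaired by continuity.
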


Next, we show that all points in the center have nearly identical time components, and their corresponding affine planes are close.

\begin{lem} \label{lem:x01}
For any $\ep>0$, let $\nnn=\pball(X,2r)\setminus \pball_{r_Y}(\ccc)$ be a $(j,\delta,r)$-neck region with $\delta \le \delta(n,\ep)$. Then, for any $Y_1,Y_2 \in \ccc$ with $s=\|Y_1-Y_2\|$,
\begin{equation} \label{eq:times1}
|\tf(Y_1)-\tf(Y_2)|\le \ep^2 s^2
\end{equation}
and
\begin{equation} \label{eq:times2}
d_H \lc (V_{Y_1}+\ef(Y_1) )\bigcap B(\ef(Y_1),10 s),(V_{Y_2}+\ef(Y_2))\bigcap B(\ef(Y_1),10 s)\rc\leq \ep s,
\end{equation} 
where $d_H$ denotes the Hausdorff distance in $\R^{n+1}$. 
\end{lem}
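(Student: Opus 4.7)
The plan is to argue by contradiction via a rescaling and compactness argument, using (n2) together with (n3) to identify $Y_2$ on the singular spine of the cylindrical limit. Suppose the conclusion fails for some fixed $\ep>0$. Then there exist sequences $\delta_i\downarrow 0$, flows $\MM_i \in \EE(n,\alpha,\Lambda)$, $(j,\delta_i,r_i)$-neck regions $\nnn_i\subset \pball(X_i,2r_i)$, and centers $Y_1^i, Y_2^i\in \ccc_i$ with $s_i:=\|Y_1^i-Y_2^i\|$ violating at least one of the two estimates. Assume without loss of generality $r_{Y_2^i}\le r_{Y_1^i}$ and set $t_i:=\max(r_{Y_1^i},s_i)$. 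By disjointness in (n1), $s_i\ge c_n^3 r_{Y_1^i}$, so $t_i\le c_n^{-3}s_i$; by the neck region definition $r_{Y_1^i}\le c_n^{-3}r_i$, so also $t_i\le c_n^{-3}r_i$. Rescale at $Y_1^i$ by $t_i$ to obtain $\MM_i':=\MM_{Y_1^i,t_i}$, so $Y_1^i$ maps to the spacetime origin and $\tilde Y_2^i:=(Y_2^i-Y_1^i)/t_i$ has parabolic norm $s_i/t_i\in[c_n^3,1]$.

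Since $t_i\ge r_{Y_1^i}\ge r_{Y_2^i}$ and $t_i\le c_n^{-3}r_i$, condition (n3) applies at both $Y_1^i$ and $Y_2^i$ at scale $t_i$; equivalently, in $\MM_i'$ both $\vec 0$ and $\tilde Y_2^i$ are $(j,\delta_i,1)$-cylindrical with respect to the axes $V_{Y_1^i}$ and $V_{Y_2^i}$. Since $\delta_i\to 0$, after passing to a subsequence $\MM_i'$ converges in the Brakke sense to a shrinking generalized cylinder $\MM_\infty\in\CC^j$ whose $j$-plane axis $V_\infty=\lim V_{Y_1^i}\in\mathrm{Gr}(j,n+1)$ passes through the origin. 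Along a further subsequence, $\tilde Y_2^i\to Y_\infty=(x_\infty,t_\infty)$ with $\|Y_\infty\|\in[c_n^3,1]$, and $V_{Y_2^i}\to V_{\infty,2}$.

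The crux is to identify $Y_\infty$. Condition (n3) gives $\Theta(\MM_i,Y_2^i,\tau)=\Theta_j+\Psi(\delta_i)$ for $\tau\in[r_{Y_2^i}^2,(c_n^{-3}r_i)^2]$; combining with (n2), which pins the density within an interval of width $\delta_i^2$ over the wider range $[\delta_i r_{Y_2^i}^2,\delta_i^{-1}r_i^2]$, one has $\Theta(\MM_i,Y_2^i,\tau)=\Theta_j+\Psi(\delta_i)$ throughout that range. After rescaling by $t_i$, this range becomes $[\delta_i(r_{Y_2^i}/t_i)^2,\delta_i^{-1}(r_i/t_i)^2]$, which exhausts $(0,\infty)$ since $r_{Y_2^i}/t_i\le 1$ and $r_i/t_i\ge c_n^3 r_i/s_i\ge c_n^3/4$ (using $s_i\le 4r_i$). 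By continuity of the Gaussian density under Brakke convergence, $\Theta(\MM_\infty,Y_\infty,\tau)=\Theta_j$ for all $\tau>0$. A direct computation on the cylinder flow $\MM_\infty$ now forces $Y_\infty=(x_\infty,0)$ with $x_\infty\in V_\infty$: for $t_\infty<0$ the density tends to $1$ as $\tau\to 0^+$; for $t_\infty>0$ it vanishes for $\tau<t_\infty$; and at $(x,0)$ with $x\notin V_\infty$, the off-axis Gaussian factor $e^{-|x_2|^2/(4\tau)}$ drives the density to $0$ as $\tau\to 0^+$.

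It remains to conclude. Unrescaling $t_\infty=0$ yields $|\tf(Y_2^i)-\tf(Y_1^i)|/t_i^2\to 0$, and since $t_i\le c_n^{-3}s_i$ this is $o(s_i^2)$, contradicting the first alternative for $i$ large. For the plane alternative, the $(j,\delta_i,1)$-cylindrical structure at $\tilde Y_2^i$ passes to the limit: the affine axis $V_{\infty,2}+x_\infty$ describes the limit cylinder $\MM_\infty$ on a neighborhood of $Y_\infty$, which must coincide with the global axis $V_\infty$, so $V_{\infty,2}+x_\infty=V_\infty$; together with $x_\infty\in V_\infty$ this gives $V_{\infty,2}=V_\infty$. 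Thus both $V_{Y_1^i}$ and $V_{Y_2^i}$ converge to $V_\infty$ and $\ef(\tilde Y_2^i)\to x_\infty\in V_\infty$, so the rescaled Hausdorff distance of the two affine planes intersected with $B(0,10s_i/t_i)$ tends to $0$; unrescaling, the distance is $o(s_i)$, contradicting the second alternative. The main obstacle is the third paragraph: carefully tracking the scales on which (n2) together with (n3) pin the density to $\Theta_j$, and justifying the Gaussian density convergence under Brakke convergence — the latter follows from smooth convergence on the regular part of $\MM_\infty$ together with the uniform entropy bound $\lambda(\MM_i)\le \Lambda$.
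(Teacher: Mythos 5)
Your proposal is essentially correct, but it takes a genuinely different route from the paper. The paper's proof is direct and compactness-free: by (n1) one has $s \ge c_n^3\max\{r_{Y_1},r_{Y_2}\}$, so by (n3) both $Y_1$ and $Y_2$ are $(j,\delta,c_n^{-3}s)$-cylindrical; thus the same flow carries two graphical cylinder descriptions at the common scale $c_n^{-3}s\gg s$ on overlapping regions, and two model cylinders that are both $\delta$-close to the flow there must have spines and singular times within $\Psi(\delta)\cdot c_n^{-3}s$ of each other, which gives \eqref{eq:times1} and \eqref{eq:times2} once $\delta\le\delta(n,\ep)$ — no use of (n2), no limit flows, and a constant depending only on $(n,\ep)$. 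You instead argue by contradiction and rescaling: you combine (n2), (n3) and Huisken monotonicity to pin $\Theta(\MM_i,Y_2^i,\cdot)$ at $\Theta_j+\Psi(\delta_i)$ on a range of scales that exhausts $(0,\infty)$ after rescaling by $t_i$, pass to a cylindrical limit, identify $Y_\infty$ as a spine point at the singular time via the constancy of the Gaussian ratio, and then match the two cylinder descriptions of the limit to align the axes. This is sound, but it buys less: the compactness step and the tail control of the Gaussian ratios use the entropy bound (and Brakke compactness for nonnegative times), so your $\delta$ depends on $\Lambda$ (and on membership in $\mathcal E(n,\alpha,\Lambda)$) rather than only on $(n,\ep)$ as the lemma states — harmless for the paper's applications, since downstream constants carry $\alpha,\Lambda$ anyway, but strictly weaker and non-effective, whereas the paper's argument is quantitative. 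Two points you should tighten: (i) the case $t_\infty>0$ is cleaner if you avoid describing the limit flow at nonnegative times and instead let $\tau\downarrow t_\infty^+$, where the limiting ratio is computed on the exact cylinder at negative times and tends to $0<\Theta_j$; (ii) the convergence $\Theta(\MM_i',\tilde Y_2^i,\tau)\to\Theta(\MM_\infty,Y_\infty,\tau)$ involves moving base points and times, so it should be justified by monotonicity together with locally smooth convergence at negative times and the entropy bound, as you indicate, and you should also pass to a subsequence along which the same one of the two asserted estimates fails for every $i$.
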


\begin{proof}
Since $\pball(Y_1,c_n^3 r_{Y_1})$ and $\pball(Y_2,c_n^3 r_{Y_2})$ are disjoint, we have $s \ge c_n^3 \max\{r_{Y_1}, r_{Y_2}\}$. By the definition of the neck region, both $Y_1$ and $Y_2$ are $(j,\delta, c_n^{-3} s)$-cylindrical with respect to $L_{Y_1}$ and $L_{Y_2}$, respectively. Consequently, it follows from Definition \ref{deficylindrical} that if $\delta$ is sufficiently small, the parabolic Hausdorff distance between $L_{Y_1}$ and $L_{Y_2}$ in $\pball(Y_1, 10\ep)$ is smaller than $\ep s$. In particular, $|\tf(Y_1)-\tf(Y_2)| \le\ep^2 s^2$ and that the Hausdorff distance between $(V_{Y_1}+\ef(Y_1) )$ and $(V_{Y_2}+\ef(Y_2) )$ in the ball $B(\ef(Y_1),10 s)$ is less than $\ep s$.
\end{proof}

Using Lemma \ref{lem:x01}, we derive the following structural description of a neck region.
\begin{thm}[Neck structure]\label{neckstructurethm}
For any $\ep>0$, let $\nnn=\pball(X,2r)\setminus \pball_{r_Y}(\ccc)$ be a $(j,\delta,r)$-neck region with $\delta \le \delta(n,\ep)$. Then, the following conclusion holds.

For a fixed $Y_0 \in \ccc$, let $V=V_{Y_0}$ and let $\pi: \ccc \to V$ be the projection onto $V$. Then, for any $Y_1,Y_2 \in \ccc$,
\begin{equation}\label{bilipschitz}
(1-\ep) \rVert Y_1-Y_2\rVert \leq |\pi(Y_1)-\pi(Y_2)|\leq \rVert Y_1-Y_2\rVert.
\end{equation}
In particular, $\ccc$ is a Lipschitz graph over $V$.
\end{thm}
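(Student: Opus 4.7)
The upper bound $|\pi(Y_1)-\pi(Y_2)|\le \|Y_1-Y_2\|$ is immediate: orthogonal projection onto $V$ is $1$-Lipschitz in the Euclidean metric, and $|\ef(Y_1)-\ef(Y_2)|\le \|Y_1-Y_2\|$ by the definition of the parabolic distance. The real content of \eqref{bilipschitz} is the lower bound, which forces $\pi$ restricted to $\ccc$ to be injective and thus realizes $\ccc$ as a Lipschitz graph over $V$ with Lipschitz constant at most $(1-\ep)^{-1}$. The plan is, for any $Y_1,Y_2\in\ccc$ with $s:=\|Y_1-Y_2\|$, to bound the component of $\ef(Y_2)-\ef(Y_1)$ orthogonal to $V$ by $\ep s$.

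First I would apply Lemma \ref{lem:x01} to $(Y_1,Y_2)$ with an auxiliary parameter $\ep'=\ep'(\ep,n)$ chosen small later (requiring $\delta$ correspondingly smaller): this yields $|\tf(Y_1)-\tf(Y_2)|\le (\ep')^2 s^2$, which forces $s=|\ef(Y_1)-\ef(Y_2)|$. Next, condition (n4) at $Y_1$ at scale comparable to $s$ (noting from (n1) that $s\ge c_n^3 r_{Y_1}$) places $Y_2\in\pball_{2c_n s}(L_{Y_1})$; combined with the time estimate, this says $\ef(Y_2)$ lies within Euclidean distance $2c_n s$ of the affine $j$-plane $V_{Y_1}+\ef(Y_1)$. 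Orthogonally decomposing $\ef(Y_2)-\ef(Y_1)=v+w$ with $v\in V_{Y_1}$ and $w\perp V_{Y_1}$ then gives $|w|\le 2c_n s$ and $|v|\ge (1-c_n)s$.

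The final ingredient is an angle bound between $V_{Y_1}$ and $V=V_{Y_0}$ as linear subspaces. Applying Lemma \ref{lem:x01} to the pair $(Y_0,Y_1)$ gives Hausdorff closeness of the affine planes $V_{Y_0}+\ef(Y_0)$ and $V_{Y_1}+\ef(Y_1)$ by at most $\ep'\|Y_0-Y_1\|$ on a ball of radius $10\|Y_0-Y_1\|$. An elementary linear-algebra argument (pick $j$ unit directions spanning $V_{Y_1}$, rescale them by a factor comparable to $\|Y_0-Y_1\|$, approximate them by points of $V_{Y_0}+\ef(Y_0)$, and subtract) converts this into a direction estimate $\angle(V_{Y_1},V)\le C_n\ep'$. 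Decomposing $v=v_\parallel+v_\perp$ with $v_\parallel\in V$ and $v_\perp\perp V$ yields $|v_\perp|\le C_n\ep'|v|$, and assembling:
\begin{equation*}
|\pi(Y_1)-\pi(Y_2)|\ge |v_\parallel|-|\pi_V(w)|\ge (1-C_n\ep')|v|-|w|\ge (1-C_n\ep')(1-c_n)s-2c_n s\ge (1-\ep)s,
\end{equation*}
provided $\ep'$ is sufficiently small in terms of $\ep$ and $n$. Injectivity of $\pi$ on $\ccc$ is then automatic, and the Lipschitz graph conclusion follows.

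The step I expect to be most delicate is the angle estimate for $V_{Y_1}$ versus $V$ in the Grassmannian: Lemma \ref{lem:x01} provides only affine Hausdorff closeness on a local ball of radius proportional to $\|Y_0-Y_1\|$, and one must verify cleanly that closeness by $\eta$ on a ball of radius $R$ produces a direction bound of order $\eta/R$. One also needs to confirm that the scales invoked remain within the range allowed by (n3) when $\|Y_0-Y_1\|$ is comparable to the ambient scale $r$. Everything else is routine bookkeeping in the parabolic distance.
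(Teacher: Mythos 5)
Your overall architecture is the same as the paper's (time estimate from Lemma \ref{lem:x01}, a bound on the component of $\ef(Y_2)-\ef(Y_1)$ orthogonal to $V_{Y_1}$, an angle bound between $V_{Y_1}$ and $V=V_{Y_0}$ from Lemma \ref{lem:x01} applied to $(Y_0,Y_1)$, then assembly), but the middle step as you wrote it has a genuine gap. You invoke (n4) at $Y_1$ ``at scale comparable to $s$'', justified by the observation from (n1) that $s\ge c_n^3 r_{Y_1}$. However, (n4) is only available at scales $s'\ge r_{Y_1}$, and (n1) gives merely $r_{Y_1}\le c_n^{-3}s$, not $r_{Y_1}\le s$: two center points may well satisfy $c_n^3 r_{Y_1}\le \|Y_1-Y_2\| \ll r_{Y_1}$ (nothing in Definition \ref{defiofneckregion} forbids this, since only the tiny balls $\pball(Y,c_n^3 r_Y)$ are required to be disjoint). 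In that regime the smallest admissible scale in (n4) is $r_{Y_1}$, and the resulting error $c_n r_{Y_1}$ can be as large as $c_n^{-2}s\gg s$, which destroys the bound $|w|\le 2c_n s$ and hence the lower bound in \eqref{bilipschitz}. A secondary problem with the same step: (n4) also requires $\pball(Y_1,2s')\subset\pball(X,2r)$, which fails when $Y_1,Y_2$ sit near the boundary of $\pball(X,2r)$ with $s$ comparable to $r$.

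The repair is exactly what the paper does, and it uses the lemma you already cite: take the second conclusion \eqref{eq:times2} of Lemma \ref{lem:x01} for the pair $(Y_1,Y_2)$ rather than (n4). That lemma rests on (n1) and (n3): both $Y_1$ and $Y_2$ are $(j,\delta,c_n^{-3}s)$-cylindrical with respect to $L_{Y_1}$ and $L_{Y_2}$, and since both cylindrical models describe the same flow $\MM$ near these points, the two affine planes are $\ep' s$-close in Hausdorff distance on $B(\ef(Y_1),10s)$; as $\ef(Y_2)$ lies on its own plane, this gives precisely the perpendicular bound $|w|\le \ep' s$ you need, with no restriction tied to $r_{Y_1}$ or to the boundary of $\pball(X,2r)$. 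With that substitution, your remaining steps (the angle estimate $\angle(V_{Y_1},V)\le C_n\ep'$ from applying the lemma to $(Y_0,Y_1)$, and the final decomposition $v=v_\parallel+v_\perp$) coincide with the paper's argument and close the proof.
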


\begin{proof}
The second inequality in \eqref{bilipschitz} is straightforward, so we only need to prove the first. Without loss of generality, we assume $r=1$.

For any $Y_1=(y_1,t_1), Y_2=(y_2,t_2) \in \ccc$, set $s:=\|Y_1-Y_2\|$, $V_1:=V_{Y_1}$ and $V_2:=V_{Y_2}$. First, by \eqref{eq:times1}, we have $|t_1-t_2| \le \ep^2 s^2$, which implies $|y_1-y_2|=s$, if $\delta$ is sufficiently small. Additionally, from \eqref{eq:times2}, we obtain
\begin{equation*}
d_H \lc (y_1+V_1)\bigcap B(y_1,10 s),(y_2+V_2))\bigcap B(y_1,10 s)\rc\leq \ep s,
\end{equation*} 
and consequently,
\begin{align} \label{eq:times3}
|\pi_1(Y_1)-\pi_1(Y_2)| \ge \sqrt{1-\ep^2} s,
\end{align}
where $\pi_1$ is the projection onto $V_1$. Moreover, by \eqref{eq:times2} again, we have
\begin{align}\label{eq:times4}
d_H \lc V_1 \bigcap B(\vec{0}^{n+1},1), V \bigcap B(\vec{0}^{n+1},1)\rc\leq \ep.
\end{align}
Combining \eqref{eq:times3} with \eqref{eq:times4}, we immediately conclude that
\begin{align*}
|\pi(Y_1)-\pi(Y_2)| \ge (1-2\ep) s.
\end{align*}
\end{proof}

For later applications, we define the \textbf{packing measure} associated with the neck region $\nnn=\pball(X,2 r)\setminus \pball_{r_Y}(\ccc)$ by
\begin{equation} \label{eq:pack}
\mu :=\sum_{Y\in\ccc_+}r_Y^j\delta_{Y}+\mathscr{H}_P^j \llcorner \ccc_0.
\end{equation}

Next, we establish the following Ahlfors regularity property for the packing measure.

\begin{prop} \label{prop:ahlfors}
For any $Y\in \ccc,s\geq r_Y$ with $\pball(Y,2s)\subset \pball(X,2r)$, we have
\begin{equation}\label{Ahlforsregularity}
C(n)^{-1}s^j\leq\mu(\pball(Y,s))\leq C(n) s^j.
\end{equation}
\end{prop}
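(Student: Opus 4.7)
The plan is to prove the upper and lower bounds separately. Both rely on the orthogonal projection $\pi$ onto $V:=V_Y+\ef(Y)$, which by Theorem \ref{neckstructurethm} is $(1-\Psi(\delta))$-bi-Lipschitz on $\ccc$, together with the disjointness (n1) and covering (n4) axioms of Definition \ref{defiofneckregion}.

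For the upper bound, fix $Y$ and $s$ with $s\ge r_Y$ and $\pball(Y,2s)\subset \pball(X,2r)$. Axiom (n1) gives pairwise disjoint parabolic balls $\pball(Y',c_n^3 r_{Y'})$ for $Y'\in \ccc_+\cap \pball(Y,s)$, and the bi-Lipschitz projection transports them to pairwise disjoint Euclidean balls $B(\pi(Y'), c_n^3 r_{Y'}/4)\cap V$ inside $V \cap B(\ef(Y),2s)$. Volume comparison in $V$ yields $\sum_{Y'\in \ccc_+\cap \pball(Y,s)} r_{Y'}^j \le C(n) s^j$. Applying the same near-isometric projection to $\ccc_0$ gives $\mathscr{H}_P^j(\ccc_0\cap \pball(Y,s)) \le C(n) s^j$, and summing proves the upper bound.

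For the lower bound the key reduction is the covering claim
\begin{equation*}
V\cap B(\ef(Y),s/2) \subset \bigcup_{Y'\in \ccc_+\cap \pball(Y,2s)} \lc B(\pi(Y'),C r_{Y'})\cap V \rc \cup \pi\lc \ccc_0\cap \pball(Y,2s) \rc,
\end{equation*}
for a dimensional constant $C$. Granting the claim, taking $\mathscr H^j$ on both sides and using that $\pi$ is $1$-Lipschitz on $\ccc_0$ gives $\omega_j (s/2)^j \le C(n)\mu(\pball(Y,2s))$. To establish the covering, given $p \in V\cap B(\ef(Y),s/2)$ I set $q:=(p,\tf(Y)) \in L_Y$ and construct $Y_k\in \ccc$ recursively by applying axiom (n4) at $Y_k$ on scale $\sim\sigma_k:=\|Y_k-q\|$: after orthogonally projecting $q$ onto $L_{Y_k}$ to produce $\tilde q_k$, axiom (n4) furnishes $Y_{k+1}$ with $\|Y_{k+1}-\tilde q_k\|\le c_n\sigma_k$, while Lemma \ref{lem:x01} bounds the reprojection error $\|\tilde q_k-q\|$ via the tilt estimates \eqref{eq:times1} and \eqref{eq:times2}. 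The iteration terminates in one of two ways: either $r_{Y_k}\gtrsim \sigma_k$, in which case $p\in B(\pi(Y_k),Cr_{Y_k})\cap V$ with $Y_k\in \ccc_+$; or $\sigma_k\to 0$ and, by closedness of $\ccc$, the limit of $\{Y_k\}$ lies in $\ccc_0$ with projection $p$.

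The main obstacle is ensuring the reprojection error is dominated by the geometric decay at each step, since \emph{a priori} the tilt between $L_Y$ and $L_{Y_k}$ produced by Lemma \ref{lem:x01} scales with $\|Y-Y_k\|\lesssim\sigma_0$ rather than with $\sigma_k$, so a naive iteration would not contract once $\sigma_k\ll\sigma_0$. To overcome this I anchor each successive application of (n4) at the most recent center $Y_k$, so that the only tilt that enters is between $L_{Y_{k-1}}$ and $L_{Y_k}$, controlled by $\|Y_{k-1}-Y_k\|\sim\sigma_{k-1}$. Choosing $\delta$ small enough to force $\Psi(\delta)\ll c_n$, the compounded reprojection errors form a geometric series dominated by the contraction factor $c_n$ from (n4), allowing $\sigma_k\to 0$ and the covering claim to close.
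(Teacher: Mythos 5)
Your upper bound follows the paper's argument essentially verbatim (bi-Lipschitz projection from Theorem \ref{neckstructurethm}, disjointness from (n1), volume comparison in $V$), and that part is fine. The gap is in the lower bound, at exactly the step you flag as the main obstacle: your iteration tracks the \emph{parabolic} distance $\sigma_k=\|Y_k-q\|$ to the fixed spacetime point $q=(p,\tf(Y))$, and your dichotomy is ``either $r_{Y_k}\gtrsim\sigma_k$, or $\sigma_k\to 0$ with limit in $\ccc_0$ projecting to $p$.'' But $L_{Y_k}=(\ef(Y_k)+V_{Y_k})\times\{\tf(Y_k)\}$ lies in its own time slice, so the reprojection error obeys $\|\tilde q_k-q\|\ge |\tf(Y_k)-\tf(Y)|^{1/2}$, and Lemma \ref{lem:x01} only bounds this time offset by $\Psi(\delta)\,\|Y_k-Y\|\sim\Psi(\delta)\,s$ (the offsets generated by the steps themselves already accumulate to order $c_n\sigma_0$). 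Anchoring each application of (n4) at the most recent center kills the \emph{spatial} tilt contributions, but these time offsets are absolute and do not form a geometric series relative to $\sigma_k$: once $\sigma_k$ has decayed to the scale of the accumulated time offset, the recursion stops contracting while $r_{Y_k}$ may still be $0$. In that regime neither branch of your dichotomy applies, and in particular you cannot conclude $p\in\pi(\ccc_0)$ --- and your covering claim uses the exact image $\pi\lc\ccc_0\cap\pball(Y,2s)\rc$ with no enlargement, so the claim is not established. A concrete obstruction: \eqref{eq:times1} allows the center to be a graph whose time coordinate drifts by $\Psi(\delta)^2d^2$ over spatial displacement $d$; then the center point lying over $p$ sits at parabolic distance about $\Psi(\delta)\,s$ from $q$, so $\sigma_k$ can never tend to $0$ even though the covering is morally true.

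The paper avoids this entirely by never measuring spacetime distance to $q$: it works with $s_Z=|w-\pi(Z)|$, the distance measured \emph{after} projecting to the fixed plane $V$, so both time offsets and displacements transverse to $V$ are invisible, and instead of an iteration it uses a single infimum-and-contradiction step: if $\bar s=\inf s_Z$ is attained at $W$ with $\bar s>r_W$, Lemma \ref{lem:x01} gives $W'\in L_W$ with $|\pi(W')-w|\le c_n\bar s$, then (n4) at scale $2\bar s$ gives $W''\in\ccc$ with $\|W''-W'\|\le 2c_n\bar s$, whence $|w-\pi(W'')|\le 3c_n\bar s<\bar s$, contradicting minimality. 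If you wish to keep an explicit iteration, you must track the projected quantity $|\pi(Y_k)-p|$ rather than $\|Y_k-q\|$ and allow the limit point to live at a different time. A minor further point: your covering draws centers from $\pball(Y,2s)$ and hence bounds $\mu(\pball(Y,2s))$ rather than $\mu(\pball(Y,s))$ as stated (the paper keeps the covering centers inside $\pball(Y,s)$); this is fixable (e.g.\ run the argument at radius $s/2$ and treat the case $r_Y>s/2$ trivially via $\mu(\pball(Y,s))\ge r_Y^j$), but it should be addressed.
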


\begin{proof}
Let $V=V_Y$ and denote the projection onto $V$ by $\pi$.

We first prove
\begin{equation}\label{ahlforsregularity}
\sum_{Y\in\ccc_{+}}r_Y^j+\mathscr{H}_P^j(\ccc_0)\leq C(n) r^j.
\end{equation}

Since, by our definition, the sets $\{\pball (Y, c_n^3r_Y)\}$ are disjoint, it follows from \eqref{bilipschitz} that $\{B(\pi(Y),c_n^3r_Y/2)\}$ are disjoint in $V$. Consequently, we have
\begin{equation}\label{eq:times5}
\sum_{Y\in\ccc_+}r_Y^j\leq C(n)\sum_{Y\in\ccc_+}|B(\pi(Y),c_n^3r_Y/2)|_V\leq C(n)r^j,
\end{equation} 
where $|\cdot|_V$ denotes the volume on the plane $V$.

Additionally, it follows from \eqref{bilipschitz} that
\begin{equation}\label{eq:times6}
\mathscr H_P^j(\ccc_0)\leq C(n)\mathscr H^j(\pi(\ccc_0))\leq C(n) r^j.
\end{equation} 

Combining \eqref{eq:times5} and \eqref{eq:times6}, we immediately conclude that \eqref{ahlforsregularity} holds.

An important observation here is that $\nnn'=\nnn \cap \pball(Y, 2s)$ is also a $(j,\delta, s)$-neck region with center $\ccc'=\ccc \cap \pball(Y, 2s)$ and the same radius function. Therefore, by \eqref{ahlforsregularity}, the upper bound in \eqref{Ahlforsregularity} holds. 

To establish the lower bound of \eqref{ahlforsregularity}, we consider the containment:
\begin{equation}\label{eq:times7}
V\bigcap B(\pi(Y),s/2) \subset \bigcup_{Z\in \ccc \bigcap \pball(Y,s)} V\bigcap \bar{B}(\pi(Z), r_Z),
\end{equation} 
where $\bar{B}(\pi(Z), r_Z)=\pi(Z)$ if $Z \in \ccc_0$. By \eqref{bilipschitz} and \eqref{eq:times7}, we have
\begin{align*}
C(n)^{-1}s^j\leq |V\bigcap \bar{B}(\pi(Y),s/2)| \leq \sum_{Z\in \ccc_+ \bigcap \pball(Y,s)}C(n)r_Z^j+C(n) \mathscr{H}_P^j \lc\ccc_0 \bigcap \pball(Y,s) \rc \le C(n) \mu(\pball(Y,s)).
\end{align*}
If \eqref{eq:times7} were to fail, there would exist $w \in V\bigcap B(\pi(Y),s/2)$ not covered by the union. Define $s_Z=|w-\pi(Z)|$ for any $Z\in \ccc \bigcap \pball(Y,s)$ and set $\bar s=\inf_{Z\in \ccc \bigcap \pball(Y,s)} s_Z>0$. Assume $\bar s=s_{W}>r_{W}$ for some $W\in \ccc \bigcap \pball(Y,s)$. Then, by Lemma \ref{lem:x01}, there exists $W' \in L_{W}$ such that $|\pi(W')-w| \le c_n \bar s$. Further, by Definition \ref{defiofneckregion} (n4),
\begin{align*}
L_{W}\bigcap \pball(W, 2\bar s)\subset \bigcup_{Z\in \ccc\bigcap \pball(W, 2\bar s)} \pball(Z, 2c_n \bar s).
\end{align*}
Thus, there exists $W'' \in \ccc\bigcap \pball(W, 2\bar s)$ such that $\|W''-W'\| \le 2c_n \bar s$. This implies
\begin{align*}
|w-\pi(W'')| \le |w-\pi(W')|+\|W'-W''\| \le 3c_n \bar s<\bar s,
\end{align*}
which contradicts the definition of $W$, thereby confirming that \eqref{eq:times7} holds.

In conclusion, the proof is complete.
\end{proof}

For later applications, we show that any point in the neck region is sufficiently regular.

\begin{lem} \label{lem:regu}
For any $\ep>0$, there exists $\chi=\chi(n,\ep)>0$ such that if $\nnn=\pball(X,2 r)\setminus \pball_{r_Y}(\ccc)$ is a $(j,\delta,r)$-neck region with $\delta \le \delta(n,\ep)$, then any $Y \in \nnn \bigcap \MM \bigcap \pball(X,r)$ is $(n, \ep, s)$-cylindrical, provided that $s \le \chi \|Y - \ccc\|$.
\end{lem}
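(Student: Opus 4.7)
The plan is to exploit the cylindricality at a nearby center point, derive a Gaussian density bound at $Y$ close to $1$, and apply White's local regularity to conclude smoothness and the desired $(n,\ep,s)$-cylindricality.

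Set $\rho := \|Y-\ccc\|$ and pick $Z\in\ccc$ with $\|Y-Z\|\le 2\rho$. Since $Y\notin\pball_{r_Y}(\ccc)$, we have $r_Z\le 2\rho$, so property (n3) applies at $Z$ at a scale $s_0$ comparable to $\rho$ (valid as $\rho\lesssim r$ and $c_n^{-3}$ is enormous). Thus in the rescaled flow $\MM_{Z,s_0}$, the flow is $\Psi(\delta)$-close on a large spacetime region to the shrinking cylinder $\R^j\times S^{n-j}(\sqrt{2(n-j)|t|})$ with axis $V_Z$. Writing the rescaled position of $Y$ as $\tilde Y=(a,b)\in V_Z\oplus V_Z^\perp$ with rescaled time $\tilde t$, combining $\|\tilde Y-\tilde\ccc\|\ge \rho/s_0$ with property (n4) gives that the parabolic distance from $\tilde Y$ to $L_Z$, which equals $\max(|b|,\sqrt{|\tilde t|})$, is bounded below; together with the cylinder relation $|b|^2\approx 2(n-j)|\tilde t|$ this yields $|b|\ge c(n)>0$.

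Next I would estimate the Gaussian density $\Theta(Y,\tau)$ for a suitable scale $\tau$. Choosing $\tilde\tau:=\tau/s_0^2$ so that $\tilde t-\tilde\tau$ lies in the cylinder-approximation regime $[-\delta^{-2},-\delta^2]$, the density $\Theta(Y,\tau)$ is $\Psi(\delta)$-close to the Gaussian density of the cylinder time slice at $(a,b)$ at scale $\sqrt{\tilde\tau}$. Since $(a,b)$ lies at definite distance from the cylinder axis, the cylinder is locally a smooth hypersurface near it, so its Gaussian density at $(a,b)$ tends to $1$ as $\tilde\tau\to 0$; hence one can arrange $\Theta(Y,\tau)\le 1+\ep_0(n)/2$ for $\delta$ small, where $\ep_0(n)$ is the threshold in White's local regularity. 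By \cite{white2005local}, $Y$ is a regular point of $\MM$ and the flow is smooth in a parabolic ball of radius $\eta\sqrt\tau\sim\rho$; standard smoothness-at-smaller-scales estimates for mean curvature flow then deliver $(n,\ep,s)$-cylindricality for every $s\le\chi\rho$, where $\chi=\chi(n,\ep)$ is a small constant.

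The main obstacle is the regime $\tf(Y)>\tf(Z)$, in which $\tilde t>0$ and the cylinder approximation does not directly control $\MM_{Z,s_0}$ at $\tilde Y$'s own time; the density estimate must reach back into the past of $\tf(Z)$, forcing $\tilde\tau\ge\tilde t+\delta^2$. To keep $\tilde\tau$ small enough that the cylinder density at $(a,b)$ remains near $1$, I would choose $s_0$ significantly larger than $\rho$ (by a factor $K=K(n,\ep)$, permitted since $c_n^{-3}$ is enormous) so that $|\tilde t|\lesssim 1/K^2$ is as small as needed. The delicate balance between staying inside the approximation range $[-\delta^{-2},-\delta^2]$ and inside the density-near-$1$ regime is the most subtle point; the remainder of the argument is routine.
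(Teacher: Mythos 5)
Your argument is fine (and essentially parallel to the paper's last step, with White's local regularity substituting for smooth convergence) only in the regime where $\tf(Y)$ lies a definite amount before $\tf(Z)$, inside the window controlled by (n3). The genuine gap is the case you yourself flag as ``the main obstacle,'' namely $\tf(Y)\ge \tf(Z)$ (and likewise the thin uncontrolled slab $|\tf(Y)-\tf(Z)|\lesssim \delta^2 r_Z^2$): property (n4) only gives a lower bound on the \emph{parabolic} distance from $Y$ to $L_Z$, i.e.\ on $\max\{|b|,|\tilde t|^{1/2}\}$, and when $\tilde t>0$ this bound can be carried entirely by the time component, so it does not force $|b|$ to be bounded below; the cylinder relation $|b|^2\approx 2(n-j)|\tilde t|$ that you use to upgrade it is available only for negative times inside the approximation window. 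Your proposed fix of rescaling at $s_0=K\rho$ does not help, because all the relevant quantities ($|b|$, $|\tilde t|^{1/2}$, and the lower bound coming from (n4)) scale down together to size $\sim 1/K$; after rescaling you still cannot exclude a flow point with $|b|\approx 0$ and $\tf(Y)-\tf(Z)\gtrsim \rho^2$, i.e.\ a point essentially on the axis shortly after $\tf(Z)$. For such a point the key assertion ``$(a,b)$ lies at definite distance from the cylinder axis, so the cylinder density there tends to $1$'' fails: looking back past $\tf(Z)$ at a scale comparable to $\tf(Y)-\tf(Z)$ the comparison configuration is an axis point of the shrinking cylinder, whose density is of order $\Theta_j>\sqrt2$ (or, if the backward reach is taken barely beyond $\tf(Z)$, much smaller than $1$), and in neither case is it pinned below White's threshold $1+\ep_0$, so the regularity theorem cannot be invoked as written.

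This is exactly the configuration the paper kills before doing any regularity: it first shows $|\tf(Y)-\tf(Y_1)|\ge \ep_n^2 d^2$ (if not, $Y$ would be parabolically close to $L_{Y_1}$ and then, via (n4), close to $\ccc$, contradicting $d=\|Y-\ccc\|$), and then uses the clearing-out lemma (together with the inward motion of the mean-convex flow) to rule out $\tf(Y)\ge \tf(Y_1)+\ep_n^2d^2$, so that $Y$ necessarily lies a definite time \emph{before} $\tf(Y_1)$, where the nearly cylindrical, smooth part of the flow gives $(n,\ep,s)$-cylindricality for all $s\le\chi d$ directly. To repair your proof you need some such ingredient: either quote the clearing-out lemma, or turn your density computation around — choose the backward reach just beyond $\tf(Z)$ so that the local cylinder contribution plus the entropy-controlled Gaussian tail is strictly less than $1$, contradicting $\Theta(\MM,Y)\ge 1$ for $Y\in\MM$ — which is in effect a monotonicity-formula proof of clearing out, but is not what your write-up currently does, and it is precisely the scale choice you left unresolved.
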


\begin{proof}
Set $d:=\|Y - \ccc\|=\rVert Y-Y_1\rVert$, where $Y_1\in \ccc$. Then $d \geq r_{Y_1}$, and $Y_1$ is $(j,\delta,d)$-cylindrical. If $\delta$ is suffciently small, there exists $\ep_n>0$ depending only on $n$ such that $|\tf(Y)-\tf(Y_1)| \ge \ep^2_n d^2$. Indeed, if $|\tf(Y)-\tf(Y_1)| \le \ep^2_n d^2$, then $\|Y- L_{Y_1}\|\le d/100$. However, by Definition \ref{defiofneckregion} (n4), there exists $Y_1'\in\ccc$ such that $\rVert Y-Y_1'\rVert\leq d/100$, contradicting the assumption that $d=\|Y - \ccc\|$.

Furthermore, by the clearing-out lemma (see \cite[Lemma 6.3]{brakke2015motion} \cite[Proposition E.5]{ecker2012regularity} or \cite[Theorem 6.1]{colding2016singular}), we have $\tf(Y) \le \tf(Y_1)-\ep^2_n d^2$, provided $\delta$ is small. This implies that $Y$ is $(n,\ep, \chi d)$-cylindrical, if $\delta$ and $\chi$ are sufficiently small.
\end{proof}

Next, we state the main result of this section.

\begin{thm}[Neck decomposition theorem]\label{neckdecomposition}
For any $\eta>0$, $r>0$ and $j\in\{0,1,\ldots,n-1\}$, suppose that $\delta\leq\delta(n,\alpha, \Lambda,\eta)$ and $X \in \MM \in \mathcal E(n,\alpha, \Lambda)$ with $\tf(X)>4\zeta^{-2}r^2$, for some $\zeta=\zeta(n,\alpha, \Lambda, \delta, \eta)$. Then, we have the following decomposition
\begin{equation*}
\pball(X, r) \bigcap \MM \subset \bigcup_a \lc (\nnn_a\bigcup \ccc_{0,a} )\bigcap \pball(Y_a,r_a) \rc \bigcup_b \pball(Y_b,r_b)\bigcup S_0
\end{equation*}
with the following properties:
\begin{enumerate}[label=(\alph*)]
\item For all $a$, $\nnn_a=\pball(Y_a, 2r_a)\setminus \pball_{r_{a,Y}}(\ccc_{a})$ is a $(j,\delta, r_a)$-neck region.
\item For all $b$, \bballY.
\item The following content estimate holds:
\begin{equation*}
\sum_a r_a^j+\sum_b r_b^j+\mathscr{H}_P^j(\bigcup_a\ccc_{0,a})\leq C(n,\alpha,\Lambda,\delta,\eta) r^j.
\end{equation*}
\item $S_0 \subset \MS^{j-1}$. In particular, $\mathscr{H}_P^j(S_0)=0$.
\item If $\eta\leq\eta(n,\epsilon)$ and $\delta\leq\delta(n,\alpha,\Lambda,\ep)$, then
$$\pball(X, r) \bigcap\mathcal{S}^j_{\epsilon}\subset S_0\bigcup\bigcup_a \ccc_{0,a}.$$
\end{enumerate}
\end{thm}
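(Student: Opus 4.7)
The plan is an iterative covering argument modeled on \cite{Cheeger2018RectifiabilityOS}. At each scale I would classify parabolic balls $\pball(Y,s)$ with $Y \in \MM$ into a short list of types --- the ``$c$-balls'' and ``$d$-balls'' referenced in Definition \ref{allkindsofballs} --- according to (i) whether $\pball(Y,4s)\cap\MM$ contains a point that is $(j',\eta,s)$-cylindrical for some $j'>j$, (ii) whether the set of $(j,\delta,s)$-cylindrical points in $\pball(Y,s)$ is $(j,\beta,s)$-independent, or (iii) whether the nearly cylindrical points are instead trapped in a thin tubular neighborhood of some affine $(j-1)$-plane. Case (i) produces a type-(b) output ball; case (ii) seeds or continues a neck region whose center is the chosen independent set; case (iii) triggers a Vitali refinement at scale $\theta s$ and recurses.

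First I would prove the two one-step decomposition lemmas (Propositions \ref{decompositioncball}, \ref{decompositiondball}) that replace a ball of a given type at scale $s$ by children at scale $\theta s$ of the allowed types, together with a controlled collection of output neck regions and output $(j' > j)$-balls. The content at each step is governed by Proposition \ref{prop:ahlfors}: a partially assembled neck projects bi-Lipschitz onto a $j$-plane by Theorem \ref{neckstructurethm}, so the sum of the $j$-th powers of the center radii inside a subball of radius $s'$ is bounded by $C(n) (s')^j$. The Vitali refinement at the type-(iii) stage contributes a factor of order $\theta^{-(j-1)}\cdot\theta^{j}=\theta$ per level and so multiplies across levels without blowing up.

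Iterating these one-step lemmas (Proposition \ref{inductivedecomposition}) then produces the global decomposition. The crucial finiteness input is that only boundedly many ``density-drop'' layers of type-(iii) subdivision can occur: the monotone Gaussian density $\Theta(Y,\cdot)$ is pinched between $\Theta_j$ and $\Lambda$, and by Theorem \ref{quantitativeuniq} any ball on which $\Theta$ fails to drop by at least $\delta^2$ over a wide enough range of scales must already be of type (i) or (ii). Summing the Ahlfors bound from Proposition \ref{prop:ahlfors} over each neck and the Vitali counts over the finitely many density-drop layers yields the bound $\sum_a r_a^j + \sum_b r_b^j + \mathscr{H}_P^j(\bigcup_a\ccc_{0,a})\le C(n,\alpha,\Lambda,\delta,\eta)\,r^j$ in (c). For (d), the residual set $S_0$ consists of points that escape every output ball and every neck region at every scale, so each of them admits only $(j',\cdot,s)$-cylindrical approximations with $j'\le j-1$ at arbitrarily small scales, forcing $S_0\subset\MS^{j-1}$. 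For (e), Lemma \ref{lem:regu} shows every point of a neck region outside $\ccc_{0,a}$ is smooth at a definite scale and cannot lie in $\MS^j_\epsilon$, while choosing $\eta$ small enough compared to $\epsilon$ ensures every type-(b) ball is $(j',\epsilon,s)$-cylindrical at its center with $j'>j$, so $\pball(Y_b,r_b)$ contains no point of $\MS^j_\epsilon$ either.

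The main obstacle I expect is verifying the neck axioms (n1)--(n4) throughout the assembly, in particular (n4), which demands that every point of $L_Y$ near $Y$ be close to some center. This calls for a closure step: if a point on $L_Y$ were separated from $\ccc$, one would apply Theorem \ref{quantitativeuniq} along a chain of scales to produce a new center there, contradicting maximality of the chosen packing. The associated bookkeeping of constants --- $\beta\ll c_n$, $\eta\ll\delta$, $\theta$ much smaller than either, and $\delta$ small enough for Theorem \ref{quantitativeuniq} to apply --- is the delicate part, and it is what ultimately forces the hypothesis $\tf(X)>4\zeta^{-2}r^2$ with $\zeta$ depending on all the internal parameters.
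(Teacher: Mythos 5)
Your overall architecture matches the paper's (ball types, the one-step decompositions of Propositions \ref{decompositioncball} and \ref{decompositiondball}, Ahlfors regularity via Proposition \ref{prop:ahlfors} and Theorem \ref{neckstructurethm}, then iteration), but there is a genuine structural gap in how you make the iteration close up and terminate. First, your classification is based on the set of $(j,\delta,s)$-cylindrical points at a single scale, whereas the paper's $c$/$d$/$e$ trichotomy in Definition \ref{allkindsofballs} is based on the pinching set $\VV_{\zeta,s}(Y)$ of points whose Gaussian density at scale $(\zeta s)^2$ is still close to the running supremum $\bar V$. This is not a cosmetic difference: the neck axiom (n2) demands density pinching from the top scale $\delta^{-1}r^2$ all the way down to $\delta r_Y^2$, and it is only through this pinching (fed into Theorem \ref{quantitativeuniq}) that one gets the fixed-axis property (n3) at \emph{all} intermediate scales and the propagation of pinching to the new centers chosen at each Vitali refinement; one-scale cylindricality of the chosen centers does not yield a legitimate neck region, so the Ahlfors/content machinery you invoke would not apply to your construction as described. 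Second, and relatedly, you have no analogue of the $e$-balls (balls on which $\Theta(\cdot,(\zeta s)^2)\le \bar V-\zeta/2$ everywhere), and you attach the density-drop finiteness to the type-(iii) subdivision. That is the wrong place: the $(j-1)$-plane concentration recursion ($d$-balls) carries no density drop and is generically \emph{infinite}; its content is controlled solely by the geometric factor $C(n)\beta$ per level, and its limit points form the residual set in $\MS^{j-1}$. The density-drop mechanism is what terminates a \emph{separate, outer} iteration: inside a density-dropped ball one must restart the whole inductive decomposition with a strictly smaller $\bar V$, and since $\bar V$ is bounded by $\Lambda$ and drops by $\zeta/2$ per restart, only $O(\Lambda/\zeta)$ rounds occur. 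Without the $e$-type and this restart device your classification is either not exhaustive (relative to a fixed $\bar V$) or your finiteness claim "only boundedly many density-drop layers of type-(iii) can occur" is false as stated; also the assertion that a ball without density drop "must be of type (i) or (ii)" is incorrect, since the pinched points may well be trapped near a $(j-1)$-plane.

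Two secondary points are thinner than they need to be. For (d), "escaping every output ball at every scale" does not by itself force membership in $\MS^{j-1}$: one must rule out a tangent flow in $\CC^k$ with $k\ge j$ by the paper's contradiction argument, where cylindricality at the limit point spreads density-pinched points along a $k$-dimensional axis and contradicts the non-independence defining the ancestor $d$-balls. For (e), knowing that some point of $\pball(Y_b,4r_b)$ is $(j',\eta,r_b)$-cylindrical with $j'>j$ does not immediately exclude points of $\MS^j_\ep$ from $\pball(Y_b,r_b)$; you need the two-case argument (a point near the axis $L_{Y_b'}$ inherits $(j',\Psi,\cdot)$-cylindricality, a point far from the axis is $(n,\Psi,\cdot)$-cylindrical at a definite smaller scale), in addition to Lemma \ref{lem:regu} for the neck portion.
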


Assuming Theorem \ref{neckdecomposition}, we derive the following structural description of the singular stratum:

\begin{cor} \label{cor:quali}
Under the same assumptions as in Theorem \ref{neckdecomposition}, 
\begin{equation*}
\pball(X, r) \bigcap(\mathcal{S}^j \setminus \MS^{j-1})
\end{equation*}
is contained in countably many compact $j$-dimensional Lipschitz graphs.
\end{cor}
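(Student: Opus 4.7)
The plan is to combine Proposition \ref{prop:decom1}, which gives $\MS^j = \bigcup_{\ep > 0} \MS^j_\ep$, with Theorem \ref{neckdecomposition} and the neck structure Theorem \ref{neckstructurethm} in order to cover $\pball(X,r) \cap (\MS^j \setminus \MS^{j-1})$ by countably many compact $j$-dimensional Lipschitz graphs. The content estimate (c) will supply countability, property (d) will absorb the residual set, property (e) will confine the quantitative stratum to the neck centers, and Theorem \ref{neckstructurethm} will upgrade each center to a Lipschitz graph.

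First, I would fix a sequence $\ep_k \searrow 0$ and, for each $k$, choose auxiliary parameters $\eta_k$ and $\delta_k$ small enough that Theorem \ref{neckdecomposition}(e) applies with $\ep = \ep_k$ and that the hypotheses of Theorem \ref{neckstructurethm} are satisfied on every resulting neck. Applying Theorem \ref{neckdecomposition} at $X$ with radius $r$ then yields neck regions $\nnn^{(k)}_a = \pball(Y^{(k)}_a, 2r^{(k)}_a) \setminus \pball_{r_Y^{(k)}}(\ccc^{(k)}_a)$ with centers $\ccc^{(k)}_a = \ccc^{(k)}_{+,a} \cup \ccc^{(k)}_{0,a}$, additional balls $\pball(Y^{(k)}_b, r^{(k)}_b)$, and a residual set $S_0^{(k)} \subset \MS^{j-1}$. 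Combining (e) with (d) to absorb $S_0^{(k)}$ into $\MS^{j-1}$ gives
\begin{equation*}
\pball(X, r) \cap (\MS^j_{\ep_k} \setminus \MS^{j-1}) \subset \bigcup_a \ccc^{(k)}_{0,a} \subset \bigcup_a \overline{\ccc^{(k)}_a}.
\end{equation*}

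By the content estimate $\sum_a (r_a^{(k)})^j \le C r^j$ in (c), at most countably many indices $a$ occur. Theorem \ref{neckstructurethm} shows that each $\ccc^{(k)}_a$ projects bilipschitzly onto a bounded subset of a $j$-plane, and since the bilipschitz estimate \eqref{bilipschitz} passes to limit points, $\overline{\ccc^{(k)}_a}$ is a compact $j$-dimensional Lipschitz graph. Taking the union over $k$ and using $\MS^j = \bigcup_k \MS^j_{\ep_k}$ then delivers the desired countable covering. The main technical point is only to coordinate the parameter choices $(\eta_k, \delta_k)$ so that all smallness thresholds of Theorems \ref{neckdecomposition} and \ref{neckstructurethm} are met simultaneously, which is easily arranged independently for each $k$.
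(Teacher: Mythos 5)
Your proposal is correct and follows essentially the same route as the paper: apply Theorem \ref{neckdecomposition}(d),(e) for a sequence $\ep_k \to 0$ to confine $\pball(X,r)\bigcap(\MS^j_{\ep_k}\setminus\MS^{j-1})$ to the neck centers $\ccc_{0,a}$, use $\MS^j=\bigcup_k\MS^j_{\ep_k}$, and invoke Theorem \ref{neckstructurethm} to see each center is a compact $j$-dimensional Lipschitz graph. The only cosmetic difference is that you pass to closures $\overline{\ccc^{(k)}_a}$ and cite the content estimate (c) for countability, whereas the paper notes directly that each $\ccc_{0,a_k}$ is already compact and the index set is countable by construction.
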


\begin{proof}
Let $\ep_k \to 0$. By Theorem \ref{neckdecomposition} parts (d) and (e), we have
\begin{equation*}
\pball(X, r) \bigcap(\mathcal{S}_{\ep_k}^j \setminus \MS^{j-1}) \subset \bigcup_{a_k} \ccc_{0,a_k},
\end{equation*}
where each $\ccc_{0,a_k}$ is part of the center of a $(j, \delta_k, r_{a_k})$-neck region. Since $\MS^j=\bigcup_{k} \MS^j_{\ep_k}$, it follows that
\begin{equation*}
\pball(X, r) \bigcap(\mathcal{S}^j \setminus \MS^{j-1}) \subset \bigcup_k \bigcup_{a_k} \ccc_{0,a_k}.
\end{equation*}
From Theorem \ref{neckstructurethm}, each $\ccc_{0,a_k}$ is a compact $j$-dimensional Lipschitz graph. This completes the proof.
\end{proof}

Note that for any $\MM \in \mathcal E(n,\alpha, \Lambda)$, we have $\MM \subset \pball(\vec{0}^{n+1,1}, r)$ for a sufficiently large $r$ depending on the initial hypersurface. Consequently, Theorem \ref{thm:quali} follows directly from Corollary \ref{cor:quali} by a covering argument. The only additional point is that $\MS^{n-1} \setminus \MS^{n-2}$ is compact.

Before giving the proof of Theorem \ref{neckdecomposition}, let us provide an example to illustrate the theorem.

\begin{exmp}
We consider the standard shrinking cylinders $\MM=\{\R^k\times S^{n-k}(\sqrt{2(n-k)|t|})\}_{t\leq 0}$ with $X=\vec{0}^{n+1,1}$. The neck decomposition of $\pball (X,1)$ can be obtained as follows with three different cases. 
\begin{enumerate}
\item \textbf{Case} $0\leq j\leq k-1$: The decomposition consists of a single parabolic ball $\pball(Y_b, r_b)=\pball (X,1)$. In particular, $X$ is $(k, \eta, r_b)$-cylindrical for any $\eta>0$.

\item \textbf{Case} $j=k$: We set $\pball(Y_a,r_a)=\pball(X,1), \nnn_a=\pball(Y_a,2 r_a)\setminus \ccc_0$ with $\ccc=\ccc_0=\R^k\times \{\vec 0^{n+1-k}\}\times \{0\}$. In particular, $\nnn_a$ is a $(j, \delta, r_a)$-neck region for any $\delta>0$.

\item \textbf{Case} $j>k$: We set $S_0=\R^k\times \{\vec 0^{n+1-k}\}\times \{0\}$. For a fixed $\eta>0$ and for each $r_b=2^{-b}$, we choose a cover $\{\pball(Y_{r_b,l}, \sigma r_b)\}_{1\leq l \leq N_{r_b}}$ of
\begin{equation*}
\pball(X,1) \bigcap \MM \bigcap \lc \pball_{2r_b}(S_0)\setminus \overline{\pball}_{r_b}(S_0)\rc,
\end{equation*}
such that $Y_{r_b,l} \in \MM$ and $\{\pball(Y_{r_b,l}, \sigma r_b/2)\}$ are pairwise disjoint. Here, $\sigma= \sigma(n,\eta)$ is chosen such that each $Y_{r_b,l}$ is $(n,\eta,\sigma r_b)$-cylindrical. Moreover, $N_{r_b}\leq C(n,\eta)r_b^{-k}$. Therefore, we have the following decomposition:
\begin{align*}
\pball(X,1)\bigcap \MM \subset \lc \bigcup_{0<r_b=2^{-b}\leq1}\bigcup_{l=1}^{N_{r_b}}\pball(Y_{r_b,l}, \sigma r_b) \rc \bigcup S_0.
\end{align*}
Additionally, the content estimate holds since
\begin{align*}
\sum_{0<r_b=2^{-b}\leq 1}\sum_{l=1}^{N_{r_b}}r_b^j\leq C(n, \eta)\sum_{0<r_b=2^{-b}\leq 1}r_b^{-k}r_b^j=C(n, \eta)\sum_{0\leq b<\infty}2^{-b(j-k)} =C(n, \eta).
\end{align*}
\end{enumerate}
\end{exmp}

The remainder of this section is devoted to proving Theorem \ref{neckdecomposition}. Without loss of generality, we assume $r=1$. Throughout, we will use the following notation:
$$\bar V_{Y,r}(\zeta):= \sup_{Z\in \pball(Y,4r) \bigcap \MM}\Theta(Z,\zeta^{-2}r^2),$$
where $\zeta$ is a small parameter.

\begin{defn}[Pinching set]
Given $X \in \MM$ and $\zeta>0$, we set $\bar V :=\bar V_{X,1}(\zeta)$ and for any $Y\in\MM\bigcap\pball(X,2)$,
\begin{equation*}
\VV_{\zeta,r}(Y):=\{Z\in \pball(Y,4r) \bigcap \MM \mid \Theta( Z,(\zeta r)^2)>\bar V-\zeta\}.
\end{equation*}
\end{defn}

\begin{defn}[Different types of balls]\label{allkindsofballs} Fix $j\in \{0,1,\ldots,n-1\}, \zeta>0,\eta>0$ and $\beta>0$, we define the following different types of balls centered at some point in $\MM$.
\begin{enumerate} [label=(\alph*)]
\item A ball $\pball(Y_a,r_a)$ is called an $a$-ball if $\nnn_a=\pball(Y_a, 2 r_a)\setminus \pball_{r_{a,Y}}(\ccc_a)$ is a $(j,\delta, r_a)$-neck region;
\item A ball $\pball(Y_b,r_b)$ is called a $b$-ball if \bballY;
\item A ball $\pball(Y_c,r_c)$ is called a $c$-ball if it is not a $b$-ball and there exists $Z \in \mathcal{V}_{\zeta, r_c}(Y_c)$ such that $\Theta(Z, (\zeta r_c)^2) > \bar V-\zeta/2$, and $\mathcal{V}_{\zeta, r_c}(Y_c)$ is $(j,\beta,4r_c)$-independent (cf. Definition \ref{independentpoints});
\item A ball $\pball(Y_d,r_d)$ is called a $d$-ball if it is not a $b$-ball and there exists $Z \in \mathcal{V}_{\zeta, r_d}(Y_d)$ such that $\Theta(Z, (\zeta r_d)^2) > \bar V-\zeta/2$, and $\mathcal{V}_{\zeta, r_d}(Y_d)$ is not $(j,\beta,4r_d)$-independent;
\item A ball $\pball(Y_e,r_e)$ is called an $e$-ball if it is not a $b$-ball and $\Theta(Z, (\zeta r_e)^2) \le \bar V-\zeta/2$ for any $Z \in \pball(Y_e, 4 r_e) \bigcap \MM$.
\end{enumerate}
\end{defn}
Note that the constant $\eta$ is only used for defining $b$-balls, while $\beta$ is used to define $c$-balls and $d$-balls. Furthermore, every ball must belong to one of the categories: $b$-ball, $c$-ball, $d$-ball, or $e$-ball.

The following two propositions give further decompositions of $c$-balls and $d$-balls.

\begin{prop}[Decomposition of $c$-balls]\label{decompositioncball}
For $Y\in\MM,s>0$ with $\tf(Y)>4\zeta^{-2}s^2$, let $\pball(Y,s)$ be a $c$-ball with $\pball(Y,4s)\subset \pball(X,4)$. If $\delta\leq\delta(n,\alpha,\Lambda), \zeta\leq\zeta(n,\alpha,\Lambda,\delta,\beta,\eta)$ , then we have the decomposition
\begin{align*}
\pball(Y,2s)\subset &(\ccc_0\bigcup \nnn)\bigcup\bigcup_b \pball(Z_b,r_b)
\bigcup\bigcup_d \pball(Z_d,r_d) \bigcup \bigcup_e\pball(Z_e,r_e)
\end{align*}
satisfying
\begin{enumerate} [label=(\roman*)]
\item for each $b$, $\pball(Z_b,r_b)$ is a $b$-ball;
\item for each $d$, $\pball(Z_d,r_d)$ is a $d$-ball;
\item for each $e$, $\pball(Z_e,r_e)$ is an $e$-ball;
\item $\nnn := \pball(Y,2s)\setminus \lc \ccc_0\bigcup \bigcup_b \pball(Z_b,r_b) \bigcup\bigcup_d \pball(Z_d,r_d)\bigcup \bigcup_e\pball(Z_e,r_e) \rc$ is a $(j,\delta, s)$-neck region;
\item the following content estimate holds:
\begin{equation*}
\sum_b r_b^j+\sum_d r_d^j+\sum_e r_e^j+\mathscr{H}_P^j (\ccc_0)\leq C(n)s^j.
\end{equation*}
\end{enumerate}

\end{prop}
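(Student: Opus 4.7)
The plan is a Vitali-type stopping-time construction on dyadic scales $s_k = 2^{-k} s$. The center $\ccc = \ccc_+ \cup \ccc_0$ is built up by iteratively adjoining well-separated near-maximal-density pinching points whose sub-balls remain $c$-balls, and the construction is stopped (freezing the radius $r_Z$) precisely when the corresponding sub-ball falls into one of the categories $b$, $d$, or $e$. The conceptual backbone comes from the $c$-ball hypothesis combined with the quantitative rigidity of Proposition \ref{quantitativerigidity} and the quantitative uniqueness of Theorem \ref{quantitativeuniq}: for $\delta$ and $\zeta$ sufficiently small, every $Z \in \VV_{\zeta,s}(Y)$ with $\Theta(Z,(\zeta s)^2) > \bar V - \zeta/2$ is $(j,\delta',s)$-cylindrical for some $\delta' \ll \delta$ with respect to a common $j$-plane $V$ (up to small tilt), and the $(j,\beta,4s)$-independence forces the pinching set to approximately span $V$.

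Concretely, I would take $\ccc^{(0)}$ to be a maximal $c_n^3 s$-separated subset of the near-maximal-density points in $\VV_{\zeta, s}(Y) \cap \pball(Y,2s)$. Inductively at step $k \to k+1$, for each $Z \in \ccc^{(k)}$ whose radius has not yet been frozen, I examine $\pball(Z, s_{k+1})$: if this sub-ball is a $b$-, $d$-, or $e$-ball, I freeze $r_Z := s_{k+1}$, declare $Z \in \ccc_+$, and add the ball to the corresponding list of stopped balls; otherwise the sub-ball is still a $c$-ball, so I enlarge $\ccc^{(k+1)}$ by a maximal $c_n^3 s_{k+1}$-separated set of new near-maximal-density points inside $\pball(Z, 2 s_{k+1})$. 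Points that are never frozen form $\ccc_0$ (with $r_Z := 0$). By construction, every point of $\pball(Y,2s)$ lies either in $\ccc_0$, in one of the stopped balls, or in the complementary set, which becomes the neck region $\nnn$.

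The verification of (n1)--(n4) proceeds as follows. Axiom (n1) is immediate from the Vitali choice at each dyadic scale. Axiom (n2) follows from a telescoping/monotonicity argument: since $Z$ is adjoined at scale $s_k$ only when $\Theta(Z,(\zeta s_k)^2) > \bar V - \zeta$, and $\Theta(Z,\cdot)$ is monotone, the total drop $\Theta(Z,\delta^{-1}s^2) - \Theta(Z,\delta r_Z^2)$ is bounded by $\zeta$, which is less than $\delta^2$ once $\zeta \leq \zeta(\delta)$. Axiom (n3) then follows from (n2) via Theorem \ref{quantitativeuniq}. For (n4), the backward inclusion $\ccc \cap \pball(Z,\rho) \subset \pball_{c_n\rho}(L_Z)$ follows from (n3) together with Lemma \ref{lem:x01}; the forward inclusion $L_Z \cap \pball(Z,\rho) \subset \pball_{c_n\rho}(\ccc)$ is where the $(j,\beta)$-independence is essential, guaranteeing that at every still-unfrozen scale the pinching set contains enough points to approximately span $V_Z$, so that no gap can develop along the plane. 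Once (n1)--(n4) are in place, the content estimate is a direct consequence of the Ahlfors regularity of the packing measure established in Proposition \ref{prop:ahlfors}, since each stopped $b/d/e$-ball is centered at a point of $\ccc_+$ contributing $r_Z^j$ to $\mu$.

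The main obstacle is the forward inclusion in (n4). The $(j,\beta)$-independence is available as a hypothesis only at the top scale $s$, so it must be propagated through the construction, and the first sub-scale at which it fails must coincide with a transition to a $d$-ball. This propagation is precisely the reason the stopping criterion uses the $c$-ball / $d$-ball dichotomy: as soon as the pinching set at some sub-scale ceases to be $(j,\beta)$-independent, the construction halts and that sub-ball is classified as a $d$-ball, which prevents the center from collapsing onto a lower-dimensional skeleton. Orchestrating this stopping with the Vitali selection so that $\ccc$ genuinely realises (n4) -- in particular, so that the neck stays ``fat'' enough along every direction of $V_Z$ at every scale -- is the main technical content and will be carried out using Theorem \ref{quantitativeuniq} to transport both cylindricity and the common $j$-plane across scales, together with a standard cone/covering argument converting $(j,\beta)$-independence into a uniform lower density bound.
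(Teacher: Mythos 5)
Your construction differs from the paper's (the paper does not take separated subsets of the pinching set at dyadic scales; it fixes one high-density point $Y_1$, takes a Vitali cover of the approximate axis plane $L_{Y_1}\bigcap\pball(Y,4)$ by flow points lying within $\ep$ of that plane at the fixed scale ratio $\gamma=1/100$, classifies those balls, and recurses only into the $c$-balls, re-choosing fresh centers near the updated plane at each stage), and in your version the two places you flag as delicate are in fact where the argument breaks.

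First, the forward inclusion in (n4) cannot come from $(j,\beta)$-independence. Independence (Definition \ref{independentpoints}) only says the pinching set is not confined to a $\beta$-neighborhood of some $(j-1)$-plane; it gives directional spanning, not metric density along the $j$-plane, so ``no gap can develop along the plane'' does not follow --- a set of $j+1$ well-spread points is $(j,\beta)$-independent and yet leaves holes of definite size in $L_Z$. The mechanism that actually produces $c_n$-density of $\ccc$ along the plane is the transfer of Gaussian density along the axis of a cylindrical point: since $Y_1$ is $(j,\Psi(\zeta),s)$-cylindrical and close to maximal density over the relevant range of scales, every flow point within $\ep$ of $L_{Y_1}$ satisfies $|\Theta(\cdot,\tau)-\bar V|\le\delta^3$ on that range, so one may (and the paper does) choose the center as a Vitali cover of the plane itself; independence is used only once, at the top scale, to force the cylindrical dimension $j'=j$. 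Without this density-transfer step your center, built from a maximal separated subset of $\VV_{\zeta,\cdot}$, is not known to track $L_Z$ at every scale, and (n4) is unproved.

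Second, your verification of (n2) by telescoping from the adjunction scale does not reach $\delta r_Z^2$. A point $Z$ adjoined at scale $s_k$ is only certified by $\Theta(Z,(\zeta s_k)^2)>\bar V-\zeta$; if $Z$ survives many dyadic generations before being frozen at $r_Z\ll s_k$, monotonicity gives no lower bound on $\Theta(Z,\delta r_Z^2)$, and the fact that $\pball(Z,s_{k+1})$ remains a $c$-ball only guarantees \emph{some} high-density point in $\VV_{\zeta,s_{k+1}}(Z)$, not that $Z$ itself stays pinched. So (n2) can fail for long-surviving centers, and patching it by freezing such $Z$ early produces removed balls that are $c$-balls, violating the statement that only $b$-, $d$-, $e$-balls appear. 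The paper avoids both problems simultaneously: at each stage it picks a new pinched point $Y_{c^1,1}$ inside each surviving $c$-ball, shows its plane is $\ep$-close to the previous one (see \eqref{eq:cons3}), and re-verifies $|\Theta(Y_{i^l},\tau)-\bar V|<\delta^3$ for all $\tau\in[(\delta\gamma^l)^2,\delta^{-2}]$ for the freshly chosen centers near the new plane; the final center is the union of the stopped balls' centers with the Hausdorff limit $\ccc_0$ of the $c$-ball centers, and the content estimate then follows from Proposition \ref{prop:ahlfors} as you indicate. Your outline would need both of these ingredients (density transfer along the axis, and scale-by-scale re-certification of the pinching with plane coherence) to be a complete proof.
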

\begin{proof}
Without loss of generality, let us assume $s=1$. Since $\pball(Y,1)$ is not a $b$-ball, any $Z \in \mathcal{V}_{\zeta,1}(Y) $ is $(j',\Psi(\zeta),1)$-cylindrical for some $j'\in\{0,1,\ldots,j\}$. We fix $Y_1 \in \mathcal{V}_{\zeta,1}(Y)$. Then, similar to Lemma \ref{lem:x01}, it can be shown that all $Z \in \mathcal{V}_{\zeta,1}(Y) $ are $(j',\Psi(\zeta),1)$-cylindrical for the same $j'$. Moreover, we choose a small constant $\ep>0$, to be determined later, such that if $\zeta$ is sufficiently small, 
\begin{align*}
\mathcal{V}_{\zeta,1}(Y)\subset \pball_{\ep} (L_{Y_1}).
\end{align*}
Since $\mathcal{V}_{\zeta, 1}(Y)$ is $(j,\beta,4)$-independent, we conclude that $j'=j$. In particular, $Y_1$ is $(j,\Psi(\zeta),1)$-cylindrical with respect to $L_{Y_1}$.

We set $\gamma=\frac{1}{100}$ and choose a Vitali cover $\{\pball(Y_{i^1}, c_n^2\gamma)\}$ of $L_{Y_1}\bigcap \pball (Y,4)$ such that $\{\pball(Y_{i^1}, c_n^3\gamma)\}$ are pairwisely disjoint and $Y_{i^1}\in \pball_{\ep}(L_{Y_1}) \bigcap \MM$. Define
$$\nnn^1 := \pball (Y,2)\setminus\bigcup_{i^1}\pball (Y_{i^1},\gamma),$$
then one can verify that if $\zeta\leq\zeta(n,\alpha,\Lambda,\delta,\beta,\eta, \ep)$, $\nnn^1$ is $(j,\delta,1)$-neck region with center $\{Y_{i^1}\}$ and radius function $r_{Y_{i^1}}\equiv \gamma$. In fact, property $(n1)$ follows directly from the construction. For $(n2)$, note that by our choice of $Y_1$,
$$|\Theta (Y_1,\tau)-\bar V|\leq \delta^4 \ \ \ \forall \tau\in [(\delta \gamma)^2,\delta^{-2}].$$
Moreover, since $Y_1$ is $(j,\Psi(\zeta),1)$-cylindrical and $Y_{i^1} \in \pball_{\ep} (L_{Y_1})$, we know that each $Y_{i^1}$ satisfies
$$|\Theta(Y_1,\tau)-\Theta (Y_{i^1},\tau)|\leq \delta^4, \ \ \ \forall \tau\in [(\delta \gamma)^2,\delta^{-2}],$$
if $\ep$ and $\zeta$ are sufficiently small. Combining the above two inequalities, we get
\begin{align*}
|\Theta (Y_{i^1},\tau)-\bar V|\leq \delta^3, \ \ \ \forall \tau\in [(\delta \gamma)^2,\delta^{-2}],
\end{align*}
which establishes $(n2)$. Property $(n3)$ also follows from our construction and Theorem \ref{quantitativeuniq}. Finally, $(n4)$ follows from Lemma \ref{lem:x01} and our construction, provided that $\ep$ and $\zeta$ are sufficiently small.

We can now reorganize this cover according to the types of balls:
$$\pball(Y,2)\subset \nnn^1\bigcup\bigcup_{b^1} \pball(Y_{b^1}, \gamma)\bigcup\bigcup_{c^1}\pball(Y_{c^1},\gamma)\bigcup\bigcup_{d^1}\pball(Y_{d^1}, \gamma)\bigcup\bigcup_{e^1}\pball(Y_{e^1},\gamma).$$
In addition, we have the following content estimate from Proposition \ref{prop:ahlfors}:
$$\sum_{b^1}\gamma^j+\sum_{c^1}\gamma^j+\sum_{d^1}\gamma^j+\sum_{e^1}\gamma^j\leq C(n).$$

Now for each $c$-ball $\pball(Y_{c^1},\gamma)$, we fix a point $Y_{c^1,1} \in \mathcal{V}_{\zeta, \gamma}(Y_{c^1})$, which is $(j, \Psi(\zeta),\gamma)$-cylindrical with respect to $L_{Y_{c^1,1}}$. Since $Y_{c^1}$ is $(j,\Psi(\zeta),1)$-cylindrical, we have
\begin{align} \label{eq:cons3}
L_{Y_{c^1}}\subset \pball_{\ep} (L_{Y_{c^1,1}}).
\end{align}

Now, we repeat the above argument and choose a Vitali cover $\{\pball(Y_{i^2},c_n^2\gamma)\}$ of
$$ \bigcup_{c^1} \lc L_{c^1, 1} \bigcap\pball(Y_{c^1}, 4 \gamma) \rc \setminus \lc \bigcup_{b^1} \pball(Y_{b^1}, \gamma)\bigcup\bigcup_{d^1}\pball(Y_{d^1},\gamma)\bigcup_{e^1}\pball(Y_{e^1},\gamma) \rc $$
such that $\{\pball(Y_{i^2},c_n^3\gamma)\}$ are pairwise disjoint and $Y_{i^2}\in\MM\bigcap \pball_{\ep \gamma}\lc \bigcup_{c^1} ( L_{c^1, 1} \bigcap\pball(Y_{c^1}, 4 \gamma))  \rc$. Following the same reasoning as above and using \eqref{eq:cons3}, we obtain
\begin{align*}
|\Theta(Y_{i^2},\tau)-\bar V|<\delta^3, \ \ \ \forall \tau\in [ (\delta \gamma^2)^2,\delta^{-2}].
\end{align*}

We can reindex the balls according to their types. Define
$$\nnn^2 := \pball(Y,2)\setminus \lc \bigcup_{c^2}\pball(Y_{c^2},\gamma^2)\bigcup\bigcup_{1\leq k\leq 2}(\bigcup_{b^k}\pball(Y_{b^k},\gamma^k)\bigcup_{d^k}\pball(Y_{d^k},\gamma^k)\bigcup\bigcup_{e^k} \pball(Y_{e^k},\gamma^k) \rc.$$
Then, we can rewrite the decomposition as 
$$\pball(Y,2)\subset \nnn^2\bigcup\bigcup_{c^2}\pball(Y_{c^2},\gamma^2)\bigcup\bigcup_{1\leq k\leq 2} \lc \bigcup_{b^k}\pball(Y_{b^k},\gamma^k)\bigcup_{d^k}\pball(Y_{d^k},\gamma^k)\bigcup\bigcup_{e^k} \pball(Y_{e^k},\gamma^k) \rc.$$

As reasoned above, $\nnn^2$ is a neck region with a center consisting of the centers of the above balls and the radius function chosen to match the corresponding radii of these balls. By Proposition \ref{prop:ahlfors}, the following content estimate holds:
$$\sum_{1\leq k\leq 2} \lc \sum_{b^k}(\gamma^k)^j+\sum_{d^k}(\gamma^k)^j+\sum_{e^k}(\gamma^k)^j \rc+\sum_{c^2}(\gamma^2)^j \leq C(n).$$

Repeating the above decomposition for $l$-steps, we obtain the neck region:
denote by $$\nnn^l:=\pball(Y,2)\setminus \lc \bigcup_{c^l}\pball(Y_{c^l},\gamma^l)\bigcup\bigcup_{1\leq k\leq l}(\bigcup_{b^k}\pball(Y_{b^k},\gamma^k)\bigcup_{d^k}\pball(Y_{d^k},\gamma^k)\bigcup\bigcup_{e^k} \pball(Y_{e^k},\gamma^k) \rc,$$
together with the decomposition:
$$\pball(Y,2)\subset \nnn^l\bigcup\bigcup_{c^l}\pball(Y_{c^l},\gamma^l)\bigcup\bigcup_{1\leq k\leq l} \lc \bigcup_{b^k}\pball(Y_{b^k},\gamma^k)\bigcup_{d^k}\pball(Y_{d^k},\gamma^k)\bigcup\bigcup_{e^k} \pball(Y_{e^k},\gamma^k) \rc.$$
At each step, we have 
$$|\Theta(Y_{i^l},\tau)-\bar V|<\delta^3, \quad \forall \tau\in [(\delta \gamma^l)^2,\delta^{-2}].$$
Additionally, by Proposition \ref{prop:ahlfors}, the following content estimate holds:
\begin{align} \label{eq:cons5}
\sum_{1\leq k\leq l} \lc \sum_{b^k}(\gamma^k)^j+\sum_{d^k}(\gamma^k)^j+\sum_{e^k}(\gamma^k)^j \rc+\sum_{c^l}(\gamma^l)^j\leq C(n).
\end{align}

Now, we set $\ccc^l :=\{Y_{c^l}\}$. Then, by \eqref{eq:cons5}, we have
\begin{align} \label{eq:cons5a}
\vol(\pball_{4\gamma^l}(\ccc^l)) \le C(n) \sum_{c^l}(\gamma^l)^{n+3} \leq C(n) \gamma^{l(n+3-j)}.
\end{align}
It is clear from the construction that $\ccc^{l+1} \subset \pball_{4\gamma^l}(\ccc^l)$. We denote the Hausdorff limit of $\ccc^l$ by $\ccc_0$. Then, by \eqref{eq:cons5a}, we have
\begin{align}\label{eq:cons5b}
\vol(\pball_{4\gamma^l}(\ccc_0)) \le C(n) \gamma^{l(n+3-j)}.
\end{align}
and hence $\mathscr{H}_P^j(\ccc_0) \le C(n)$.

Moreover, we have
$$\pball(Y,2)\subset (\nnn\bigcup\ccc_0)\bigcup\bigcup_{1\leq k<\infty}\lc \bigcup_{b^k}\pball(Y_{b^k},\gamma^k)\bigcup_{d^k}\pball(Y_{d^k},\gamma^k)\bigcup\bigcup_{e^k} \pball(Y_{e^k},\gamma^k) \rc$$
where
$$\nnn := \pball(Y,2)\setminus \lc \ccc_0\bigcup\bigcup_{1\leq k<\infty}(\bigcup_{b^k}\pball(Y_{b^k},\gamma^k)\bigcup_{d^k}\pball(Y_{d^k},\gamma^k)\bigcup\bigcup_{e^k} \pball(Y_{e^k},\gamma^k) \rc.$$

From our construction, it follows that $\nnn$ is a $(j,\delta, 1)$-neck region. Additionally, by Proposition \ref{prop:ahlfors}, the following content estimate holds:
$$\sum_{1\leq k<\infty} \lc \sum_{b^k}(\gamma^k)^j+\sum_{d^k}(\gamma^k)^j+\sum_{e^k}(\gamma^k)^j \rc+\mathscr{H}_P^j(\ccc_0)\leq C(n).$$
In summary, we have completed the proof.
\end{proof}

\begin{prop}[Decomposition of $d$-balls]\label{decompositiondball}
For $Y\in\MM,s>0$ with $\tf(Y)>4\zeta^{-2}s^2$, let $\pball(Y,s)$ be a $d$-ball with $\pball(Y,4s)\subset \pball(X,4)$. If $\delta\leq\delta(n,\alpha,\Lambda)$, $\beta \le \beta(n)$ and $\zeta\leq\zeta(n,\alpha,\Lambda,\delta,\beta,\eta)$, then
\begin{equation*}
\pball(Y,s) \bigcap \MM \subset \bigcup_b \pball(Z_b,r_b)\bigcup\bigcup_c\pball(Z_c, r_c)\bigcup\bigcup_e\pball(Z_e,r_e)\bigcup\tilde S
\end{equation*}
satisfying
\begin{enumerate} [label=(\roman*)]
\item for each $b$, $\pball(Z_b,r_b)$ is a $b$-ball;
\item for each $c$, $\pball(Z_c,r_c)$ is a $c$-ball;
\item for each $e$, $\pball(Z_e,r_e)$ is an $e$-ball;
\item $\tilde S \subset \MS^{j-1}$ with $\mathscr{H}_P^j(\tilde S)=0$;
\item the following content estimates hold:
\begin{align*}
\sum_b r_b^j+\sum_e r_e^j\leq C(n,\beta)s^j \quad \text{and} \quad \sum_c r_c^j\leq C(n)\beta s^j.
\end{align*}
\end{enumerate}
\end{prop}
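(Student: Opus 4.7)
The plan is to isolate the concentration of the pinching set near a $(j{-}1)$-plane, classify the remaining covering balls, and iterate on surviving $d$-subballs until the residual set drops into $\MS^{j-1}$. First, by Definition \ref{allkindsofballs}(d) and Definition \ref{independentpoints} there exists a point $Z_0\in\VV_{\zeta,s}(Y)$ with $\Theta(Z_0,(\zeta s)^2)>\bar V-\zeta/2$ together with an affine $(j{-}1)$-plane $V\subset\R^{n+1}$ (in the time slice $\{\tf=\tf(Y)\}$) whose spatial $4\beta s$-neighborhood contains the spatial projection of $\VV_{\zeta,s}(Y)$. Moreover, since $\pball(Y,s)$ is not a $b$-ball, no point of $\pball(Y,4s)\cap\MM$ is $(j',\eta,s)$-cylindrical for any $j'>j$.

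Second, I take a Vitali cover $\{\pball(W_i,\beta s)\}$ of $\pball(Y,s)\cap\MM$ with centers $W_i\in\MM$ and pairwise-disjoint half-radii, and split it into \emph{near-$V$} balls (centers within spatial distance $8\beta s$ of $V$) and \emph{far-from-$V$} balls. For any far-from-$V$ center $W_i$, every $W\in\pball(W_i,4\beta s)\cap\MM$ satisfies $\mathrm{dist}(\ef(W),V)>4\beta s$, so $W\notin\VV_{\zeta,s}(Y)$, giving $\Theta(W,(\zeta s)^2)\le\bar V-\zeta$; monotonicity (Proposition \ref{prop:mono}) then propagates this to $\Theta(W,(\zeta\beta s)^2)\le\bar V-\zeta<\bar V-\zeta/2$, so by Definition \ref{allkindsofballs} the ball is either a $b$- or an $e$-ball, and these contribute at most $C(n,\beta)s^j$ to the total $j$-content. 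Any near-$V$ ball that is a $c$- or $d$-ball must harbor a pinching point, which by monotonicity lies in $\VV_{\zeta,s}(Y)$ and hence in the $4\beta s$-tube of $V$; its center therefore lies in the $8\beta s$-tube of the $(j{-}1)$-plane $V$ within a thin time slab, and the disjointness of half-radii forces at most $C(n)\beta^{-(j-1)}$ such centers. Consequently the $c$-balls at this stage contribute $\le C(n)\beta^{-(j-1)}(\beta s)^j=C(n)\beta\,s^j$ to the $j$-content.

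Third, I iterate on each surviving $d$-subball at its new scale $\beta s$. At every iteration a $d$-ball of radius $r$ yields $b$-, $c$-, $e$-balls along with $d$-balls of radius $\beta r$ whose combined $j$-content is $\le C(n)\beta\cdot r^j$ by the count above. Choosing $\beta\le\beta(n)$ so that $C(n)\beta<1/2$ makes the resulting geometric series converge, yielding $\sum_c r_c^j\le C(n)\beta\,s^j$ and $\sum_b r_b^j+\sum_e r_e^j\le C(n,\beta)\,s^j$. The residual $\tilde S$ is the intersection across all iteration levels of the surviving $d$-ball unions; its stage-$k$ $j$-content is $\le(C(n)\beta)^ks^j\to0$, so $\mathscr H_P^j(\tilde S)=0$. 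Finally, any $X\in\tilde S$ sits inside a $d$-ball at every dyadic scale, so $\Theta(X,\tau)\to\bar V$ as $\tau\to0$ and the pinching set around $X$ fails $(j,\beta,\cdot)$-independence at every scale; by Theorem \ref{quantitativeuniq} the tangent flow at $X$ must be a generalized cylinder in $\CC^{j'}$ for some $j'<j$ (otherwise the $\R^{j'}$-factor would inject a $(j,\beta,\cdot)$-independent subset into the pinching set), hence $X\in\MS^{j-1}$. The main obstacle is maintaining coherence of the reference density $\bar V$ and the Definition \ref{allkindsofballs} classification across infinitely many iteration scales, which is ensured by Theorem \ref{quantitativeuniq} and the quantitative rigidity of Proposition \ref{quantitativerigidity}.
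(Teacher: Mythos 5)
Your overall strategy mirrors the paper's proof (Vitali cover at scale $\beta s$, trap the $c$- and $d$-subballs near a $(j-1)$-plane coming from the failure of $(j,\beta,4s)$-independence, count them, iterate geometrically in $\beta$, and push the residual set into $\MS^{j-1}$), but there is a genuine gap at the crucial counting step. You only control the \emph{spatial} distance of the $c$/$d$-centers to the affine $(j-1)$-plane $V$ and then assert that these centers lie ``within a thin time slab'' without any justification. Definition \ref{independentpoints} constrains only the spatial projection of $\VV_{\zeta,s}(Y)$ onto the slice $\{\tf=\tf(Y)\}$; by itself it gives no control on the time components, so a priori the pinching set (and hence the $c$/$d$-centers) could spread over a time interval of length $\sim s^2$ rather than $\sim(\beta s)^2$. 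This matters quantitatively: if the tube is only spatial, the parabolic volume of $\{Z\in\pball(Y,s):\,\mathrm{dist}(\ef(Z),V)\le C\beta s\}$ is of order $(\beta s)^{n+2-j}s^{j+1}$, so the Vitali count of $c$/$d$-balls is $C(n)\beta^{-j-1}$ and their $j$-content is bounded only by $C(n)\beta^{-1}s^j$, not $C(n)\beta\,s^j$; the factor per iteration is then not small, the geometric series does not converge, and the whole content estimate (v) collapses. The missing ingredient is exactly what the paper uses: every $Z\in\VV_{\zeta,s}(Y)$ has $\Theta(Z,\zeta^{-2})\le\bar V$ and $\Theta(Z,(\zeta s)^2)>\bar V-\zeta$, so by Proposition \ref{quantitativerigidity} and Theorem \ref{quantitativeuniq} each pinching point is $(j',\Psi(\zeta),s)$-cylindrical; as in Lemma \ref{lem:x01}, cylindricity at comparable scales forces any two pinching points at distance $s'$ to have time difference at most $\Psi(\zeta)(s')^2$ and nearby axes, whence $\VV_{\zeta,s}(Y)\subset\pball_{5\beta s}(L)$ for a $(j-1)$-plane $L$ lying in a \emph{single time slice}. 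Only then is the relevant tube a parabolic $C\beta s$-neighborhood of $L$, with volume $C(n)(\beta s)^{n+3-(j-1)}s^{j-1}$, giving the count $C(n)\beta^{1-j}$ and the key bound $\sum_c r_c^j\le C(n)\beta\,s^j$.

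Two smaller points. First, your claim that for $Z\in\tilde S$ one has $\Theta(Z,\tau)\to\bar V$ as $\tau\to0$ is neither justified nor needed; the correct argument (which you partially sketch) is that at every scale $\beta^l$ there is a pinching point within $C\beta^l$ of $Z$, and if a tangent flow at $Z$ were in $\CC^k$ with $k\ge j$, then propagating the pinching along the (nearly) $k$-dimensional axis $L_Z$ would produce a $(j,\beta,4\beta^l)$-independent subset of $\VV_{\zeta,\beta^l}(Y_{d^l})$, contradicting that these are $d$-balls; this step again relies on the cylindrical structure near $Z$, not on density convergence at $Z$ itself. Second, there is no issue of ``maintaining coherence of $\bar V$ across scales'' here: $\bar V=\bar V_{X,1}(\zeta)$ is fixed once and for all in Definition \ref{allkindsofballs}, and the classification of subballs at every scale is made with respect to this same constant, so your closing concern is moot.
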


\begin{proof}
Assume $s=1$ and choose a Vitali cover $\{\pball(Y_{i^1},\beta)\}_{Y_{i^1}\in \pball(Y,1)\bigcap\MM}$ of $\pball(Y,1)\bigcap\MM$ such that $\{\pball(Y_{i^1}, \beta/5)\}$ are pairwisely disjoint. We reorganize the cover by types:
$$\pball(Y,1) \bigcap\MM \subset\bigcup_{b^1} \pball(Y_{b^1},\beta)\bigcup\bigcup_{c^1}\pball(Y_{c^1},\beta)\bigcup\bigcup_{d^1}\pball(Y_{d^1},\beta)\bigcup\bigcup_{e^1} \pball(Y_{e^1}, \beta).$$
Using disjointness and comparing the volumes, we have
$$\sum_{b^1} \beta^j+\sum_{c^1}\beta^j+\sum_{d^1}\beta^j+\sum_{e^1}\beta^j\leq C(n)\beta^{j-n-3}=C(n,\beta).$$
Moreover, since $\pball(Y,1)$ is a $d$-ball, we know $\mathcal{V}_{\zeta,1}(Y)$ is nonempty and not $(j,\beta,4)$-independent. Since each point $Z \in \mathcal{V}_{\zeta,1}(Y)$ is $(j, \Psi(\zeta),1)$-cylindrical, it follows from Definition \ref{independentpoints} that there exists a $j-1$ affine plane $V \subset \R^{n+1}$ such that
\begin{align*}
\mathcal{V}_{\zeta,1}(Y)\subset \pball_{5\beta}(L),
\end{align*}
where $L:=V \times \{\tf(Y)\}$. Since $\mathcal V_{\zeta,\beta}(Y_{c^1}) \bigcup \mathcal V_{\zeta,\beta}(Y_{d^1}) \subset \mathcal{V}_{\zeta,1}(Y)$, we conclude
\begin{align*}
\bigcup_{c^1}\pball(Y_{c^1},\beta)\bigcup\bigcup_{d^1}\pball(Y_{d^1},\beta)\subset \pball_{8 \beta}(L).
\end{align*}

Let $K$ denote the total number of all $c$-balls and $d$-balls. Since the total parabolic volume of $\pball_\beta(L)\bigcap \pball (Y,1)$ is less than $C(n)\beta^{n+3-(j-1)}$, by the Vitali covering, we have
$$K\leq C(n)\beta^{n+3-(j-1)}\beta^{-n-3}=C(n)\beta^{-j+1}.$$
Thus, we obtain a refined estimate for all $c$-balls and $d$-balls:
\begin{equation*}
\sum_{c^1} \beta^j+\sum_{d^1}\beta^j\leq C(n)\beta^{-j+1}\beta^j=C(n)\beta.
\end{equation*}
For each $d$-ball $\pball(Y_{d^1},\beta)$, we choose a Vitali cover $\{\pball(Y_{i^2},\beta^2)\}_{Y_{i^2}\in \pball(Y_{d^1},r_{d^1})\bigcap\MM}$ of $\pball(Y_{d^1},r_{d^1})\bigcap\MM$ with $\{\pball(Y_{i^2},\beta^2/5)\}$ pairwise disjoint. We can rewrite this decomposition by types:
$$\pball(Y_{d^1},\beta)\bigcap\MM \subset\bigcup_{b^2} \pball(Y_{b^2},\beta^2)\bigcup\bigcup_{c^2}\pball(Y_{c^2},\beta^2)\bigcup\bigcup_{d^2}\pball(Y_{d^2},\beta^2)\bigcup\bigcup_{e^2} \pball(Y_{e^2},\beta^2).$$
Combining with the previous decomposition, we have:
\begin{equation*}
\pball(Y,1) \bigcap\MM\subset\bigcup_{d^2}\pball(Y_{d^2},\beta^2)\bigcup\bigcup_{1\leq k\leq 2} \lc \bigcup_{b^k}\pball(Y_{b^k},\beta^k)\bigcup\bigcup_{c^k}\pball(Y_{c^k},\beta^k)\bigcup\bigcup_{e^k}\pball(Y_{e^k},\beta^k) \rc,
\end{equation*}
with the content estimates:
\begin{align*}
&\sum_{1\leq k\leq 2}(\sum_{b^k}(\beta^k)^j+\sum_{e^k}(\beta^k)^j)\leq C(n,\beta)+C(n,\beta)(C(n)\beta)=C(n,\beta)\sum_{1\leq k\leq 2}(C(n)\beta)^{k-1};\\
&\sum_{1\leq k\leq 2}\sum_{c^k} (\beta^k)^j\leq C(n)\beta+C(n)\beta(C(n)\beta)=\sum_{1\leq k\leq 2}(C(n)\beta)^k;\\
&\sum_{d^2} (\beta^2)^j\leq (C(n)\beta)^2.
\end{align*}
Now, we repeat the above decomposition iteratively for $d$-balls. At the $l$-th step, we obtain the decomposition:
\begin{equation*}
\pball(Y,1) \bigcap\MM \subset\bigcup_{d^l}\pball(Y_{d^l},\beta^l)\bigcup\bigcup_{1\leq k\leq l} \lc \bigcup_{b^k}\pball(Y_{b^k},\beta^k)\bigcup\bigcup_{c^k}\pball(Y_{c^k},\beta^k)\bigcup\bigcup_{e^k}\pball(Y_{e^k},\beta^k) \rc,
\end{equation*}
with the content estimates:
\begin{align*}
&\sum_{1\leq k\leq l}(\sum_{b^k}(\beta^k)^j+\sum_{e^k}(\beta^k)^j)\leq C(n,\beta)\sum_{1\leq k\leq l}(C(n)\beta)^{k-1};\\
&\sum_{1\leq k\leq l}\sum_{c^k} (\beta^k)^j\leq \sum_{1\leq k\leq l}(C(n)\beta)^k;\\
&\sum_{d^l} (\beta^l)^j\leq (C(n)\beta)^l.
\end{align*}
Here, we assume $\beta$ is small enough such that $C(n)\beta\leq 1/2$, ensuring the convergence of the above estimates.

Let $\tilde S^l :=\{Y_{d^l}\}$. By our construction, it is clear that $\tilde S^{l+1}\subset \pball_{2\beta^l}(\tilde S^l)$. We denote the Hausdorff limit of $\tilde S^l$ by $\tilde S$. Then, we obtain the decomposition:
\begin{equation*}
\pball(Y,1) \bigcap\MM \subset\tilde S\bigcup\bigcup_{1\leq k<\infty} \lc \bigcup_{b^k}\pball(Y_{b^k},\beta^k)\bigcup\bigcup_{c^k}\pball(Y_{c^k},\beta^k)\bigcup\bigcup_{e^k}\pball(Y_{e^k},\beta^k) \rc
\end{equation*}
with
\begin{align*}
\sum_{1\leq k<\infty}\lc \sum_{b^k}(\beta^k)^j+\sum_{e^k}(\beta^k)^j \rc \leq C(n,\beta) \quad \text{and} \quad \sum_{1\leq k<\infty}\sum_{c^k} (\beta^k)^j\leq C(n)\beta.
\end{align*}

Finally, we show that $\tilde S\subset \MS^{j-1}$. For any $Z\in \tilde S$, choose $Y^l\in \{Y_{d^l}\}$ such that $Y^l\to Z$ in the Hausdorff sense. From our construction, we have
\begin{equation*}
\rVert Z-Y^l\rVert\leq\sum_{k=l}^\infty 2\beta^k\leq 3\beta^l.
\end{equation*}
Since $\pball(Y^l,\beta^l)$ is a $d$-ball, there exists $Z^l \in \pball(Y^l, 4\beta^l) \bigcap \MM$ such that 
\begin{equation}\label{eq:ex001}
\Theta(Z^l, (\zeta \beta^l)^2)>\bar V-\zeta/2.
\end{equation}
Additionally, we have
\begin{equation*}
\rVert Z-Z^l\rVert\leq\rVert Z-Y^l\rVert+\rVert Y^l-Z^l\rVert\leq 7\beta^l.
\end{equation*}
Now, suppose $Z$ has a tangent flow given by a cylinder in $\CC^k$ for $k \ge j$. Then, $Z$ is $(k, \Psi(l^{-1}), \beta^l)$-cylindrical with respect to $L_Z$, and moreover, $\|Z^l-L_Z\| \le \Psi(\zeta+l^{-1}) \beta^l$. It follows from \eqref{eq:ex001} that if $l$ is sufficiently large, 
\begin{equation*}
\Theta(Y, (\zeta \beta^l)^2)>\bar V-\zeta
\end{equation*}
for any $Y \in \pball(Y^l, 4\beta^l) \bigcap \pball_{\Psi(\zeta+l^{-1}) \beta^l}(L_Z) \bigcap \MM$. Moreover, for any $Y' \in \pball(Y^l, 4\beta^l) \bigcap L_Z$, there exists $Y'' \in \MM$ such that $\|Y'-Y''\| \le \Psi(\zeta+l^{-1}) \beta^l$. However, this leads to a contradiction, since $\VV_{\zeta, \beta^l}(Y^l)$ is not $(j, \beta,4 \beta^l )$-independent in $\pball(Y^l, 4 \beta^l)$.

Therefore, $\tilde S \subset \MS^{j-1}$. Furthermore, it is clear that $\mathscr{H}_P^j(\tilde S)=0$, as the parabolic dimension of $\MS^{j-1}$ is at most $j-1$.
\end{proof}

Combining Proposition \ref{decompositioncball} and Proposition \ref{decompositiondball}, we can now prove the inductive decomposition:

\begin{prop}[Inductive decomposition]\label{inductivedecomposition}
	Let $Y\in\MM, s>0$ with $\tf(Y)>4\zeta^{-2}s^2$. 
If $\delta\leq\delta(n,\alpha,\Lambda)$, $\beta \le \beta(n)$ and $\zeta\leq\zeta(n,\alpha,\Lambda,\delta,\beta,\eta)$, then we have the following decomposition
$$\pball (Y,s) \bigcap \MM \subset\bigcup_a \lc (\nnn_a\bigcup \ccc_{0,a})\bigcap\pball(Y_a,r_a) \rc \bigcup \bigcup_b \pball (Y_b,r_b)\bigcup\bigcup_e\pball(Y_e,r_e)\bigcup S_0,$$
satisfying
\begin{enumerate} [label=(\roman*)]
\item for all $a$, $\nnn_a=\pball(Y_a, 2r_a)\setminus \pball_{r_{a,Y}}(\ccc_a)$ is a $(j,\delta, r_a)$-neck region;
\item for each $b$, $\pball(Z_b,r_b)$ is a $b$-ball;
\item for each $e$, $\pball(Z_e,r_e)$ is an $e$-ball;
\item $S_0 \subset \MS^{j-1}$ with $\mathscr{H}_P^j(S_0)=0$;
\item the following content estimates hold:
\begin{align*}
\sum_a r_a^j+\sum_b r_b^j+\sum_e r_e^j+\mathscr{H}_P^j(\bigcup_a\ccc_{0,a})\leq C(n) s^j.
\end{align*}
\end{enumerate}
\end{prop}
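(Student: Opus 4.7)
The plan is an iterative covering argument that alternates between Propositions \ref{decompositioncball} and \ref{decompositiondball}. First I would classify $\pball(Y,s)$ according to Definition \ref{allkindsofballs}. If it is a $b$-ball or an $e$-ball, the desired decomposition is trivial with a single ball and $S_0 = \emptyset$. Otherwise it is a $c$- or $d$-ball; I feed it into the corresponding decomposition proposition and collect as final pieces of the output all the neck regions (the $a$-balls together with their centers $\ccc_{0,a}$), the $b$-balls, the $e$-balls, and any singular remainder $\tilde S$ produced by a $d$-decomposition. Any $c$-ball appearing in the output is fed back into Proposition \ref{decompositioncball} at the next stage, and any $d$-ball into Proposition \ref{decompositiondball}. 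This builds an infinite tree of decompositions. The set $S_0$ is defined as the union of all the $\tilde S$ pieces together with the Hausdorff limit of the shrinking family of undecomposed $c$- and $d$-balls.

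For the content bookkeeping, let $T_k$ denote the total $j$-content of the $c$- or $d$-balls remaining at stage $k$. Proposition \ref{decompositioncball} converts a $c$-ball of content $\rho^j$ into a $d$-set of total content at most $C(n)\rho^j$ plus final pieces of content at most $C(n)\rho^j$. Proposition \ref{decompositiondball} converts a $d$-ball of content $\rho^j$ into a $c$-set of total content at most $C(n)\beta\rho^j$ plus final pieces of content at most $C(n,\beta)\rho^j$. Over a full $c\to d\to c$ cycle the residual content contracts by the factor $C(n)^2\beta$. Choosing $\beta\le\beta(n)$ so that $C(n)^2\beta \le 1/2$, the residual contents $T_k$ form a geometric sequence with ratio at most $1/2$, and the total final-piece content summed over all stages is bounded by a convergent geometric series, yielding $\sum_a r_a^j + \sum_b r_b^j + \sum_e r_e^j + \mathscr{H}_P^j(\bigcup_a \ccc_{0,a}) \le C(n) s^j$ after the now-fixed $\beta = \beta(n)$ is absorbed into the constant.

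It remains to verify $S_0 \subset \MS^{j-1}$; the vanishing of $\mathscr{H}_P^j(S_0)$ will then follow from the fact that the parabolic Hausdorff dimension of $\MS^{j-1}$ is at most $j-1$. Each $\tilde S$ produced by a $d$-decomposition is already in $\MS^{j-1}$ by Proposition \ref{decompositiondball}(iv). For a point $Z$ in the Hausdorff limit, I would extract a subsequence of nested $d$-balls $\pball(Y_l, r_l)$ containing $Z$ with $r_l \to 0$ and argue exactly as in the $\tilde S\subset\MS^{j-1}$ step of the proof of Proposition \ref{decompositiondball}: the non-$(j,\beta,4r_l)$-independence of each $\mathcal V_{\zeta, r_l}(Y_l)$ together with Proposition \ref{quantitativerigidity} and the clearing-out lemma rules out a tangent flow at $Z$ that splits off an $\R^j$-factor, forcing $Z \in \MS^{j-1}$. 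The main technical subtlety is that the $\beta$-dependent constant $C(n,\beta)$ from Proposition \ref{decompositiondball} must not overwhelm the geometric contraction; this is precisely why the $\beta$-dependent and $\beta$-free content bounds are kept separate in that proposition, and once $\beta=\beta(n)$ is fixed the final constant is indeed only $C(n)$.
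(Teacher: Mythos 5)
Your proposal is essentially the paper's own argument: alternate Propositions \ref{decompositioncball} and \ref{decompositiondball}, contract the residual content geometrically by choosing $\beta\le\beta(n)$ with $C(n)\beta\le 1/2$, and take $S_0$ to be the union of the $\tilde S$ pieces together with the limit set of the residual $d$-balls, shown to lie in $\MS^{j-1}$ by the same density/non-independence contradiction as in Proposition \ref{decompositiondball}. The only cosmetic discrepancies are that in the paper's bookkeeping the residual at each full cycle consists of $d$-balls alone (so the limit set is built from $d$-ball neighborhoods, exactly as you do after passing to the $d$-ball subsequence), and that the limit-set argument does not need the clearing-out lemma, only the pinching/independence contradiction.
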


\begin{proof}
We assume $s=1$. Suppose $\pball(Y,1)$ is not a $b$-ball or $e$-ball; otherwise, we are done. If $\pball(Y,1)$ is a $d$-ball, then we apply Proposition \ref{decompositiondball} to get
$$\pball(Y,1) \bigcap \MM \subset\bigcup_{b'} \pball(Y_{b'},r_{b'})\bigcup\bigcup_{c'}\pball(Y_{c'},r_{c'})\bigcup\bigcup_{e'} \pball(Y_{e'},r_{e'})\bigcup\tilde S^1,$$
with $\tilde S^1 \subset \MS^{j-1}$ and the following content estimates:
\begin{align*}
\sum_{b'} (r_{b'})^j+\sum_{e'} (r_{e'})^j&\leq C(n),\\
\sum_{c'}(r_{c'})^j&\leq C(n)\beta.
\end{align*}
Now, for each $c$-ball $\pball(Y_{c'},r_{c'})$, we apply Proposition \ref{decompositioncball} to obtain
$$\pball(Y_{c'},r_{c'})\subset \lc (\ccc_{0}\bigcup \nnn) \bigcap \pball(Y_{c'},r_{c'}) \rc\bigcup\bigcup_{b''}\pball(Y_{b''},r_{b''})\bigcup\bigcup_{d''}\pball (Y_{d''},r_{d''})\bigcup\bigcup_{e''} \pball(Y_{e''},r_{e''}),$$
with the content estimate:
\begin{align*}
\sum_{b''} (r_{b''})^j+\sum_{d''}(r_{d''})^j+\sum_{e''} (r_{e''})^j+\mathscr{H}_P^j(\ccc_0)\leq C(n)(r_{c'})^j.
\end{align*}
Combining the above two decompositions and reindexing the balls, we obtain
\begin{align*}
\pball (Y,1) \bigcap \MM \subset &\bigcup_{a^1}\lc \ccc_{0, a^1}\bigcup \nnn_{a^1} \bigcap \pball(Y_{a^1},r_{a^1})\rc\bigcup\bigcup_{b^1} \pball(Y_{b^1},r_{b^1})\bigcup\bigcup_{d^1}\pball(Y_{d^1},r_{d^1}) \\
&\bigcup\bigcup_{e^1}\pball(Y_{e^1},r_{e^1})\bigcup \tilde S^1,
\end{align*}
with 
\begin{align*}
&\sum_{b^1} (r_{b^1})^j+\sum_{e^1} (r_{e^1})^j+\leq C(n),\\
&\sum_{a^1} \lc (r_{a^1})^j+\mathscr{H}_P^j(\ccc_{0,a^1}) \rc +\sum_{d^1} (r_{d^1})^j\leq C(n)\beta.
\end{align*}
Next, we apply the above decomposition to each $d$-ball $\pball(Y_{d^1},r_{d^1})$ to get the second decomposition:
\begin{align*}
\pball (Y,1) \bigcap \MM \subset &\bigcup_{1\leq k\leq 2} \lc (\bigcup_{a^k} (\ccc_{0,a^k}\bigcup \nnn_{a^k}\bigcap \pball (Y_{a^k},r_{a^k}))\bigcup\bigcup_{b^k} \pball(Y_{b^k},r_{b^k})\bigcup\bigcup_{e^k}\pball(Y_{e^k},r_{e^k})\bigcup \tilde S^k \rc \\&
\bigcup\bigcup_{d^2} \pball(Y_{d^2},r_{d^2}),
\end{align*}
with $\bigcup_{1\leq k\leq 2}  \tilde S^k \subset \MS^{j-1}$ and the content estimates:
\begin{align*}
\sum_{1\leq k\leq 2} \lc \sum_{a^k} (r_{a^k})^j+\sum_{b^k} (r_{b^k})^j+\sum_{e^k} (r_{e^k})^j+\sum_{a^k} \mathscr{H}_P^j(\ccc_{0,a^k}) \rc \leq C(n)+C(n)\beta,
\end{align*}
and
$$\sum_{d^2} (r_{d^2})^j\leq (C(n)\beta)^2.$$
Repeating this decomposition $l$ times, we get
\begin{align*}
\pball (Y,1)\bigcap \MM \subset &\bigcup_{1\leq k\leq l}\lc (\bigcup_{a^k}(\ccc_{0,a^k}\bigcup \nnn_{a^k}\bigcap \pball (Y_{a^k},r_{a^k}))\bigcup\bigcup_{b^k} \pball(Y_{b^k},r_{b^k})\bigcup\bigcup_{e^k}\pball(Y_{e^k},r_{e^k})\bigcup \tilde S^k \rc \\&
\bigcup\bigcup_{d^l} \pball(Y_{d^l},r_{d^l}),
\end{align*}
with $\bigcup_{1\leq k\leq l}  \tilde S^k \subset \MS^{j-1}$ and the content estimates:
\begin{align*}
\sum_{1\leq k\leq l} \lc \sum_{a^k} (r_{a^k})^j+\sum_{b^k} (r_{b^k})^j+\sum_{e^k} (r_{e^k})^j+\sum_{a^k} \mathscr{H}_P^j(\ccc_{0,a^k}) \rc \leq C(n)\sum_{j=1}^l(C(n)\beta)^{j-1}
\end{align*}
and
$$\sum_{d^l} (r_{d^l})^j\leq (C(n)\beta)^l.$$

We assume $\beta$ is small such that $C(n)\beta\leq 1/2$. Denote $\tilde S_{d^k}=\{Y_{d^k}\}$ and set $ (\tilde S_{d^k})_{10}:=\bigcup_{d^k} \pball_{10 r_{d^k}}(Y_{d^k})$. It is clear from our construction that if $\beta \le \beta(n)$, $(\tilde S_{d^{k+1}})_{10} \subset (\tilde S_{d^k})_{10}$. We set 
\begin{align*}
\tilde S_d :=\bigcap_{k\geq 1} (\tilde S_{d^k})_{10}
\end{align*}
By the same reasoning as in the proof of Proposition \ref{decompositiondball}, we conclude that $\tilde S_d \subset \MS^{j-1}$.

Now, define $S_0 := \tilde S_d\bigcup\bigcup_{k\geq 1}\tilde S^k$. It follows that $S_0\subset \MS^{j-1}$, and consequently, $\mathscr{H}_P^j(S_0)=0$. Therefore, the proof is complete in this case.

If $\pball (Y,1)$ is a $c$-ball, we first apply Proposition \ref{decompositioncball} to obtain a decomposition consisting only of $b$-balls, $d$-balls, $e$-balls and neck regions. Then, we can repeat the argument above to each $d$-ball to complete the proof.
\end{proof}

With all these preparations, we can prove Theorem \ref{neckdecomposition}.

\emph{Proof of Theorem \ref{neckdecomposition}}: We assume $r=1$. In the proof, we first fix $\beta=\beta(n)$ and then choose $\zeta=\zeta(n,\alpha,\Lambda,\delta,\beta,\eta)=\zeta(n,\alpha,\Lambda,\delta,\eta)$ so that Proposition \ref{inductivedecomposition} holds. 

Next, we choose a Vitali cover $\{\pball(Y_i, \zeta)\}_{Y_i \in \pball(X,1) \bigcap \MM} $ of $\pball(X,1) \bigcap \MM$ such that $\{\pball (Y_i, \zeta/5)\}$ are pairwisely disjoint. Note that for each $i$, 
\begin{align} \label{eq:eballes}
\bar V_{Y_i,\zeta}(\zeta)=\sup_{Z\in \pball (Y_i,4\zeta)}\Theta (Z, (\zeta^{-1}\cdot \zeta)^2)\leq \sup_{Z\in \pball(X,4)}\Theta (Z,1)=\bar V_{X,1}(1).
\end{align}

Then we apply Proposition \ref{inductivedecomposition} to each $\pball(Y_i,\zeta)$, with $\bar V$ being $ \bar V_{Y_i,\zeta}(\zeta)$, to get 
\begin{equation}\label{decom1}
	\pball(X,1) \bigcap \MM \subset\bigcup_a \lc (\nnn_a^1\bigcup\ccc_{0,a}^1)\bigcap \pball(Y_a^1,r_a^1)\rc\bigcup\bigcup_b \pball(Y_b^1,r_b^1)\bigcup\bigcup_{e'}\pball(Y_{e'}^1,r_{e'}^1)\bigcup S_0^1,
\end{equation}
with $r_a^1,r_b^1,r_{e'}^1\leq\zeta$, $S_0^1 \subset \MS^{j-1}$ and the content estimate:
\begin{equation*}
\sum_a (r_a^1)^j+\sum_b (r_b^1)^j+\sum_e (r_{e'}^1)^j+\mathscr{H}_P^j(\bigcup_a\ccc_{0,a}^1)\leq C(n,\alpha,\Lambda,\delta,\eta).
\end{equation*} 

Note that for each $e$-ball $\pball(Y^1_{e'},r_{e'}^1)$, we have
\begin{equation}\label{eballesti1}
\bar V_{Y^1_{e'}, r_{e'}^1}(\zeta^{-1})=\sup_{Z\in \pball(Y^1_{e'},4r_{e'}^1)}\Theta (Z,(\zeta r_{e'}^1)^2)\leq \sup_i \{\bar V_{Y_i,\zeta}(\zeta)\}-\zeta/2 \leq\bar V_{X,1}(1)-\zeta/2,
\end{equation} 
where, for the second inequality, we have used the definition of the $e$-ball with respect to $\bar V_{Y_i,\zeta}(\zeta)$, and the last inequality follows from \eqref{eq:eballes}. For each $\pball(Y^1_{e'},r_{e'}^1)\cap \MM$, consider a Vitali covering $\{\pball(Y^1_{e''},r_{e''}^1)\}$ with $Y^1_{e''}\in \pball(Y^1_{e'},r_{e'}^1)\cap \MM, r_{e''}^1=\zeta r_{e'}^1$ such that $\pball(Y^1_{e''},r_{e''}^1/5)$ are disjoint. By \eqref{eballesti1}, we have
\begin{equation}\label{eballesti2}
	\bar V_{Y^1_{e''}, r_{e''}^1}(1)=\bar V_{Y^1_{e'}, r_{e'}^1}(\zeta^{-1})\leq\bar V_{X,1}(1)-\zeta/2.
\end{equation}
Combining with the decomposition \eqref{decom1}, we can obtain:
\begin{equation*}
	\pball(X,1) \bigcap \MM \subset\bigcup_a \lc (\nnn_a^1\bigcup\ccc_{0,a}^1)\bigcap \pball(Y_a^1,r_a^1)\rc\bigcup\bigcup_b \pball(Y_b^1,r_b^1)\bigcup\bigcup_{e}\pball(Y_{e}^1,r_{e}^1)\bigcup S_0^1,
\end{equation*}
with $r_a^1,r_b^1,r_{e}^1\leq\zeta$, $S_0^1 \subset \MS^{j-1}$ and the content estimate:
\begin{equation*}
	\sum_a (r_a^1)^j+\sum_b (r_b^1)^j+\sum_e (r_{e}^1)^j+\mathscr{H}_P^j(\bigcup_a\ccc_{0,a}^1)\leq C(n,\alpha,\Lambda,\delta,\eta).
\end{equation*}
Moreover, for each $e$-ball $\pball(Y_{e}^1,r_{e}^1)$, it follows from \eqref{eballesti2} that
\begin{equation*}
	\bar V_{Y^1_{e}, r_{e}^1}(1)\leq\bar V_{X,1}(1)-\zeta/2.
\end{equation*}

Applying Proposition \ref{inductivedecomposition} to $\pball(Y_e^1, r_e^1)$ and repeating the above process, we obtain the following decomposition
$$\pball(Y_e^1,r_e^1)\bigcap \MM \subset\bigcup_a \lc (\nnn_a^2\bigcup\ccc_{0,a}^2)\bigcap \pball(Y_a^2,r_a^2) \rc\bigcup\bigcup_b \pball(Y_b^2,r_b^2)\bigcup\bigcup_e\pball(Y_e^2,r_e^2)\bigcup S_0^2$$
with $S_0^2 \subset \MS^{j-1}$ and the content estimate:
\begin{equation*}
\sum_a (r_a^2)^j+\sum_b (r_b^2)^j+\sum_e (r_e^2)^j+\mathscr{H}_P^j(\bigcup_a\ccc_{0,a}^2)\leq C(n,\alpha,\Lambda,\delta,\eta)(r_e^1)^j. 
\end{equation*}
Moreover, we have $\bar V_{Y_e^2,r_e^2}(1) \leq \bar V_{Y_e^1,r_e^1}(1)-\zeta/2 \leq\bar V_{X,1}(1)-\zeta$. 

We can now apply Proposition \ref{inductivedecomposition} again to $\pball(Y_e^2,r_e^2)$ to continue the decomposition. At each step, the corresponding quantity $\bar V_{\ast,\ast}(1)$ decreases strictly by $\zeta/2$. Thus, the process must terminate in $\frac{2\Lambda}{\zeta}+1$ steps, meaning that, after the last decomposition, no $e$-balls remain. We denote
$$S_0 := \bigcup_i S_0^i.$$
After reindexing the balls, we obtain the following decomposition:
\begin{equation} \label{eq:decom}
\pball(X, 1) \bigcap \MM \subset \bigcup_a \lc (\nnn_a\bigcup \ccc_{0,a} )\bigcap \pball(Y_a,r_a) \rc \bigcup_b \pball(Y_b,r_b)\bigcup S_0
\end{equation}
$S_0 \subset \MS^{j-1}$ and the content estimate:
\begin{equation*}
\sum_a r_a^j+\sum_b r_b^j+\mathscr{H}_P^j(\bigcup_a\ccc_{0,a})\leq C(n,\alpha,\Lambda,\delta,\eta);
\end{equation*}
Combined with Theorem \ref{neckstructurethm}, we have shown that (a), (b), (c), and (d) in Theorem \ref{neckdecomposition} hold. 

With \eqref{eq:decom}, to prove (e), it suffices to show that if $Y\in \MM \bigcap \nnn_a$ or $Y\in \MM \bigcap \pball(Y_b,r_b)$, then $Y\notin \mathcal{S}^j_{\epsilon}$, provided that $\eta$ and $\delta$ are sufficiently small.

\textbf{Case 1}: Suppose $Y\in \MM\bigcap\nnn_a$ and set $r=\|Y-\ccc_a\|$. It follows from Lemma \ref{lem:regu} that $Y$ is $(n,\ep, \chi r)$-cylindrical for $\chi=\chi(n, \ep)$, and hence $Y\notin \mathcal{S}^j_{\epsilon}$ if $\delta \le \delta(n, \alpha, \Lambda, \ep)$.

\textbf{Case 2}: If $Y\in \MM\bigcap\pball (Y_b, r_b)$, by the definition of $b$-balls, there exists $Y_b' \in \pball(Y_b, 4r_b)$ such that $Y'_b$ is $(j', \eta, r_b)$-cylindrical for $j'>j$. For any $\ep'>0$, there exists $\eta \le \eta(n, \ep')$ such that if $\|Y- L_{Y_b'} \| \le \ep' r_b$, then $Y$ is $(j', \Psi(\ep'), r_b)$-cylindrical, which implies $Y\notin \mathcal{S}^j_{\epsilon}$ if $\ep'$ is sufficiently small. Alternatively, if $\|Y- L_{Y_b'}\| \ge \ep' r_b$, we conclude that $Y$ is $(n, \Psi(\eta), \Psi(\eta) \ep' r_b)$-cylindrical, which also means $Y\notin \mathcal{S}^j_{\epsilon}$.

In sum, we have completed the proof of Theorem \ref{neckdecomposition}.

\section{Proof of Theorem \ref{volumeestimate1}}
In this section, we prove Theorem \ref{volumeestimate1}.

We first fix all constants. Given $\ep>0$, $\alpha>0$ and $\Lambda>0$, we choose small positive constants $\eta=\eta(n,\ep)$, $\delta=\delta(n, \alpha, \Lambda, \ep)$, $\zeta=\zeta(n, \alpha, \Lambda, \ep)$ and $\chi=\chi(n, \ep)$ such that the following properties hold:
\begin{enumerate} [label=(\roman*)]
\item If $\nnn_a=\pball(Y_a,2r_a)\setminus \pball_{r_{a,Y}}(\ccc_a)$ is a $(j,\delta,r_a)$-neck region, then any $Y \in \nnn_a \bigcap \MM$ is $(n, \ep, s)$-cylindrical for any $s \le \chi\|Y - \ccc_a\|$.

\item If $\pball(Y_b,r_b)$ is a $b$-ball, then any $Y \in \pball(Y_b,2r_b) \bigcap \MM$ is $(j', \ep, \chi r_b)$-cylindrical for some $j'>j$.

\item Theorem \ref{neckdecomposition} holds for the constants $\eta$, $\delta$ and $\zeta$.
\end{enumerate}

Indeed, (i) above follows from Lemma \ref{lem:regu}, and (ii) can be deduced similarly to Case 2 in the proof of Theorem \ref{neckdecomposition}.

Without loss of generality, we may assume $\tf(X)-16\zeta^{-2}>0$; otherwise, we can cover $\pball(X, 1)$ by $\{\pball(X_i, \zeta/4)\}$ and prove the conclusion for each ball individually. By Theorem \ref{neckdecomposition}, we have
\begin{equation} \label{eq:main1}
\pball(X, 2)\subset \bigcup_a \lc (\nnn_a\bigcup \ccc_{0,a}) \bigcap \pball(Y_a,r_a) \rc \bigcup_b \pball(Y_b,r_b)\bigcup S_0
\end{equation}
with the content estimate:
\begin{equation} \label{eq:main2}
\sum_a r_a^j+\sum_b r_b^j+\mathscr{H}_P^j(\bigcup_a\ccc_{0,a})\leq C(n,\alpha,\Lambda,\ep).
\end{equation}

Since $r<1$, it is clear that $\pball_r(\MS_{\epsilon,r}^j) \bigcap \pball(X, 1) \subset \pball_r \lc \MS_{\epsilon,r}^j \bigcap \pball(X, 2) \rc$. Based on the decomposition \eqref{eq:main1}, we divide the control of the volume into two steps.

\textbf{Step 1}: For any $Y \in \MS_{\epsilon,r}^j \bigcap \nnn_a \bigcap \pball(Y_a,r_a)$, it follows from (i) above that $\|Y- \ccc_a\| < 2\chi^{-1} r$; otherwise, $Y$ would be $(n, \ep, 2r)$-cylindrical, which contradicts $Y \in \MS_{\epsilon,r}^j$.

If $r_a \ge 2 \chi^{-1} r$, we set $W=\{Z \in \ccc_a \mid \|Y-Z\|=\|Y-\ccc_a\| \text{ for some } Y \in \MS_{\epsilon,r}^j \bigcap \nnn_a \bigcap \pball(Y_a,r_a)\}$. Define $s=10\chi^{-1} r$. Then it follows from Vitali's covering theorem that there exist $Z_i \in W$ for $1 \le i \le K_a$ such that $\{\pball(Z_i,s)\}_{1\leq i\leq K_a}$ covers $W$ and $\{\pball(Z_i,s/5)\}_{1\leq i\leq K_a}$ are pairwise disjoint. 

Denote the corresponding packing measure on $\nnn_a$ by $\mu_a$ (cf. \eqref{eq:pack}). Then, by Proposition \ref{prop:ahlfors}, we have
\begin{align*}
K_aC^{-1}(n)s^j\leq \sum_{i=1}^{K_a} \mu_a(\pball(Z_i,s/5)) \le \mu_a (\pball(Y_a,2r_a))\leq C(n)r_a^j.
\end{align*}
Note that the assumptions of Proposition \ref{prop:ahlfors} are satisfied, as $r_{a,Z_i} \le s/5 \le r_a$. Thus, we conclude that $K_a \le C(n, \ep) r_a^jr^{-j}$. In addition, we have 
\begin{align*}
\MS_{\epsilon,r}^j \bigcap \nnn_a \bigcap \pball(Y_a,r_a) \subset \bigcup_i \pball(Z_i,2s)
\end{align*}
and hence,
\begin{align*}
\vol \lc \pball_r \lc \MS_{\epsilon,r}^j \bigcap \nnn_a \bigcap \pball(Y_a,r_a) \rc \rc \leq\sum_{i=1}^{K_a}\vol(\pball(Z_i, 3s) )\leq C(n, \ep) K_a r^{n+3}\leq C(n, \ep) r_a^jr^{n+3-j}.
\end{align*}
Set
\begin{align*}
\MS':=\pball_r \lc \MS_{\epsilon,r}^j \bigcap \bigcup_{r_a \ge 2\chi^{-1}r}( \nnn_a \bigcap \pball(Y_a,r_a)) \rc.
\end{align*}
In sum, we obtain
\begin{equation}\label{sub1}
\vol (\MS') \leq C(n, \ep) \sum_{a}r_a^jr^{n+3-j} \le C(n, \alpha, \Lambda,\ep) r^{n+3-j}, 
\end{equation}
where we have used \eqref{eq:main2} for the final inequality.

\textbf{Step 2}: We take a cover $\{\pball(Y_i,r)\}_{1\leq i\leq K}$ of $\MS_{\epsilon,r}^j \setminus \MS'$ such that $Y_i \in \MS_{\epsilon,r}^j \setminus \MS'$ and $\{\pball(Y_i,r/2)\}$ are pairwise disjoint. By our definition, if $\pball(Y_i,r/2) \bigcap \nnn_a \bigcap \pball(Y_a,r_a) \ne \emptyset$, then $r_a \le 2 \chi^{-1} r$ and hence $\pball(Y_a,r_a) \subset \pball(Y_i, 3\chi^{-1}r)$. Moreover, if $\pball(Y_i,r/2) \bigcap \pball(Y_b,r_b) \ne \emptyset$, then we must have $r_b \le 2\chi^{-1}r$; otherwise, $Y_i \in \pball(Y_b, 2r_b)$, and by (ii) above, $Y_i$ would be $(j', \ep, \chi r_b)$-cylindrical for some $j'>j$, which contradicts $Y_i \in \MS_{\epsilon,r}^j$. Therefore, we also have $\pball(Y_b,r_b) \subset \pball(Y_i, 3\chi^{-1}r)$.

Thus, it follows from \eqref{eq:main1} that
\begin{align*}
\pball(Y_i,r/2) \subset & \bigcup_{\pball(Y_a,r_a)\subset \pball(Y_i,3\chi^{-1}r)}\pball(Y_a,r_a)\bigcup\bigcup_{\pball(Y_b,r_b)\subset \pball(Y_i,3\chi^{-1}r)}\pball(Y_b,r_b)\bigcup (S_0\bigcup\bigcup_a \ccc_{0,a}).
\end{align*}
By comparing volumes, we obtain
\begin{align*}
C(n)^{-1}r^{n+3} \leq & \vol(\pball(Y_i,r/2))\\
\leq &\sum_{\pball(Y_a,r_a)\subset \pball(Y_i,3\chi^{-1}r)}\mathrm{Vol}(\pball(Y_a,r_a))+\sum_{\pball(Y_b,r_b)\subset \pball(Y_i,3\chi^{-1}r)}\vol(\pball(Y_b,r_b))\\
&+\mathscr{H}_P^{n+3}(S_0\bigcup\bigcup_a \ccc_{0,a}) \\
\leq & C(n) \lc \sum_{\pball(Y_a,r_a)\subset \pball(Y_i,3\chi^{-1}r)}r_a^{n+3}+\sum_{\pball(Y_b,r_b)\subset \pball(Y_i,3\chi^{-1}r)}r_b^{n+3} \rc\\
\leq & C(n,\ep)r^{n+3-j} \lc \sum_{\pball(Y_a,r_a)\subset \pball(Y_i,3\chi^{-1}r)}r_a^{j}+\sum_{\pball(Y_b,r_b)\subset \pball(Y_i,3\chi^{-1}r)}r_b^{j} \rc,
\end{align*}
where we have used $r_a\leq 2\chi^{-1}r, r_b\leq 2\chi^{-1}r$ and $\mathscr{H}_P^{n+3}(S_0\bigcup\bigcup_a \ccc_{0,a})=0$.

Taking summation over $i$ and using \eqref{eq:main2}, we get
\begin{align*}
KC(n)^{-1}r^{n+3}\leq C(n,\ep)r^{n+3-j} \lc \sum_a r_a^j+\sum_b r_b^j \rc \leq C(n,\alpha, \Lambda, \ep)r^{n+3-j}.
\end{align*}
and hence $K\leq C(n,\alpha, \Lambda, \ep) r^{-j}$. Since all $\{\pball (Y_i, 2r)\}$ cover $\pball_r(\MS_{\epsilon,r}^j \setminus \MS')$, we obtain
\begin{align}\label{sub2}
\vol(\pball_r(\MS_{\epsilon,r}^j \setminus \MS')) \le \sum_{i=1}^K\vol(\pball(Y_i,2r))\leq C(n)Kr^{n+3}\leq C(n,\alpha, \Lambda, \ep)r^{n+3-j}.
\end{align}
Combining \eqref{sub1} and \eqref{sub2}, we obtain that for any $r \in (0,1)$,
\begin{equation} \label{eq:main1x}
\vol \lc \pball_r(\mathcal{S}^{j}_{\epsilon,r}) \bigcap \pball(X,1) \rc\leq C(n,\alpha, \Lambda, \ep) r^{n+3-j}
\end{equation}

Furthermore, since $\MS_{\epsilon}^j \subset \MS_{\epsilon,r}^j$, it follows from \eqref{eq:main1x} that
\begin{equation*}
\vol \lc \pball_r(\mathcal{S}^{j}_{\epsilon}) \bigcap \pball(X,1) \rc\leq C(n,\alpha, \Lambda, \ep) r^{n+3-j}
\end{equation*}
for any $r \in (0,1)$. From this, it is evident that
\begin{align*}
\mathscr H_P^j (\MS_\epsilon^j\bigcap \pball(X,1))\leq C(n,\alpha,\Lambda,\epsilon).
\end{align*}

In summary, this completes the proof of Theorem \ref{volumeestimate1}.

\bibliographystyle{alpha}
\bibliography{Volume}

\begin{thebibliography}{CMI16b}

\bibitem[And12]{andrews2012noncollapsing}
B.~Andrews.
\newblock Noncollapsing in mean-convex mean curvature flow.
\newblock {\em Geometry \& Topology}, 16(3):1413--1418, 2012.

\bibitem[BH16]{brendle2016mean}
S.~Brendle and G.~Huisken.
\newblock Mean curvature flow with surgery of mean convex surfaces in
  $\mathbb{R}^3$.
\newblock {\em Inventiones Mathematicae}, 203:615--654, 2016.

\bibitem[Bra78]{brakke2015motion}
K.~A. Brakke.
\newblock {\em The motion of a surface by its mean curvature.}
\newblock Princeton University Press, 1978.

\bibitem[CGG91]{chen1991uniqueness}
Y.-G. Chen, Y.~Giga, and S.~Goto.
\newblock Uniqueness and existence of viscosity solutions of generalized mean
  curvature flow equations.
\newblock {\em Journal of Differential Geometry}, 33(3):749--786, 1991.

\bibitem[CHN13]{cheeger2013quantitative}
J.~Cheeger, R.~Haslhofer, and A.~Naber.
\newblock Quantitative stratification and the regularity of mean curvature
  flow.
\newblock {\em Geometric and Functional Analysis}, 23(3):828--847, 2013.

\bibitem[CJN21]{Cheeger2018RectifiabilityOS}
J.~Cheeger, W.~Jiang, and A.~Naber.
\newblock Rectifiability of singular sets of noncollapsed limit spaces with
  {Ricci} curvature bounded below.
\newblock {\em Annals of Mathematics}, 193(2):407--538, 2021.

\bibitem[CMI12]{colding2012generic}
T.~H. Colding and W.~P. Minicozzi~II.
\newblock Generic mean curvature flow {I}; generic singularities.
\newblock {\em Annals of Mathematics}, 175(2):755--833, 2012.

\bibitem[CMI15]{colding2015uniqueness}
T.~H. Colding and W.~P. Minicozzi~II.
\newblock Uniqueness of blowups and {Ł}ojasiewicz inequalities.
\newblock {\em Annals of Mathematics}, 182(1):221--285, 2015.

\bibitem[CMI16a]{colding2016differentiability}
T.~H. Colding and W.~P. Minicozzi~II.
\newblock Differentiability of the arrival time.
\newblock {\em Communications on Pure and Applied Mathematics},
  69(12):2349--2363, 2016.

\bibitem[CMI16b]{colding2016singular}
T.~H. Colding and W.~P. Minicozzi~II.
\newblock The singular set of mean curvature flow with generic singularities.
\newblock {\em Inventiones Mathematicae}, 204(2):443--471, 2016.

\bibitem[CMI21]{colding2018wondering}
T.~H. Colding and W.~P. Minicozzi~II.
\newblock Wandering singularities.
\newblock {\em Journal of Differential Geometry}, 119(3), 2021.

\bibitem[CMI25]{colding2025quantitative}
T.~H. Colding and W.~P. Minicozzi~II.
\newblock Quantitative uniqueness for mean curvature flow.
\newblock {\em arXiv:2502.03634}, 2025.

\bibitem[CN13]{cheeger2013lower}
J.~Cheeger and A.~Naber.
\newblock Lower bounds on {R}icci curvature and quantitative behavior of
  singular sets.
\newblock {\em Inventiones Mathematicae}, 191(2):321--339, 2013.

\bibitem[Eck04]{ecker2012regularity}
K.~Ecker.
\newblock {\em Regularity theory for mean curvature flow}, volume~57.
\newblock Birkh{\"a}user, 2004.

\bibitem[ES91]{evans1991motion}
L.~C. Evans and J.~Spruck.
\newblock Motion of level sets by mean curvature {I}.
\newblock {\em Journal of Differential Geometry}, 33(3):635--681, 1991.

\bibitem[HK17a]{haslhofer2017mean1}
R.~Haslhofer and B.~Kleiner.
\newblock Mean curvature flow of mean convex hypersurfaces.
\newblock {\em Communications on Pure and Applied Mathematics}, 70(3):511--546,
  2017.

\bibitem[HK17b]{haslhofer2017mean2}
R.~Haslhofer and B.~Kleiner.
\newblock Mean curvature flow with surgery.
\newblock {\em Duke Mathematical Journal}, 166(9):1591--1626, 2017.

\bibitem[HS99a]{sinestrari1999convexity}
G.~Huisken and C.~Sinestrari.
\newblock Convexity estimates for mean curvature flow and singularities of mean
  convex surfaces.
\newblock {\em Acta Mathematica}, 183(1):45--70, 1999.

\bibitem[HS99b]{huisken1999mean}
G.~Huisken and C.~Sinestrari.
\newblock Mean curvature flow singularities for mean convex surfaces.
\newblock {\em Calculus of Variations and Partial Differential Equations},
  8(1):1--14, 1999.

\bibitem[Hui84]{huisken1984flow}
G.~Huisken.
\newblock Flow by mean curvature of convex surfaces into spheres.
\newblock {\em Journal of Differential Geometry}, 20(1):237--266, 1984.

\bibitem[Ilm94]{Ilmanen1994EllipticRA}
T.~Ilmanen.
\newblock Elliptic regularization and partial regularity for motion by mean
  curvature.
\newblock {\em Memoirs of the American Mathematical Society}, 108(520):1--90,
  1994.

\bibitem[Ilm95]{Il95}
T.~Ilmanen.
\newblock Singularities of mean curvature flow of surfaces.
\newblock {\em Preprint}, 1995.

\bibitem[JN21]{jiang20212}
W.~Jiang and A.~Naber.
\newblock ${L}^2$ curvature bounds on manifolds with bounded {R}icci curvature.
\newblock {\em Annals of Mathematics}, 193(1):107--222, 2021.

\bibitem[Sto94]{Sto94}
A.~Stone.
\newblock A density function and the structure of singularities of the mean
  curvature flow.
\newblock {\em Calculus of Variations and Partial Differential Equations},
  2:443--480, 1994.

\bibitem[SW09]{sheng2009singularity}
W.~Sheng and X.-J. Wang.
\newblock Singularity profile in the mean curvature flow.
\newblock {\em Methods and Applications of Analysis}, 16(2):139--156, 2009.

\bibitem[Whi97]{white1997stratification}
B.~White.
\newblock Stratification of minimal surfaces, mean curvature flows, and
  harmonic maps.
\newblock {\em Journal f{\"u}r die reine und angewandte Mathematik}, 485:1--36,
  1997.

\bibitem[Whi00]{white2000size}
B.~White.
\newblock The size of the singular set in mean curvature flow of mean-convex
  sets.
\newblock {\em Journal of the American Mathematical Society}, 13(3):665--695,
  2000.

\bibitem[Whi03]{white2003nature}
B.~White.
\newblock The nature of singularities in mean curvature flow of mean-convex
  sets.
\newblock {\em Journal of the American Mathematical Society}, 16(1):123--138,
  2003.

\bibitem[Whi05]{white2005local}
B.~White.
\newblock A local regularity theorem for mean curvature flow.
\newblock {\em Annals of Mathematics}, 161(3):1487--1519, 2005.

\bibitem[Whi15]{white2015subsequent}
B.~White.
\newblock Subsequent singularities in mean-convex mean curvature flow.
\newblock {\em Calculus of Variations and Partial Differential Equations},
  54(2):1457--1468, 2015.

\end{thebibliography}

\vskip10pt

Hanbing Fang, Mathematics Department, Stony Brook University, Stony Brook, NY 11794, United States; Email address: hanbing.fang@stonybrook.edu;\\

Yu Li, Institute of Geometry and Physics, University of Science and Technology of China, No. 96 Jinzhai Road, Hefei, Anhui Province, 230026, China; Hefei National Laboratory, No. 5099 West Wangjiang Road, Hefei, Anhui Province, 230088, China; E-mail: yuli21@ustc.edu.cn. \\

\end{document}